\documentclass[11pt]{amsart}
\usepackage{amsmath,amsthm,amssymb,mathrsfs}

\textwidth=134mm

\newtheorem{lemma}{Lemma}[section]

\newtheorem{proposition}[lemma]{Proposition}
\newtheorem{corollary}[lemma]{Corollary}
\newtheorem{theorem}[lemma]{Theorem}
\newtheorem{remark}[lemma]{Remark}
\newtheorem{example}[lemma]{Example}

\numberwithin{equation}{section}

\newenvironment{mlist}{\list{}{\listparindent 0pt
\itemsep 0pt \parsep  2pt \topsep 1pt
}}{\endlist}

\newcommand{\ad}{\operatorname{ad}}
\newcommand{\Ad}{\operatorname{Ad}}
\newcommand{\tr}{\operatorname{tr}}
\newcommand{\Ric}{\operatorname{Ric}}

\newcommand{\card}{\operatorname{card}}

\newcommand{\End}{\operatorname{End}}
\newcommand{\Id}{{\operatorname{Id}\kern.4pt}}
\newcommand{\rank}{\operatorname{rank}}

\newcommand{\eqdef}{\stackrel{\mathrm{def}}{=}}
\newcommand{\tith}{{\widetilde\theta}}
\newcommand{\tio}{{\widetilde\omega}}

\newcommand{\tw}{{*}}

\newcommand{\fa}{\mathfrak{a}}
\newcommand{\fb}{\mathfrak{b}}
\newcommand{\fg}{\mathfrak{g}}
\newcommand{\fh}{\mathfrak{h}}
\newcommand{\fk}{\mathfrak{k}}
\newcommand{\fm}{\mathfrak{m}}
\newcommand{\ft}{\mathfrak{t}}
\newcommand{\fn}{\mathfrak{n}}
\newcommand{\fz}{\mathfrak{z}}

\newcommand{\cK}{\mathcal{K}}
\newcommand{\cR}{\mathcal{R}}
\newcommand{\cS}{\mathcal{S}}

\newcommand{\rd}{\mathrm{d}}
\newcommand{\ri}{\mathrm{i}}
\newcommand{\ai}{{i}}

\newcommand{\bbR}{\mathbb{R}}

\newcommand{\bbC}{\mathbb{C}}

\newcommand{\bfg}{\mathbf{g}}

\newcommand{\sF}{\mathscr{F}}
\newcommand{\sH}{\mathscr{H}}
\newcommand{\sK}{\mathscr{K}}

\title[Ricci-flat K\"ahler structures]
{Invariant Ricci-flat K\"ahler metrics on tangent bundles
of compact symmetric spaces}

\thanks{Research supported by the Ministry of Economy, Industry
and Competitiveness, Spain, under Project MTM2016-77093-P.}

\author[P.~M. Gadea]{P.~M. Gadea}
\address{Instituto de F\'\i sica Fundamental, CSIC,
Serrano 113 bis, 28006-Madrid, Spain.}
\email{p.m.gadea@csic.es }
\author[J.~C.~Gonz{\'a}lez-D{\'a}vila]{J.~C.~Gonz{\'a}lez-D{\'a}vila}
\address{Departamento de Matem\'aticas, Estad\'istica e Investigaci\'on
Ope\-ra\-tiva, University of La Laguna, 38200 La Laguna, Tenerife, Spain.}
\email{jcgonza@ull.es}
\author[I.~V. Mykytyuk]{I.~V. Mykytyuk}
\address{Institute of Mathematics, Cracow University of Technology,
Warszawska 24, 31155, Cracow, Poland. \newline
\indent Institute of Applied Problems of Mathematics and Mechanics,
Naukova Str. 3b, 79601, Lviv, Ukraine.}
\email{mykytyuk{\_\,}i@yahoo.com}

\keywords{Invariant Ricci-flat K\"ahler structures, compact Riemannian
symmetric spaces, restricted roots}
\subjclass{53C30,
               53C35,
               53C55,
}

\begin{document}
\maketitle

\begin{abstract}
We give a description of  all
$G$-invariant Ricci-flat K\"ahler metrics on the canonical
complexification of any compact Riemannian symmetric space
$G/K$ of arbitrary rank, by using
some special local $(1,0)$ vector fields on
$T(G/K)$. As the simplest application, we obtain the
explicit description of the set of all complete
$\mathrm{SO}(3)$-invariant Ricci-flat K\"ahler metrics on
$T{\mathbb S}^2$, which includes the well-known
Eguchi-Hanson-Stenzel metrics and a new one-parameter family
of metrics.
\end{abstract}

\section{Introduction}
\label{s.1}
As it is well known, the existence of Ricci-flat K\"ahler metrics
on either compact or non-compact K\"ahler manifolds is very different.
Given a compact K\"ahler manifold whose first Chern class
is zero, by Yau's solution of Calabi's conjecture, there
is a unique Ricci-flat K\"ahler metric in the original K\"ahler class.
If the K\"ahler manifold
is not compact, the situation is completely different,
and it could in principle admit many of such metrics, even complete metrics.
There is  not for now a  general existence
theorem for Ricci-flat K\"ahler metrics on
non-compact K\"ahler manifolds.

Over the latest decades there has been considerable interest
in Ricci-flat K\"ahler metrics whose underlying manifold is
diffeomorphic to the tangent bundle
$T(G/K)$ of a rank-one compact Riemannian symmetric space
$G/K$. For instance, a remarkable class of Ricci-flat
K\"ahler manifolds of cohomo\-geneity one was discovered by
M.~Stenzel~\cite{St}. This has originated an extensive
series of papers. To cite but a few: M.~Cveti\v c,
G.\,W.~Gibbons, H.~L\"u and C.\,N.~Pope~\cite{CGLP} studied
certain harmonic forms on these manifolds and found an
explicit formula for the Stenzel metrics in terms of
hypergeometric functions. Earlier, T.\,C.~Lee~\cite{Le} gave
an explicit formula of the Stenzel metrics for classical spaces
$G/K$ but in another vein, using the approach of G.~Patrizio
and P.~Wong~\cite{PW}. J.\,M.~Baptista~\cite{Ba} used the
Stenzel metrics on $\mathrm{SL}(2,{\mathbb C})\cong T(\mathrm{SU}(2))$
for holomorphic quantization of the classical symmetries of
the metrics. A.\,S.~Dancer and I.\,A.\,B.~Strachan~\cite{DS}
gave a much more elementary and concrete treatment in the case that
$G/K$ is the round sphere ${\mathbb S}^n=\mathrm{SO}(n+1)/\mathrm{SO}(n)$,
exploiting the fact that the Stenzel metrics are of
cohomogeneity one with respect to the natural action of the Lie group
$G$ on $T(G/K)$. Remark also that in the case of the
standard sphere ${\mathbb S}^2$, the Stenzel metrics coincide with the
well-known Eguchi-Hanson metrics~\cite{EH}.

The natural question arises on a construction of
$G$-invariant Ricci-flat K\"ahler metrics (all metrics in
as many cases as possible) on the tangent bundles of compact
Riemannian symmetric spaces
$G/K$ of any rank or, equivalently, on the complexification
$G^{\mathbb C}/K^{\mathbb C}$ (for the latter spaces, see
G.\,D.\,Mostow~\cite{Mo1,Mo2}). The most general existence
theorems to date are due to H.~Azad and
R.~Kobayashi~\cite{AK} and R.~Bielawski~\cite{Bl}.
These results are non-constructive in nature and rely on
non-linear analysis. At this moment, explicit expressions
for such metrics have been found only when
$G/K$ is an Hermitian symmetric space (see O.~Biquard and
P.~Gauduchon~\cite{BG1,BG2}, where these metrics are
hyper-K\"ahlerian, thus automatically Ricci-flat). Note that
for the simplest case,
$G/K={\mathbb C}{\mathbf P}^n$,
there is also an explicit formula for these metrics by
E.~Calabi~\cite{Ca} giving the K\"ahler form of
$T(G/K)$ as the sum of the pull-back of the K\"ahler form on
${\mathbb C}{\mathbf P}^n$ and a term given by an explicit
potential.

In the present paper we obtain a
description, reached
in our main theorem (Theorem~\ref{th.5.1}) of such metrics
for compact Riemannian symmetric
spaces of any rank, as we outline with some more detail in
the next paragraph.

Let $G/K$ be a homogeneous manifold,
$G$ being a connected, compact Lie group. The tangent bundle
$T(G/K)$ has a canonical complex structure
$J^K_c$ coming from the $G$-equivariant diffeomorphism
$T(G/K) \to G^\bbC/K^\bbC$. The latter space is the
complexification of
$G/K$ mentioned above. Our approach is based on the explicit
algebraic description of some special local $(1,0)$ vector
fields defined on an open subset
of $T(G/K)$ (see Lemma \ref{le.3.5}). These vector fields
determine, for each $G$-invariant K\"ahler metric ${\mathbf g}$ on
$T(G/K)$ associated to $J^K_c$, a
$G$-{\it invariant\/} function
$\cS \colon T(G/K) \to \bbC$ so that the Ricci form
$\Ric({\mathbf g})$ of
${\mathbf g}$ can be expressed (Proposition \ref{pr.3.6}) as
$
 \Ric({\mathbf g}) = -\ri \,\partial\bar\partial \ln \cS.
$

Then, using the root theory of symmetric spaces,
we can describe, for $G/K$ being
any Riemannian symmetric space of compact type, all
$G$-invariant K\"ahler structures
$({\mathbf g}, J^K_c)$ which moreover are Ricci-flat on an
open dense subset
\linebreak
$T^+(G/K)$ of $T(G/K)$. Here,
$T^+(G/K)$ is the image of $G/H \times W^+$ under
a certain $G$-equivariant diffeomorphism, where
$W^+$ is some Weyl chamber and
$H$ denotes the centralizer of a (regular) element of
$W^+$ in $K$. Such $G$-invariant
K\"ahler and Ricci-flat K\"ahler structures are determined
uniquely by a vector-function ${\mathbf a}\colon W^+ \to \fg_H$
satisfying certain conditions (Theorem \ref{th.5.1}),
$\fg_H$ being the subalgebra of
$\Ad(H)$-fixed points of the Lie algebra of $G$.

We also give (in Section \ref{s.6}) its simplest
application; namely, we describe, in terms of our vector-functions
${\mathbf a}\colon \bbR^+ \to \mathfrak{so}(3)$, the set of all
$G$-invariant Ricci-flat K\"ahler metrics
$(\mathbf{g}, J_c^K)$ on the punctured tangent bundle
$T^+{\mathbb S}^2 = T{\mathbb S}^2 \backslash \{
\mathrm{zero\; section}\}$
of ${\mathbb S}^2=\mathrm{SO}(3)/\mathrm{SO}(2)$ and, among them, those that
extend to smooth complete metrics on the whole tangent
bundle. This family of
$\mathrm{SO}(3)$-invariant Ricci-flat K\"ahler metrics on
$T{\mathbb S}^2$ includes the well-known
$\partial\bar\partial$-exact Eguchi-Hanson
(hyper-K\"ahlerian) metrics~\cite{EH}
reopened by M.~Stenzel~\cite{St} and a new family of metrics
which are not $\partial\bar\partial$-exact.

In our next paper~\cite{GGM} we give an {\it explicit\/}
description, by using the technique of this article and the
main Theorem~\ref{th.5.1}, of the set of all
$G$-invariant Ricci-flat K\"ahler metrics
$(\mathbf{g}, J_c^K)$ on $T(G/K)$, where $G/K$
is a compact rank-one symmetric space. It is also shown that
this set contains a new family of metrics which are not
$\partial\bar\partial$-exact if
$G/K \in \{{\mathbb C}{\mathbf P}^n, n\geqslant 1\}$,
and coincides with the set of Stenzel metrics for any of the
latter spaces $G/K$.

\section{Preliminaries}
\label{s.2}

\subsection{Invariant polarizations}
\label{ss.2.1}

Let $\widetilde{M}$ be a smooth real manifold such that two real
Lie groups
$G$ and $K$ act on it and suppose that these actions commute
and the action of $K$ on
$\widetilde{M}$ is free and proper. Then the orbit space
$M=\widetilde{M}/K$ is a well-defined smooth manifold and
the projection mapping
$\tilde\pi \colon\widetilde{M}\to M$ is a principal
$K$-bundle. Since the actions of $G$ and $K$ on
$\widetilde{M}$ commute, there exists a unique action of
$G$ on $M$ such that the mapping $\tilde\pi$ is
$G$-equivariant.

Let $\sK\subset T\widetilde{M}$ be the kernel of the tangent map
$\tilde\pi_* \colon T \widetilde{M} \to TM$.  Then
$\sK$ is an involutive subbundle (of rank $\dim K$) of the tangent bundle
$T\widetilde{M}$. Since the actions of $G$ and $K$ commute,
the subbundle $\sK$ is $G$-invariant.

Suppose that $(M,\omega)$ is a (smooth) symplectic
manifold with a $G$-invariant symplectic structure $\omega$.
Let $J$ be a $G$-invariant almost complex
structure on $M$ and let
$F(J)\subset T^{\mathbb   C}M$ be its complex subbundle of
$(1,0)$-vectors, that is,
${\Gamma}F(J)=\{Y- \ri JY,Y\in {\Gamma}(TM)\}$.
The pair $(\omega,J)$ is a K\"ahler structure on
$M$ if and only if the subbundle $F(J)$ is a positive-definite
polarization, i.e.\ $(a)$ the complex subbundle $F(J)$ of rank
$\frac 12\dim_\bbR  M$
is involutive; $(b)$ $F(J) \cap \overline{F(J)} = 0$;
$(c)$ $\omega(F(J),F(J))=0$ (it is Lagrangian); and $(d)$
$\ri \omega_x(Z,\overline{Z})>0$ for all
$x\in M$, $Z\in F_x(J)\setminus\{0\}$~(see V.\,Guillemin
and S.\,Sternberg~\cite[Lemma 4.3]{GS}).

In this case, the $2$-form $\omega$ is invariant with respect to
the automorphism $J$ of the real tangent bundle $TM$
and the bilinear form ${\mathbf g}={\mathbf g}(\omega,J)$, where
${\mathbf g}(Y,Z)\stackrel{\mathrm{def}}{=}\omega(JY,Z)$,
for all vector fields $Y,Z$ on $M$,
is symmetric and positive-definite.
It is clear that each positive-definite polarization $F$ on $(M,\omega)$
determines the K\"ahler structure $(\bfg,\omega,J)$
with complex tensor $J$ such that $F=F(J)$
and ${\mathbf g}={\mathbf g}(\omega,J)$.

Since $F$ is an involutive subbundle of
$T^{\mathbb C} M$, it is determined by the differential ideal
${\mathcal I}(F)\subset\Lambda T^{{\mathbb C}*} M$
(closed with respect to exterior differentiation). Then
$\tilde\pi^*({\mathcal I}(F))\subset\Lambda T^{{\mathbb C}*}\widetilde{M}$
is also a differential ideal and, consequently, its kernel
$\sF$ is an involutive subbundle of
$T^{\mathbb C}\widetilde{M}$. We will denote $\sF$ also by
$\tilde\pi_*^{-1}(F)$. This subbundle is uniquely
determined by two conditions:
(1)~$\dim_\bbC \sF= \frac12\dim_\bbR  M+\dim
K=\frac12 ( \dim \widetilde{M} + \dim K )$;
(2)~$\tilde\pi_*(\sF)= F$.
It is evident that $(\tilde\pi^*\omega) (\sF,\sF)=0$ and the subbundle
$\sF$ contains $\sK$. Moreover, $\sF$ is
$K$-invariant.

We can substantially simplify matters by working on the
manifold $\widetilde{M}$ with the subbundle
$\sF$ rather than on the manifold $M$ with the polarization
$F$.

\begin{lemma}\label{le.2.1}
Let $\widetilde{M}$ be a manifold with two commuting actions
of the Lie groups
$G$ and $K$. Suppose that the action of $K$ on
$\widetilde{M}$ is free and proper and let
$\tilde\pi \colon \widetilde{M}\to  M$, where $ M=\widetilde{M}/K$,
be the corresponding $G$-equivariant projection. Let $\omega$ be a
$G$-invariant symplectic structure on $M$.
Let $\sF$ be a $G$-invariant involutive complex subbundle of
$T^{\mathbb C}\widetilde{M}$ such that
\begin{mlist}
\item
[$(1)$]  $\sF$ is $K$-invariant;
\item
[$(2)$]  $ \sK^{\bbC} =\sF\cap\overline{\sF};$
\item
[$(3)$]  $\dim_\bbC \sF = \frac12\dim_\bbR M+\dim K;$
\item
[$(4)$]  $(\tilde\pi^* \omega) (\sF,\sF)=0;$
\item
[$(5)$]
$\ri (\tilde\pi^* \omega)_{\tilde x}(Z,\overline{Z})>0$ for all
$\tilde x\in \widetilde{M}$, $Z\in \sF_{\tilde x}\setminus
\sK^{\bbC}_{\tilde x}$.
\end{mlist}

Then
$F=\tilde\pi_*(\sF)$ is a positive-definite polarization on
$( M,\omega)$, i.e., there exists a unique
K\"ahler structure $(\bfg,\omega,J)$ on $ M$
such that $F=F(J)$ and ${\mathbf g}={\mathbf g}(\omega,J)$.

Conversely, any positive-definite
$G$-invariant polarization $F$ on
$( M,\omega)$ determines a unique $G$-invariant
involutive subbundle $\sF=\tilde\pi_*^{-1}(F)$ with
properties {\em (1)--(5)}.
\end{lemma}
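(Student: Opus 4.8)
The plan is to exploit the tight dictionary between differential ideals on $\widetilde M$ and on $M$ afforded by the submersion $\tilde\pi$, so that each of the Guillemin–Sternberg conditions $(a)$–$(d)$ for $F$ on $M$ is read off from the corresponding hypothesis $(1)$–$(5)$ on $\sF$. First I would define $F \eqdef \tilde\pi_*(\sF)$ pointwise: for $\tilde x\in\widetilde M$ with $x=\tilde\pi(\tilde x)$, set $F_x = (\tilde\pi_*)_{\tilde x}(\sF_{\tilde x})\subset T^\bbC_x M$. One must check this is well defined (independent of the choice of $\tilde x$ in the fibre) and has constant rank: since $\sF$ is $K$-invariant by $(1)$ and the $K$-action is transitive on fibres with $\tilde\pi_*$ intertwining the actions, the image is independent of $\tilde x$; and by $(2)$ the kernel of $(\tilde\pi_*)_{\tilde x}|_{\sF_{\tilde x}}$ is $\sF_{\tilde x}\cap \sK^\bbC_{\tilde x}$. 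Here I would observe $\sK^\bbC\subset\sF$: indeed $\sK^\bbC = \sF\cap\overline{\sF}\subset\sF$ directly from $(2)$. Hence $\dim_\bbC F_x = \dim_\bbC\sF_{\tilde x} - \dim_\bbC\sK^\bbC_{\tilde x} = (\tfrac12\dim_\bbR M + \dim K) - \dim K = \tfrac12\dim_\bbR M$ by $(3)$, giving a subbundle of the right rank. $G$-invariance of $F$ follows from $G$-invariance of $\sF$ and $G$-equivariance of $\tilde\pi$.

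Next I would verify the four polarization conditions. Involutivity of $F$: given $G$-invariant local vector fields, one lifts local sections of $F$ to $\tilde\pi$-related sections of $\sF$ (possible since $\tilde\pi$ is a surjective submersion and $\sK^\bbC\subset\sF$), uses that $\tilde\pi_*$ maps $[\,\cdot\,,\cdot\,]$-brackets of $\tilde\pi$-related fields to brackets of their projections, and invokes involutivity of $\sF$; cleanly, one passes to the differential ideal $\mathcal I(\sF)$ and pushes it down, noting $\mathcal I(F) = (\tilde\pi^*)^{-1}\mathcal I(\sF)$ is closed under $\rd$ because $\tilde\pi^*$ commutes with $\rd$. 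Condition $(b)$, $F\cap\overline F = 0$: if $Z\in F_x$ and $Z=\overline W$ with $W\in F_x$, lift $Z,W$ to elements of $\sF_{\tilde x}$; then a representative lies in $\sF_{\tilde x}\cap\overline{\sF_{\tilde x}} = \sK^\bbC_{\tilde x}$ by $(2)$, hence projects to $0$, so $Z=0$. Condition $(c)$, $\omega(F,F)=0$: for $Z_1,Z_2\in F_x$ with lifts $\widetilde Z_1,\widetilde Z_2\in\sF_{\tilde x}$, we have $\omega_x(Z_1,Z_2) = (\tilde\pi^*\omega)_{\tilde x}(\widetilde Z_1,\widetilde Z_2) = 0$ by $(4)$. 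Condition $(d)$, positivity: for $0\neq Z\in F_x$ pick a lift $\widetilde Z\in\sF_{\tilde x}$; since $Z\neq 0$, $\widetilde Z\notin\sK^\bbC_{\tilde x}$, so by $(5)$ $\ri(\tilde\pi^*\omega)_{\tilde x}(\widetilde Z,\overline{\widetilde Z})>0$; but this equals $\ri\,\omega_x(Z,\overline Z)$ because $\sK^\bbC$ is $(\tilde\pi^*\omega)$-null (it lies in $\ker\tilde\pi_*$) so the value is insensitive to the choice of lift and descends. By the Guillemin–Sternberg criterion quoted in the text, $F$ is a positive-definite polarization, yielding a unique $G$-invariant K\"ahler structure $(\bfg,\omega,J)$ with $F=F(J)$, ${\mathbf g}={\mathbf g}(\omega,J)$; $G$-invariance of $J$ and $\bfg$ comes from that of $F$ and $\omega$.

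For the converse, given a positive-definite $G$-invariant polarization $F=F(J)$ on $(M,\omega)$, I would set $\sF \eqdef \tilde\pi_*^{-1}(F)$ as the kernel of $\tilde\pi^*\mathcal I(F)$, i.e.\ the subbundle characterized (as recalled in the excerpt just before the lemma) by $\tilde\pi_*(\sF)=F$ and $\dim_\bbC\sF = \tfrac12\dim_\bbR M + \dim K$, which automatically gives $(3)$ and $(4)$, contains $\sK^\bbC$, is involutive, and is $K$-invariant and (since $F$ and $\tilde\pi$ are $G$-equivariant) $G$-invariant, giving $(1)$. For $(2)$: $\sK^\bbC\subset\sF\cap\overline{\sF}$ since $\sK$ is real and $\sK^\bbC\subset\sF$; conversely $\sF\cap\overline{\sF}$ projects under $\tilde\pi_*$ into $F\cap\overline F = 0$, so $\sF\cap\overline{\sF}\subset\ker\tilde\pi_* = \sK^\bbC$, and a dimension count (both sides would otherwise force $\dim_\bbC\sF$ too large) pins down equality. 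Finally $(5)$ follows from $(d)$ for $F$ by the same lift-and-descend computation as above. Uniqueness of $\sF$ with properties $(1)$–$(5)$ is exactly the uniqueness in the characterization $(1)$–$(2)$ of $\tilde\pi_*^{-1}$ stated in the paragraph preceding the lemma.

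The main obstacle is the bookkeeping around $\sK^\bbC\subset\sF$ and the two dimension counts—making sure $(2)$ together with $(3)$ genuinely forces $\tilde\pi_*(\sF)$ to have the exact rank $\tfrac12\dim_\bbR M$ and, in the converse direction, that $\sF\cap\overline{\sF}$ cannot be strictly larger than $\sK^\bbC$. Once the rank arithmetic is nailed down, conditions $(a)$–$(d)$ transfer mechanically because $\tilde\pi^*$ intertwines $\rd$, pulls back $\omega$ to $\tilde\pi^*\omega$, and annihilates exactly $\sK^\bbC$, so every polarization condition on $M$ is literally the $\tilde\pi^*$-image of the corresponding hypothesis on $\widetilde M$.
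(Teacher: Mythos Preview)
Your proof is correct and follows essentially the same descent strategy as the paper: define $F=\tilde\pi_*(\sF)$, check well-definedness via $K$-invariance, obtain the rank from the dimension count in $(2)$--$(3)$, and then verify the Guillemin--Sternberg conditions $(a)$--$(d)$ by pulling back along $\tilde\pi$. The converse direction is also handled the same way.

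The one place where the paper is more explicit than your sketch is the \emph{smoothness} of $F$ (which you assert after the rank count but do not actually argue). The paper does this concretely: in local submersion coordinates $(x_1,\dots,x_N,x_{N+1},\dots,x_{\widetilde N})$ where $\tilde\pi$ is projection onto the first $N$ coordinates and $\sK$ is spanned by $\partial/\partial x_{N+1},\dots,\partial/\partial x_{\widetilde N}$, any local section $Y=\sum a_j\,\partial/\partial x_j$ of $\sF$ may be ``frozen'' along the fiber to $Y_0(x)=\sum a_j(x_1,\dots,x_N,0,\dots,0)\,\partial/\partial x_j$; since $\sF$ is preserved by the vertical flows (this is where $K$-invariance is used), $Y_0$ is again a section of $\sF$, and its pushforward is a manifestly smooth section of $F$. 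Involutiveness then drops out of the same formula. Your differential-ideal argument $\mathcal I(F)=(\tilde\pi^*)^{-1}\mathcal I(\sF)$ is a clean alternative for involutivity, but it presupposes that $F$ is already a smooth subbundle, so you should insert the smoothness argument (either via the local-section trick above or by invoking a local trivialization of $\tilde\pi$) before appealing to it.
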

\begin{proof}
The proof coincides up to some simple changes with that
of Mykytyuk~\cite[Lemma 3]{My}. Since
${\mathcal F}$ is $K$-invariant and the kernel
${\sK}$ of $\tilde\pi_*$ is contained in
${\mathcal F}$, the image $F=\tilde\pi_*({\mathcal F})$ of
${\mathcal F}$ is a well-defined subbundle of
$T^{\mathbb C}M$ of rank
$\frac12\dim_\bbR M$. We have $F\cap \overline F=0$ because
${\sK}^{\bbC}={\mathcal F}\cap \overline{\sF}$.
It then immediately follows from (4)
that the subbundle $F$ is Lagrangian.
To prove the smoothness and involutiveness of
$F$ we notice that $\tilde\pi$ is a submersion, i.e.\ for any point
$\tilde x\in \widetilde{M}$ there exists a convex neighborhood
$\widetilde U$ of ${\tilde x}$, coordinates
$x_1,\dotsc , x_{\widetilde N}$ on
$\widetilde U$ and coordinates
$x_1,\dotsc , x_{N}$ on the open subset
$U=\tilde\pi(\widetilde U)$ such that
$x_j({\tilde x})=0$,
$j=1, \dotsc ,\widetilde N$, and in these coordinates
$\tilde\pi|_{\widetilde U}$ is of type
$\tilde\pi \colon (x_1,x_2,\dotsc ,x_{\widetilde N})
\mapsto (x_1,x_2,\dotsc ,x_{N})$.
We can choose $\widetilde U$ such that
$(x_1,\dotsc,x_{N},0,\dotsc ,0)\in \widetilde U$
if $(x_1,\dotsc,x_{\widetilde N})\in \widetilde U$. Let
$Y(x_1,\dotsc ,x_{\widetilde N})= \sum _{j=1}^{\widetilde N}
a_{j}(x_1,\dotsc ,x_{\widetilde N})\partial
/\partial x_j $ be any section of
${\mathcal F}|_{\widetilde U}$. The subbundle
${\sK}$ is spanned on
$\widetilde U$ by $\partial /\partial x_j, j=N+1,\dotsc ,\widetilde N$,
and ${\mathcal F}|_{\widetilde U}$ is preserved by these
$\partial /\partial x_j $. Therefore, the smooth vector field
\begin{equation}\label{eq.2.1}
Y_0(x_1,\dotsc,x_{\widetilde N})=\sum_{j=1}^{\widetilde N}
a_{j}(x_1,\dotsc,x_{N},0, \dotsc ,0)\dfrac{\partial}{\partial x_j},
\end{equation}
is also a section of
${\mathcal F}|_{\widetilde U}$
(${\mathcal F}$ is preserved by $\partial /\partial x_j$
if and only if ${\mathcal F}$ is
preserved by the corresponding local one-parameter
group~\cite[Prob.\ 2.56,~p.\ 124]{GMM}).
Thus, $\tilde\pi_*\bigl(Y_0
(x_1, \dotsc ,x_{\widetilde N})\bigr)$ $=\sum_{j=1}^{N}a_{j}(x_1, \dotsc ,x_{N},0,
\dotsc ,0)\partial/\partial x_j$
is a smooth section of
$F|_{U} $. The involutiveness of $F$ follows
easily from~(\ref{eq.2.1}). Now, it follows from $(5)$
that $F$ is a positive-definite polarization.
\end{proof}

\subsection{Invariant Ricci-flat K\"ahler metrics}
\label{ss.2.2}

Let $G$ be a Lie group acting on the manifold $M$.
Let $J$ be some $G$-invariant complex structure
on $M$.  To substantially simplify matters
we will work on $M$
with the $G$-invariant K\"ahler (symplectic) form
$\omega$  rather than with the metric~$\bfg$:
$$
\bfg(X,Y)=\omega(JX,Y),\quad
\omega(X,Y)=-\bfg(JX,Y),\quad
\forall X,Y\in{\Gamma}(TM).
$$
Let $\dim M = 2n$ and
$z_1,\dotsc,z_n$ be some local complex coordinates
on
$(M, J)$. Then
$
\omega=\sum_{1\leqslant j,s\leqslant n}
w_{js}\rd z_j\land \rd\bar z_{s}.
$
In particular,
$w_{js}=\omega(\partial/\partial z_j, \partial/\partial \bar z_{s})$
and $w_{js}=-\overline{w_{s j}}$, that is, the matrix
$(\omega_{js})$ is skew-Hermitian. The Ricci form $\Ric(\bfg)$
(corresponding to the Ricci curvature) of the metric
$\bfg$ is the (global) form given in the local coordinates
$z_1,\dotsc,z_n$~(see \cite[Ch.\ IX, \S 5]{KN}) by
$
\Ric(\bfg)=-\ri\partial\bar\partial\ln\det(w_{j s}).
$
The right-hand side does not depend on the choice of local
complex coordinates. The Ricci form $\Ric(\bfg)$ is
$G$-invariant because so are
the complex structure $J$ and the form $\omega$.

Let $X_1, \dotsc, X_n$ be some linearly
independent $J$-holomorphic vector fields defined on some open
dense subset
$O$ of $M$. Using these holomorphic (possibly non-global)
vector fields $X_j$, $j=1, \dotsc, n$, we can calculate the function
$\det(w_{js})$ on the subset $O\subset M$
up to multiplications by some holomorphic and some
anti-holomorphic functions. Indeed, putting
$
W_{js}=\omega(X_j,\overline{X_{s}}),
$
we obtain that locally
$\det(W_{js})=\text{\sl h} \cdot\det(w_{js})
\cdot\overline{\text{\sl h}}$, where
$\text{\sl h}$ is some non-vanishing local holomorphic function
(specifically, some determinant). Thus
\begin{equation}\label{eq.2.2}
\Ric(\bfg)=- \ri \partial\bar\partial\ln\det(W_{js}).
\end{equation}

\section{The canonical complex structure on $T(G/K)$}
\label{s.3}

Consider a homogeneous manifold $G/K$, where $G$ is a
compact connected Lie group and $K$ is
some  closed  subgroup of $G$.
Let ${\mathfrak g}$ and
${\mathfrak k}$ be the Lie algebras of $G$ and $K$. There exists
a positive-definite $\Ad (G)$-invariant form
$\langle\cdot ,\cdot \rangle$ on ${\mathfrak g}$.

Denote by
${\mathfrak m}$ the
$\langle\cdot ,\cdot \rangle$-orthogonal complement to
${\mathfrak k}$ in ${\mathfrak g}$, that is,
$
{\mathfrak g}= {\mathfrak m} \oplus {\mathfrak k}
$
is the $\Ad(K)$-invariant vector space direct sum decomposition of
${\mathfrak g}$. Consider the trivial vector bundle
$G\times {\mathfrak m}$ with the two Lie group actions
(which commute) on it:
the left
$G$-action, $l_h \colon (g,w)\mapsto (hg,w)$ and the right
$K$-action $r_k \colon (g,w)\mapsto (gk,\Ad_{k^{-1}}w)$. Let
$$
\pi \colon G\times {\mathfrak m}\to G\times_K {\mathfrak m},
\quad (g,w)\mapsto [(g,w)],
$$
be the natural projection for this right
$K$-action. This projection is
$G$-equivariant. It is well known that
$G\times_K {\mathfrak m}$ and
$T(G/K)$ are diffeomorphic. The corresponding
$G$-equivariant diffeomorphism
\begin{equation}\label{eq.3.1}
\phi \colon G\times_K {\mathfrak m}\to T(G/K),
\quad
[(g,w)]\mapsto \frac{\rd}{\rd t}\Bigr|_0 g\exp(tw)K,
\end{equation}
and the projection $\pi$ determine
the $G$-equivariant submersion
$\Pi = \phi\circ\pi \colon G\times{\mathfrak m}\to T(G/K)$.
It is clear that
there exists a sufficiently small neighborhood
$O_\fm\subset{\mathfrak m}$ of zero in $\fm$
and an open subset $O\subset T(G/K)$ containing the
whole linear subspace $T_o(G/K)$, $o=\{K\}$,
such that the restriction of the map $\Pi$
\begin{equation}\label{eq.3.2}
\Pi|_{\exp(O_\fm)\times\fm}\colon \exp(O_\fm)\times\fm\to O
\end{equation}
is a diffeomorphism. We will use this special local section
$\exp(O_\fm)\times\fm\subset G\times{\mathfrak m}$
of the projection $\Pi$ in our
calculations below.

\begin{remark}
\em
In the case when we consider simultaneously
different homogeneous manifolds $G/K$ with the same
Lie group $G$ we will denote the mappings $\Pi,\pi$ and $\phi$
by $\Pi_K,\pi_K$ and $\phi_K$, respectively, and the $K$-orbit
$[(g,w)]$ of the element $(g,w)\in G\times \fm$  by $[(g,w)]_K$.
\end{remark}

Let $G^{\mathbb C}$ and $K^{\mathbb C}$ be the complexifications
of the Lie groups $G$ and $K$.
In particular, $K$ is a maximal compact subgroup of the Lie group
$K^{\mathbb C}$ and the intersection of $K$ with each connected
component of
$K^{\mathbb C}$ is not empty~(note that
$G^{\mathbb C}$, $K^{\mathbb C}$, $G$ and $K$ are algebraic groups).
Let $\fg^\mathbb{C}=\fg\oplus\ai\fg$ and $\fk^\mathbb{C}=\fk\oplus\ai\fk$
be the complexifications of the compact Lie algebras
$\fg$ and $\fk$.

We denote by
$G^{\mathbb R}$ and $K^{\mathbb R}$ the groups $G^{\mathbb C}$ and
$K^{\mathbb C}$, respectively, considered as real Lie groups.
Denote by $\xi_h^{\mathbf l}$, $\xi\in\fg$ (resp.\ $\xi_h^{\mathbf r}$)
the left (resp.\ right) $G^\bbC$-invariant (holomorphic) vector
fields on $G^\bbC$.
The natural (canonical) complex structure
$J_c$ on $G^{\bbR}=G^{\bbC}$ is defined by the right
$G^{\bbR}$-invariant $(1,0)$ vector fields
$\xi_h^{\mathbf r}=\xi^{r}- \ri (I \xi)^{r}$,
$\xi\in{\mathfrak g}$, where $I$ is a complex structure
(in particular, $I$ can be taken as the
multiplication by $\ai$)
on ${\mathfrak g}^{\mathbb C}$ and $\xi^{r}$ and $(I \xi)^{r}$ are
the right $G^{\mathbb R}$-invariant vector fields on the real Lie
group $G^\mathbb{R}$ obtained from $\xi$ and $I\xi$, respectively.
In turn, we denote by $\xi^l$ the corresponding left
$G^{\mathbb R}$-invariant
vector field on $G^{\mathbb R}$. Note here that, when
dealing with vector fields on the Lie group $G$,
we will use the same notation, i.e. $\xi^l$,
for the left $G$-invariant vector field on $G$
corresponding to $\xi\in\fg$.

Consider the complex homogeneous manifold
$G^\mathbb{C}/K^\mathbb{C}$ and the canonical holomorphic projection
$p_h \colon G^\mathbb{C}\to G^\mathbb{C}/K^\mathbb{C}$.
Since the vector field $\xi_h^{\mathbf r}$ on $G^\mathbb{C}$ is
right $K^\mathbb{C}$-invariant and $p_h$ is a holomorphic
submersion, its image $p_{h*}(\xi_h^{\mathbf r})$ is a well-defined
holomorphic vector field on the complex manifold
$G^\mathbb{C}/K^\mathbb{C}$.

Identifying $G^\mathbb{C}/K^\mathbb{C}$ naturally with the real
homogeneous manifold $G^\mathbb{R}/K^\mathbb{R}$ we obtain on
$G^\mathbb{R}/K^\mathbb{R}$ the canonical
left $G^\mathbb{R}$-invariant complex structure
$J^K_c$. This structure is defined by the global
$(1,0)$ vector fields
$p_{h*}(\xi_h^{\mathbf r})=p_*(\xi^{r})- \ri p_*(I \xi)^{r}$,
$\xi\in{\mathfrak g}$,
where $p$ is the canonical  projection
$p \colon G^\mathbb{R}\to G^\mathbb{R}/K^\mathbb{R}$.

Since
$G$ and $K$ are maximal compact Lie subgroups of
$G^{\mathbb C}$ and $K^{\mathbb C}$,
respectively, by a result of Mostow~\cite[Theorem~4]{Mo1}, we have that
$K^{\mathbb C}=K\exp(\ai  {\mathfrak k})$,
$G^{\mathbb C}=G\exp(\ai {\mathfrak m})\exp(\ai {\mathfrak k})$,
and the mappings
\begin{equation}
\label{eq.3.3}
\begin{split}
G\times {\mathfrak m}\times {\mathfrak k} &\to G^{\mathbb C}, \qquad
(g,w,\zeta) \mapsto g\exp(\ai w)\exp(\ai \zeta), \\
K\times {\mathfrak k} &\to K^{\mathbb C},  \qquad
(k,\zeta) \mapsto k\, \exp (\ai \zeta),
\end{split}
\end{equation}
are diffeomorphisms. Then the map
\begin{equation}\label{eq.3.4}
f^{\times}_K \colon G^{\mathbb C}/K^{\mathbb C}
\to G\times_K {\mathfrak m},
\qquad
g\exp(\ai w)\exp(\ai \zeta)K^{\mathbb C}\mapsto [(g,w)],
\end{equation}
is a $G$-equivariant diffeomorphism~\cite[Lemma~4.1]{Mo2}.
It is clear that
\begin{equation}\label{eq.3.5}
f_K \colon G^{\mathbb C}/K^{\mathbb C} \to T(G/K),
\qquad
g\exp( \ai w)\exp( \ai \zeta)K^{\mathbb C}\mapsto \Pi(g,w),
\end{equation}
is also a $G$-equivariant diffeomorphism. The diffeomorphism
$f_K$ supplies the manifold $T(G/K)$ with the
$G$-invariant complex structure $J_c^K$.
Moreover, this structure is determined by the set of global
holomorphic vector fields
$X_h^\xi$, $\xi\in\fg$, on
$T(G/K)$ which are images of the holomorphic vector fields
$p_*(\xi^{r})-  \ri p_*(I \xi)^{r}$,
under the tangent map $f_{K*}$.

\begin{lemma}\label{le.3.2}
Let $G/K$ be a homogeneous manifold,
where $G$ is a connected compact Lie group and $K$ is
a closed subgroup of $G$. Then for every $w\in \fm$
there exists a unique pair $(B_w^\fm,B_w^\fk)$
of ${\mathbb R}$-linear mappings
$B_w^\fm \colon {\mathfrak g}\to {\mathfrak m}$ and
$B_w^\fk \colon {\mathfrak g}\to {\mathfrak k}$ such that
\begin{equation}\label{eq.3.6}
\mathrm{Id} =\frac{\sin \ad_w}{\ad_w}\circ B_w^\fm
+(\cos \ad_w) \circ B_w^\fk
\quad\text{on the space}\quad\fg.
\end{equation}
The operator-functions
$$
B^\fm \colon \fm\to \End(\fg,\fm),\quad
w\mapsto B^\fm_w,\quad\text{and}\quad
B^\fk  \colon \fm\to \End(\fg,\fk),\quad
w\mapsto B^\fk_w,
$$
are smooth on $\fm$.
The global holomorphic vector field
$X_h^\xi$ on $T(G/K)=\Pi(G\times\fm)$, $\xi \in \fg$,
is the $\Pi_*$-image of the following global
vector field $X^\xi$ on $G\times\fm$,
\begin{equation}\label{eq.3.7}
X^\xi{(g,w)}{=}\left(
\Bigl(\xi'{-}\ri \frac{1{-}\cos \ad_w}{\ad_w}(B_w^\fm\xi')
{-}\ri \sin \ad_w (B_w^\fk\xi')\Bigr)^{l}(g),
{-}\ri (B_w^\fm\xi') \right),
\end{equation}
where $\xi'=\Ad_{g^{-1}}\xi$ and $T_{(g,w)}(G\times \fm)$
is identified naturally with the space
$T_g G\times\fm$.

If in addition the homogeneous manifold $G/K$ is a symmetric space,
we have the following exact solutions
of~{\em(\ref{eq.3.6}):}
\begin{equation}\label{eq.3.8}
B^\fm_w=\frac{\ad_w}{\sin \ad_w}\circ P_\fm,\qquad
B^\fk_w=\frac{1}{\cos \ad_w}\circ P_\fk,
\end{equation}
where $P_\fm \colon \fg\to\fm$ and $P_\fk \colon \fg\to\fk$ are the
natural projections determined by the splitting $\fg=\fm\oplus\fk$,
and therefore $X_h^\xi=\Pi_*\bigl(X^\xi\bigr)$, where
\begin{equation*}
X^\xi{(g,w)}=
\left(\Bigl(
\xi'- \ri\frac{1-\cos \ad_w}{\sin\ad_w}(P_\fm\xi')
-\ri\frac{\sin \ad_w}{\cos \ad_w} (P_\fk\xi')\Bigr)^{l}(g),
-\ri\frac{\ad_w}{\sin\ad_w}(P_\fm\xi')\right).
\end{equation*}
\end{lemma}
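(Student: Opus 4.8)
The plan is to establish the three assertions of the lemma in turn: (i)~the pointwise existence, uniqueness and smoothness of $(B^\fm_w,B^\fk_w)$; (ii)~that the field $X^\xi$ of~(\ref{eq.3.7}) satisfies $X_h^\xi=\Pi_*(X^\xi)$; and (iii)~the closed formulas~(\ref{eq.3.8}) in the symmetric case.

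For~(i), I would fix $w\in\fm$ and write $R=\frac{\sin\ad_w}{\ad_w}$ and $C=\cos\ad_w$ as endomorphisms of $\fg$. Then~(\ref{eq.3.6}) says precisely that the $\bbR$-linear map $\Phi_w\colon\fm\oplus\fk\to\fg$, $(u,v)\mapsto Ru+Cv$, is bijective, in which case $B^\fm_w$ and $B^\fk_w$ are forced to be the $\fm$- and $\fk$-components of $\Phi_w^{-1}$. Since $\dim\fm+\dim\fk=\dim\fg$, it is enough to see that $\Phi_w$ is injective. As $\langle\cdot,\cdot\rangle$ is $\Ad(G)$-invariant, $\ad_w$ is skew, so $\ad_w^2$ is symmetric and negative semidefinite; both $R$ and $C$ are convergent power series in $\ad_w^2$, hence commuting symmetric operators, acting on the $\ad_w^2$-eigenspace for the eigenvalue $-\mu^2$ by the scalars $\frac{\sinh\mu}{\mu}\ge1$ and $\cosh\mu\ge1$. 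In particular $C$ is invertible and $C^{-1}R$ is symmetric positive definite, so $\Phi_w(u,v)=0$ yields $v=-C^{-1}Ru$ and then, since $u\in\fm\perp\fk\ni v$, $0=\langle v,u\rangle=-\langle C^{-1}Ru,u\rangle$, forcing $u=v=0$. Smoothness of $B^\fm,B^\fk$ is then immediate, as $w\mapsto\Phi_w$ is real-analytic with values in the invertible operators and inversion is smooth.

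For~(ii), recall that $X_h^\xi=f_{K*}(p_{h*}\xi_h^{\mathbf r})$ by definition; using~(\ref{eq.3.5}) with $\zeta=0$ to factor $\Pi=f_K\circ q$, where $q=p_h\circ\hat\psi$ and $\hat\psi(g,w)=g\exp(\ai w)$ is the restriction to $\fk=0$ of the diffeomorphism $G\times\fm\times\fk\to G^{\mathbb C}$ of~(\ref{eq.3.3}), the claim $\Pi_*X^\xi=X_h^\xi$ becomes $q_*X^\xi=p_{h*}(\xi_h^{\mathbf r})$, for which it suffices to show that $(d\hat\psi)\bigl(X^\xi(g,w)\bigr)$ and $\xi_h^{\mathbf r}(a)$ agree modulo $\ker(dp_h)_a$ at $a=g\exp(\ai w)$. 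In the left trivialization of $TG^{\mathbb C}$ (so, modulo the subspace $\fk^\mathbb{C}$) the ``real part'' $\bigl((\xi')^l(g),0\bigr)$ of $X^\xi$, with $\xi'=\Ad_{g^{-1}}\xi$, equals $\bigl(\xi^r(g),0\bigr)$ and is $\hat\psi$-related to the right-invariant field $\xi^r$, whose trivialization at $a$ is $e^{-\ai\ad_w}\xi'$; the remaining part of $X^\xi$ I would push forward by the standard formula $d\exp_{\ai w}=(dL_{\exp\ai w})\circ\frac{1-e^{-\ai\ad_w}}{\ai\ad_w}$. Expanding $e^{-\ai\ad_w}=\cos\ad_w-\ai\sin\ad_w$ and $\frac{1-e^{-\ai\ad_w}}{\ad_w}=\frac{1-\cos\ad_w}{\ad_w}+\ai\frac{\sin\ad_w}{\ad_w}$, collecting the resulting terms and using $\sin^2\ad_w+\cos^2\ad_w=\Id$, the difference $(d\hat\psi)(X^\xi)-\xi_h^{\mathbf r}$ should collapse --- precisely via~(\ref{eq.3.6}) applied to $\xi'$ --- to a complex multiple of $B^\fk_w\xi'\in\fk\subset\fk^\mathbb{C}$, i.e.\ to a vertical vector, as needed. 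I expect this pushforward computation, together with the need to keep apart the imaginary unit $\ai$ of $\fg^\mathbb{C}=\fg\oplus\ai\fg$ and the unit $\ri$ making a vector field complex-valued, to be the only genuinely laborious step; its sole non-mechanical ingredient is the concluding appeal to~(\ref{eq.3.6}).

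For~(iii), in the symmetric case $[\fm,\fm]\subset\fk$ and $[\fk,\fm]\subset\fm$, so $\ad_w^2$ preserves both $\fm$ and $\fk$; hence the even operators $\frac{\sin\ad_w}{\ad_w}$ and $\cos\ad_w$ restrict, by~(i), to positive-definite --- in particular invertible --- operators on $\fm$ and on $\fk$ respectively. Thus $B^\fm_w=\frac{\ad_w}{\sin\ad_w}\circ P_\fm$ and $B^\fk_w=\frac{1}{\cos\ad_w}\circ P_\fk$ are well defined, and a direct check (the two factors in each composition are mutually inverse on $\fm$, resp.\ on $\fk$, and $P_\fm+P_\fk=\Id$) shows they satisfy~(\ref{eq.3.6}); by the uniqueness from~(i) they are the required maps. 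Substituting them into~(\ref{eq.3.7}), and using $\frac{1-\cos\ad_w}{\ad_w}\circ\frac{\ad_w}{\sin\ad_w}=\frac{1-\cos\ad_w}{\sin\ad_w}$ and $\sin\ad_w\circ\frac{1}{\cos\ad_w}=\frac{\sin\ad_w}{\cos\ad_w}$ on the relevant subspaces, gives the stated formula for $X^\xi$.
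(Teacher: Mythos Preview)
Your proof is correct, but it takes a different route from the paper's in an instructive way.

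For part~(i), the paper does \emph{not} prove the invertibility of $\Phi_w=(u,v)\mapsto\frac{\sin\ad_w}{\ad_w}u+\cos\ad_w\, v$ directly. Instead, it \emph{derives} equation~(\ref{eq.3.6}) as the real part of the tangent equation obtained by differentiating the Mostow decomposition~(\ref{eq.3.3}) along the curve $\exp(t\ai\xi)g\exp(\ai w)$; existence and uniqueness of $(B_w^\fm,B_w^\fk)$, and their smooth dependence on $w$, then follow simply because~(\ref{eq.3.3}) is a diffeomorphism, so its tangent map is an isomorphism varying smoothly with the base point. Your spectral argument (using that $\ad_w^2$ is negative semidefinite, so $R$ and $C$ act by $\frac{\sinh\mu}{\mu}\geqslant1$ and $\cosh\mu\geqslant1$, together with $\fm\perp\fk$) is a clean self-contained alternative that avoids invoking Mostow's theorem.

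For part~(ii), the paper works in the opposite direction from you: it pulls back the curves $\exp(t\varepsilon\xi)g\exp(\ai w)$ through the inverse of~(\ref{eq.3.3}) and reads off the $G$-, $\fm$- and $\fk$-components of their initial velocities; this yields~(\ref{eq.3.7}) and, as just noted, simultaneously produces~(\ref{eq.3.6}) as the defining equation for $v'(0),k'(0)$. Your forward pushforward via $d\exp_{\ai w}$ is the inverse of that same linear-algebra problem; when you expand $e^{-\ai\ad_w}$ and collect terms, the real $\fg$-part of $\hat\psi_*(X^\xi)-\xi_h^{\mathbf r}$ becomes $\sin\ad_w\bigl(\frac{\sin\ad_w}{\ad_w}u+\cos\ad_w\,v-\xi'\bigr)=0$ by~(\ref{eq.3.6}), and the $\ai\fg$-part reduces (again by~(\ref{eq.3.6})) to $-v=-B_w^\fk\xi'\in\fk$, so the difference is $\ri\ai\, B_w^\fk\xi'\in\ri\,\fk^{\mathbb C}$, which is indeed vertical for $p_h$. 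Thus your outline goes through exactly as you anticipated.

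Part~(iii) is handled the same way in both approaches. In summary: the paper treats (i) and (ii) as a single computation (the differential of Mostow's map), whereas you decouple them, gaining an elementary proof of~(i) at the cost of a separate verification for~(ii).
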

\begin{proof}
To prove the lemma we calculate
the components of
the image of the right $G^{\mathbb R}$-invariant vector field
$\xi^{r}- \ri (I \xi)^{r}$,
$\xi \in{\mathfrak g}$ on $G^\bbR$, under the
diffeomorphism
$$
G^{\mathbb C}\to G\times {\mathfrak m}\times {\mathfrak k},\quad
g\exp( \ai w)\exp( \ai \zeta)\mapsto (g,w,\zeta).
$$
To this end we consider two curves
$\exp(t\xi)g\exp( \ai w)$ and $\exp(t \ai \xi)g\exp( \ai w)$,
$t\in\mathbb{R}$,
in the group $G^\mathbb{C}=G^\mathbb{R}$ through the point $g\exp( \ai w)$
with tangent vectors
$\xi^{r}(g\exp( \ai w))$,
$(I \xi)^{r}(g\exp( \ai w))$, respectively
(here, $I\xi=\ai \xi$ for $\xi\in\fg$).
Using the diffeomorphism~(\ref{eq.3.3}) we obtain that
\begin{equation}\label{eq.3.9}
\exp(t\varepsilon \xi)g\exp( \ai w)=gg_\varepsilon(t)
\exp( \ai v_\varepsilon(t)) \exp( \ai k_\varepsilon(t)),
\end{equation}
where $\varepsilon\in\{1,\ai\}$ (hereafter
this means that either $\varepsilon = 1$ or
$\varepsilon = \ai$ all times in the formula),
$g_\varepsilon(0)=e$, $v_\varepsilon(0)=w$ and $k_\varepsilon(0)=0$
and $g_\varepsilon(t)$,
$v_\varepsilon(t)$ and $k_\varepsilon(t)$  are the (unique)
smooth curves in $G$, $\fm$  and $\fk$, respectively.
Hence, $X^\xi(g,w) \in T^\bbC_{(g,w)}(G \times \fm)
\cong T^\bbC_gG \times \fm^\bbC$ is given by
\begin{equation}
\label{eq.3.10}
X^\xi(g,w) = \big((g^\prime_1(0) - \ri g^\prime_\ai(0))^{l}(g), \,
v^\prime_1(0) - \ri v^\prime_\ai(0)\big).
\end{equation}
Moreover, transforming the curves~(\ref{eq.3.9}) to the curves
\begin{equation*}
\begin{split}
&\exp(- \ai w)g^{-1}\exp(t\varepsilon \xi)g\exp( \ai w)\\
&\qquad\quad =\bigl(\exp(- \ai w)g^{-1}gg_\varepsilon(t)\exp  \ai w)
\bigr)\bigl(\exp(- \ai w)\exp ( \ai v_\varepsilon(t))\bigr)
\exp( \ai k_\varepsilon(t))
\end{split}
\end{equation*}
through the identity in $G^\mathbb{C}$  and calculating their tangent
vectors at $e\in G^\mathbb{C}$, we obtain the following equation in
$\fg^\mathbb{C}=\fg\oplus\ai\fg$ for the tangent vectors
$g^\prime_\varepsilon(0)\in{\mathfrak g}$, $v^\prime_\varepsilon(0)
\in {\mathfrak m}$, $k^\prime_\varepsilon(0)\in {\mathfrak k}$:
\begin{equation}\label{eq.3.11}
\mathrm{e}^{- \ai \operatorname{ad} w}(\varepsilon\Ad_{g^{-1}}\xi)
=\mathrm{e}^{- \ai  \operatorname{ad} w }g^\prime_\varepsilon(0)+
\frac{1-\mathrm{e}^{-  \ai \operatorname{ad} w}}
{\ai \operatorname{ad}w}\ai v^\prime_\varepsilon(0) +
\ai k^\prime_\varepsilon(0),
\end{equation}
because $\displaystyle\left.\frac{\rd}{\rd t}\right|_0\exp(-X)\exp(X+tY)
=\frac{1-{\mathrm{e}}^{-\ad X}}{\ad X}(Y)$~(see  \cite[Ch.\ II, Theorem 1.7]{He}).
Since the map~(\ref{eq.3.3}) is a diffeomorphism, there exists
a unique solution $(g'(0),v'(0),k'(0)) = (g^\prime_\varepsilon(0),
v^\prime_\varepsilon(0), k^\prime_\varepsilon(0))\in\fg\times\fm\times\fk$
of Equation~(\ref{eq.3.11}) in $\fg^\mathbb{C}$,
for each $\varepsilon \in \{ 1, \ai\}$.
If $\varepsilon=1$, one directly gets that
$(g'(0),v'(0),k'(0))=(\Ad_{g^{-1}}\xi,0,0)$.
If $\varepsilon=  \ai $, we obtain one equation for~$\fg^\mathbb{C}$:
$$
 \ai \Ad_{g^{-1}}\xi=
g'(0)+
\frac{{\mathrm{e}}^{{ \ai \operatorname{ad}_w}}-1}
{\operatorname{ad}_w}(v'(0))+ \ai {\mathrm{e}}^{{ \ai
\operatorname{ad}_w}}k'(0),
$$
or two equations for $\fg$:
$\left\{\begin{array}{l}
\displaystyle
\Ad_{g^{-1}}\xi=\frac{\sin \ad_w}{\ad_w}v'(0)+\cos \ad_w k'(0),\\
\displaystyle
\hskip29pt 0=g'(0)+\frac{\cos \ad_w-1}{\ad_w}v'(0)-\sin \ad_w k'(0).
\end{array}\right.$ \\
As we remarked above, these equations possess a unique
solution. It is easy to see that the first of them defines
the operator-functions
$B^\fm \colon \fm\to \End (\fg,\fm)$, $w\mapsto B^\fm_w$, and
$B^\fk \colon \fm\to  \End  (\fg,\fk)$,
$w\mapsto B^\fk_w$, by $B^\fm_w(\Ad_{g^{-1}}\xi)=v'(0)$ and
$B^\fk_w(\Ad_{g^{-1}}\xi)=k'(0)$. Since the mapping
$G\times {\mathfrak m}\times {\mathfrak k} \to G^{\mathbb C}$
in~(\ref{eq.3.3}) is a diffeomorphism,
these operator-functions are smooth functions on $\fm$.

To complete the proof of the first part of our lemma, we
substitute  in \eqref{eq.3.10} the two
triples $(g^\prime_\varepsilon(0), v^\prime_\varepsilon(0),
k^\prime_\varepsilon(0))$ for $\varepsilon\in\{1, \ai \}$
calculated above.

To prove the second part of the lemma it is
sufficient to note that $\fg=\fm\oplus\fk$, $w\in\fm$, and
in the symmetric case the subspaces $\fm$  and $\fk$
are invariant subspaces of the operators $(\ad_w)^{2p}$, $p=0,1,2,...$
\end{proof}

\begin{remark}\label{re.3.3} \em
Since the map $\fm\to\mathrm{End}(\fg)$, $w\mapsto\ad_w$,
is $\Ad(K)$-equivariant, i.e.
$\Ad_k\circ \ad_w\circ\Ad_{k^{-1}}=\ad_{\Ad_k w}$, for all $k\in K$,
$\Ad(K)(\fm)=\fm$, $\Ad(K)(\fk)=\fk$, from the uniqueness
of the splitting~(\ref{eq.3.6}) we obtain the
$\Ad(K)$-equivariance of the maps $w\mapsto B^\fm_w$
and $w\mapsto B^\fk_w$:
\begin{equation}\label{eq.3.12}
\Ad_k\circ B^\fm_w\circ\Ad_{k^{-1}}=B^\fm_{\Ad_k w},
\qquad
\Ad_k\circ B^\fk_w\circ\Ad_{k^{-1}}=B^\fk_{\Ad_k w}.
\end{equation}
\end{remark}

\begin{remark}\label{re.3.4}
\em
Let $K_0\subset K$ be the identity component of the group $K$.
Then its complexification
$K_0^\mathbb{C}$ is also connected.
According to Stenzel~\cite[Lemma 2]{St}, there exists a
$G^{\mathbb C}$-invariant non-vanishing holomorphic
form $\Theta_h$ of maximal rank on the complex homogeneous space
$G^{\mathbb C}/K^{\mathbb C}_0$;
that is, the canonical bundle
$\Lambda^{(n,0)}(G^{\mathbb C}/K_0^{\mathbb C})$,
$n=\dim_\mathbb{C} G^{\mathbb C}/K_0^{\mathbb C}$,
is holomorphically trivial.
The existence of the form $\Theta_h$ relies on the fact that
as a group of transformations of $\fm^\mathbb{C}$ the group
$\Ad(K^{\mathbb C})|_{\fm^\mathbb{C}}$ is a subgroup of
the complex orthogonal
group $\mathrm{O}(\fm^\mathbb{C})$ but
$\Ad(K^{\mathbb C}_0)|_{\fm^\mathbb{C}}\subset \mathrm{SO}(\fm^\mathbb{C})$.
The $G$-equivariant diffeomorphism $f_{K_0}$ given in~\eqref{eq.3.5}
endowes the manifold $T(G/K_0)$ with the complex structure $J_c^{K_0}$
and a $G$-invariant nowhere-vanishing $J_c^{K_0}$-holomorphic $n$-form,
which we denote also by $\Theta_h$.
\end{remark}

Fix some orthonormal basis $\xi_1, \dotsc ,\xi_n$  (with respect to the
form $\langle\cdot,\cdot \rangle$) of the space $\fm$.
By definition, the holomorphic vector fields
$X_h^{\xi_1}, \dotsc ,X_h^{\xi_n}$ are linearly independent at the point
$\Pi(e,0)\in T(G/K)$ and therefore they are linearly independent on some
open dense subset of $T(G/K)$ (since the holomorphic vector fields
$p_{h*}((\xi_1)^{\mathbf r}_h),\dotsc,p_{h*}((\xi_n)^{\mathbf r}_h)$
are linearly independent at the point
$p_h(e)=\{K^\mathbb{C}\}\in G^\mathbb{C}/K^\mathbb{C}$).
However, these global $J_c^K$-holomorphic (specifically, $(1,0)$)
vector fields on $T(G/K)$ are not $G$-invariant.

We will construct now certain global
$G$-invariant smooth vector fields on
$G\times\fm$, which in turn determine certain local
$(1,0)$ vector fields on $T(G/K)$,
the latter having the important property that the form
$\Theta_h$ is a nonzero constant on them (see \eqref{eq.3.14}
below), whenever
$K$ is connected. To this end, we consider the special local
section
$\exp(O_\fm)\times{\mathfrak m}\subset G\times{\mathfrak m}$
of the projection $\Pi$ and the corresponding open subset
$O=\Pi(\exp(O_\fm)\times\fm)$ of
$T(G/K)$ (see~(\ref{eq.3.2})).

\begin{lemma}\label{le.3.5}
We retain the notation of {\em Lemma~\ref{le.3.2}}.
The (complex) vector fields
$Y^\xi$, $\xi\in\fg$, on the manifold
$G\times\fm$ defined by
\begin{equation}\label{eq.3.13}
\begin{split}
Y^\xi{(g,w)}
&=\left(
\Bigl(\frac{\cos \ad_w-1}{\ad_w\cos\ad_w}B^\fm_w(\sin \ad_w \xi)+
\frac{1}{\cos\ad_w}\xi\Bigr)^{l}(g), \, B^\fm_w(\sin \ad_w \xi)\right) \\
&\qquad\; - \ri \left(\Bigl(\frac{\cos \ad_w-1}{\ad_w\cos\ad_w}
B^\fm_w(\cos \ad_w \xi)
\Bigr)^{l}(g), \, B^\fm_w(\cos \ad_w \xi)\right),
\end{split}
\end{equation}
are smooth and $G$-invariant. The
vector fields $Y^\xi_{O}$ on the open subset $O\subset T(G/K)$,
$Y^\xi_{O}(\Pi(g,w))=\Pi_{*(g,w)}(Y^\xi(g,w))$,
$(g,w)\in \exp(O_\fm)\times{\mathfrak m}$, are smooth
$(1,0)$-vector fields. If the subgroup $K$ is connected
we have
\begin{equation}\label{eq.3.14}
\Theta_h(Y^{\xi_1}_O, \dotsc,Y^{\xi_n}_O)=\mathrm{const}\ne 0,
\end{equation}
where $\xi_1, \dotsc, \xi_n$ is the given orthonormal basis of $\fm$.

If, in addition, the homogeneous manifold
$G/K$ is a symmetric space
($K$ is not necessarily connected), then for each
$\xi\in\fm$ we have
\begin{equation}\label{eq.3.15}
Y^\xi{(g,w)}=
\left(\Bigl(
\frac{1}{\cos \ad_w}\xi- \ri \, \frac{\cos \ad_w-1}{\sin\ad_w}
\xi\Bigr)^{l}(g),
\,- \ri \, \frac{\ad_w\cos \ad_w}{\sin \ad_w} \xi\right).
\end{equation}
\end{lemma}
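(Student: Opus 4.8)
The plan is to recognize the vector field \eqref{eq.3.13} as the field $X^\xi$ of Lemma~\ref{le.3.2} evaluated on a suitable \emph{complex} parameter; throughout I identify the two imaginary units $\ai$ and $\ri$, so that $\fg^\bbC=\fg\oplus\ri\fg$. Smoothness and $G$-invariance I would dispatch first, straight from the formula. Smoothness: since $\fg$ is compact and $\langle\cdot,\cdot\rangle$ is $\Ad(G)$-invariant, each $\ad_w$ ($w\in\fm$) is skew-symmetric, hence has purely imaginary spectrum, so $\cos\ad_w$ is positive definite and invertible and $w\mapsto(\cos\ad_w)^{-1}$, $w\mapsto\sin\ad_w$, $w\mapsto\frac{\cos\ad_w-1}{\ad_w}$ are smooth operator-functions on $\fm$; together with the smoothness of $B^\fm$ (Lemma~\ref{le.3.2}) and of $(v,g)\mapsto v^l(g)$ this makes \eqref{eq.3.13} a smooth field. $G$-invariance: in \eqref{eq.3.13} the coefficient of $(\cdot)^l(g)$ and the $\fm$-component depend only on $w$ and $\xi$, so under $l_h\colon(g,w)\mapsto(hg,w)$ the left-invariant part $(\cdots)^l(g)$ becomes $(\cdots)^l(hg)$ and the $\fm$-component is unchanged; hence $(l_h)_*Y^\xi(g,w)=Y^\xi(hg,w)$.

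Next I would prove that $Y^\xi_O$ is of type $(1,0)$. Extend the linear expression in \eqref{eq.3.7} $\bbC$-linearly in its argument and write $X^{(\zeta)}(g,w)$ for the result, $\zeta\in\fg^\bbC$; then $X^{(\Ad_{g^{-1}}\xi)}(g,w)=X^\xi(g,w)$ for $\xi\in\fg$, so by Lemma~\ref{le.3.2} and $\bbC$-linearity $\Pi_{*(g,w)}\bigl(X^{(\zeta)}(g,w)\bigr)=X_h^{(\Ad_g\zeta)}(\Pi(g,w))$ is a $(1,0)$-vector for every $\zeta\in\fg^\bbC$, where $X_h^{(\cdot)}$ is the analogous $\bbC$-linear extension of $\xi\mapsto X_h^\xi$. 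The key identity to establish is
\[ Y^\xi(g,w)=X^{(e^{\ri\ad_w}\xi)}(g,w),\qquad e^{\ri\ad_w}\xi=\cos\ad_w\,\xi+\ri\sin\ad_w\,\xi . \]
Since $\sin\ad_w\,\xi-\ri\cos\ad_w\,\xi=-\ri\,e^{\ri\ad_w}\xi$, the $\fm$-components of both sides equal $-\ri B^\fm_w(e^{\ri\ad_w}\xi)$; for the $G$-components I would substitute $B^\fk_w=(\cos\ad_w)^{-1}\bigl(\mathrm{Id}-\frac{\sin\ad_w}{\ad_w}B^\fm_w\bigr)$ — which is \eqref{eq.3.6} — into \eqref{eq.3.7}, and after collecting the $B^\fm_w$-terms the required equality collapses to $(\cos\ad_w)^{-1}\xi=(\cos\ad_w)^{-1}e^{-\ri\ad_w}\bigl(e^{\ri\ad_w}\xi\bigr)$, which holds because $e^{-\ri\ad_w}$, $e^{\ri\ad_w}$, $(\cos\ad_w)^{-1}$ are commuting functions of $\ad_w$. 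Then $\Pi_{*(g,w)}(Y^\xi(g,w))=X_h^{(\Ad_g e^{\ri\ad_w}\xi)}(\Pi(g,w))\in F(J_c^K)$, so $Y^\xi_O$ is a smooth $(1,0)$-field on $O$.

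For \eqref{eq.3.14}, with $K$ connected we have $K=K_0$, and $\Theta_h$ is the $G^\bbC$-invariant nowhere-vanishing holomorphic $n$-form of Remark~\ref{re.3.4}; I would transfer the computation to $G^\bbC/K^\bbC$ via $f_K$ (see \eqref{eq.3.5}), so that $\Pi(g,w)$ corresponds to $hK^\bbC$ with $h:=g\exp(\ai w)$. From the definition of $X_h^\xi$ as $p_{h*}$ of the \emph{right} $G^\bbC$-invariant field $\xi_h^r$ one gets $a_*X_h^\xi=X_h^{\Ad_a\xi}$ for $a\in G^\bbC$, hence $(a^{-1})_*X_h^{(\zeta)}=X_h^{(\Ad_{a^{-1}}\zeta)}$ for $\zeta\in\fg^\bbC$. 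By the previous step $Y^{\xi_j}_O(\Pi(g,w))=X_h^{(\Ad_g e^{\ri\ad_w}\xi_j)}(hK^\bbC)$; using $h^*\Theta_h=\Theta_h$ to move the evaluation back to $eK^\bbC$, and $\Ad_{h^{-1}}=\Ad_{\exp(-\ai w)}\Ad_{g^{-1}}=e^{-\ri\ad_w}\Ad_{g^{-1}}$, I obtain
\[ (h^{-1})_*\,Y^{\xi_j}_O(\Pi(g,w))=X_h^{(e^{-\ri\ad_w}e^{\ri\ad_w}\xi_j)}(eK^\bbC)=X_h^{\xi_j}(eK^\bbC), \]
independent of $(g,w)$; thus $\Theta_h(Y^{\xi_1}_O,\dots,Y^{\xi_n}_O)=\Theta_h|_{eK^\bbC}\bigl(X_h^{\xi_1}(eK^\bbC),\dots,X_h^{\xi_n}(eK^\bbC)\bigr)$ is constant, and nonzero since $X_h^{\xi_1},\dots,X_h^{\xi_n}$ are linearly independent at $\Pi(e,0)$. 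Finally, for the symmetric case \eqref{eq.3.15}: $\ad_w$ interchanges $\fm$ and $\fk$, so for $\xi\in\fm$ one has $\cos\ad_w\,\xi\in\fm$, $\sin\ad_w\,\xi\in\fk$, and the explicit $B^\fm_w=\frac{\ad_w}{\sin\ad_w}\circ P_\fm$ of \eqref{eq.3.8} yields $B^\fm_w(\sin\ad_w\,\xi)=0$ and $B^\fm_w(\cos\ad_w\,\xi)=\frac{\ad_w\cos\ad_w}{\sin\ad_w}\xi$; plugging these into \eqref{eq.3.13} kills the first summand of the $(\cdot)^l(g)$-coefficient and turns $\frac{\cos\ad_w-1}{\ad_w\cos\ad_w}B^\fm_w(\cos\ad_w\,\xi)$ into $\frac{\cos\ad_w-1}{\sin\ad_w}\xi$, which is exactly \eqref{eq.3.15}.

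I expect the only laborious step to be the $G$-component identity in the second paragraph — elementary, but genuinely using \eqref{eq.3.6}. The conceptual crux is in the third paragraph: the twist $e^{\ri\ad_w}$ built into $Y^\xi$ exactly cancels the factor $\Ad_{\exp(\ai w)}$ that appears when the base point $g\exp(\ai w)K^\bbC$ is pushed back to $eK^\bbC$ — which is precisely what that twist was designed for.
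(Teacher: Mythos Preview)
Your proof is correct and takes a genuinely different route from the paper's. The paper \emph{derives} formula~\eqref{eq.3.13} by decomposing the left $G^\bbC$-invariant field $\xi_h^{\mathbf l}$ at the point $g\exp(\ai w)$ into a component tangent to the slice $G\exp(\ai\fm)$ and a component tangent to the fibre $g\exp(\ai w)\exp(\ai\fk)$; it does this by writing $g\exp(\ai w)\exp(t\varepsilon\xi)=g g_\varepsilon(t)\exp(\ai v_\varepsilon(t))\exp(\ai k_\varepsilon(t))$ and solving the resulting equation~(3.17) in $\fg^\bbC$ for $(g'_\varepsilon(0),v'_\varepsilon(0),k'_\varepsilon(0))$. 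Relation~\eqref{eq.3.14} then drops out because $p_h^*\Theta_h$ is constant on the left-invariant frame $(\xi_1)_h^{\mathbf l},\dots,(\xi_n)_h^{\mathbf l}$ and the fibre component is killed by $p_{h*}$. You instead take \eqref{eq.3.13} as given and verify the algebraic identity $Y^\xi=X^{(e^{\ri\ad_w}\xi)}$, tying $Y^\xi$ to the \emph{right}-invariant fields; your proof of \eqref{eq.3.14} translates the evaluation to $eK^\bbC$ via $G^\bbC$-invariance and observes that the twist $e^{\ri\ad_w}$ is precisely cancelled by $\Ad_{\exp(-\ai w)}$. The two arguments are dual through the relation $\xi_h^{\mathbf l}(h)=(\Ad_h\xi)_h^{\mathbf r}(h)$: the paper's curve computation constructs the field and makes the connection to $\Theta_h$ geometric, while your algebraic verification avoids solving~(3.17) and makes explicit \emph{why} the particular twist $e^{\ri\ad_w}$ is the right one.
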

\begin{proof}
Suppose that
$K$ is connected and consider the canonical holomorphic
projection $p_h \colon G^\mathbb{C}\to G^\mathbb{C}/K^\mathbb{C}$.
Since the form
$\Theta_h$ on $G^\mathbb{C}/K^\mathbb{C}$
is $G^\mathbb{C}$-invariant and holomorphic, its lift
$p_h^*(\Theta_h)$ is also a
$G^\mathbb{C}$-invariant and holomorphic form on
$G^\mathbb{C}$. It is clear that
$$
p_h^*(\Theta_h)\big((\xi_1)_h^{\mathbf l},\dotsc,
(\xi_n)_h^{\mathbf l}\big)=\mathrm{const}\ne0,
$$
where $(\xi_1)_h^{\mathbf l}, \dotsc, (\xi_n)_h^{\mathbf l}$ are
the left $G^{\mathbb C}$-invariant (global holomorphic) vector fields
on the complex Lie group $G^\mathbb{C}$ corresponding to
the vectors $\xi_1, \dotsc ,\xi_n\in\fm$.

But, as we remarked above, the group
$G^{\mathbb C}$ is diffeomorphic to
$G\times {\mathfrak m}\times {\mathfrak k}$,
$G^{\mathbb C}=G\exp( \ai {\mathfrak m})$ $\exp( \ai {\mathfrak k})$.
Therefore for any $g\in G$, $w\in\fm$, $\xi\in\fg$, we have
\[
\xi_{h}^{\mathbf l}(g\exp ( \ai w))=Y^\xi(g\exp ( \ai w))+A^\xi
(g\exp ( \ai w))
\in T^\mathbb{C}(G^\mathbb{R}),
\]
where
the first component is tangent to the (real) submanifold
$G\exp( \ai {\mathfrak m})\subset G^\mathbb{R}$
at $g\exp ( \ai w)$ and the second one is tangent to the submanifold
$g\exp ( \ai w)\exp( \ai {\mathfrak k})\subset G^\mathbb{R}$
at $g\exp ( \ai w)$. Since the image
$p_h(g\exp ( \ai w)$ $\exp( \ai {\mathfrak k}))$ is a one-point subset
$p_h(g\exp ( \ai w))$ of $G^\mathbb{C}/K^\mathbb{C}$,
$p_{h*}(A^\xi(g\exp  \ai w))$ $=0$ and, consequently,
$$
p_h^*(\Theta_h)(Y^{\xi_1},\dotsc,Y^{\xi_n})|_{G\exp  \ai \fm}
=p_h^*(\Theta_h)((\xi_1)_h^{\mathbf l},
\dotsc,(\xi_n)_h^{\mathbf l})|_{G\exp  \ai \fm} =
\mathrm{const}\ne0.
$$
Taking into account that the restriction
$(f_K\circ p_h)|_{\exp(O_\fm)\exp( \ai \fm)}$
of the map (submersion)
$f_K\circ p_h$ (see Definitions~(\ref{eq.3.2}),
(\ref{eq.3.4}) and~(\ref{eq.3.5})),
$$
G^{\mathbb C} \stackrel{p_h}{\to}
G^\mathbb{C}/K^\mathbb{C}\stackrel{f_K}{\to} T(G/K),
\
g\exp( \ai w)\exp( \ai \zeta)\stackrel{p_h}{\mapsto}
g\exp( \ai w)\exp( \ai \zeta)K^\mathbb{C}\stackrel{f_K}{\mapsto}
\Pi(g,w)
$$
is a diffeomorphism onto the open subset $O\subset T(G/K)$,
we obtain relation~(\ref{eq.3.14}) --- note that --- for the vector fields
$(f_K\circ p_h)_*\left(Y^{\xi}|_{\exp(O_\fm)\exp( \ai \fm)}\right)$.
So that now to complete the proof of the lemma
it is sufficient to show that the component $Y^\xi(g\exp ( \ai w))$ of the
left $G^{\mathbb C}$-invariant vector field $\xi_h^{l}$
coincides with the vector field $Y^\xi(g, w)$ in (\ref{eq.3.13})
under the natural identification
of the submanifold $G\exp ( \ai \fm)\subset G^\mathbb{R}$
with the manifold $G\times\fm$.
By such identification, the vector field $Y^\xi$
on $G\times\fm$ is $G$-invariant and smooth.

Since the left $G^{\mathbb C}$-invariant vector field $\xi_h^{\mathbf l}$,
for $\xi \in {\mathfrak g}$, is given by $\xi_h^{\mathbf l}=\xi^{l}-
\ri (I \xi)^{l}$, $\xi\in{\mathfrak g}$,
we have to calculate the components of the
$G^{\mathbb R}$-invariant vector fields
$\xi^{l}$ and $(I \xi)^{l}$, tangent to the submanifold
$G\exp( \ai \fm)$.

To this end it is sufficient to consider the two
curves $g\exp( \ai w)\exp(t\xi)$ and $g\exp( \ai w)\exp(t\ai \xi)$,
$t\in\mathbb{R}$, in the group $G^\mathbb{C}=G^\mathbb{R}$
through the point
$g\exp( \ai w)$ with tangent vectors
$\xi^{l}(g\exp( \ai w))$,
$(I \xi)^{l}(g\exp( \ai w))$,
respectively. By (\ref{eq.3.3}),
$$
g\exp( \ai w) \exp(t\varepsilon \xi)=gg_\varepsilon(t)
\exp( \ai v_\varepsilon(t)) \exp( \ai k_\varepsilon(t)),
$$
where $\varepsilon\in\{1, \ai \}$,
$g_\varepsilon(0)=e$, $v_\varepsilon(0)=w$, $k_\varepsilon(0)=0$
and $g_\varepsilon(t)$,
$v_\varepsilon(t)$ and $k_\varepsilon(t)$  are smooth curves in
$G$, $\fm$  and $\fk$, respectively. Hence,
$Y^\xi(g,w) \in T^\bbC_{(g,w)}(G \times {\mathfrak m})
\cong T^\bbC_gG \times {\mathfrak m}^\bbC$ is given by
\begin{equation}\label{eq.3.16}
Y^\xi(g,w) = \big( (g^\prime_1(0) - \ri g^\prime_\ai(0))^{l}(g),\,
v^\prime_1(0) - \ri v^\prime_\ai(0)\big).
\end{equation}
From the equation
\begin{equation*}
\exp(t\varepsilon \xi)
=\bigl(\exp(- \ai w)g_\varepsilon(t)\exp ( \ai w)\bigr)
\bigl(\exp(- \ai w)\exp ( \ai v_\varepsilon(t))\bigr)
\exp( \ai k_\varepsilon(t))
\end{equation*}
in $G^\mathbb{C}$ we obtain the following equation in
$\fg^\mathbb{C}=\fg\oplus\ai\fg$
for the tangent vectors
$g^\prime_\varepsilon(0)\in{\mathfrak g}, v^\prime_\varepsilon(0)
\in {\mathfrak m}, k^\prime_\varepsilon(0)\in {\mathfrak k}$:
\begin{equation}\label{eq.3.17}
\varepsilon\xi=\mathrm{e}^{- \ai \operatorname{ad} w}g^{\prime}_\varepsilon(0)+
\frac{1-\mathrm{e}^{- \ai \operatorname{ad} w}}
{ \ai \operatorname{ad} w} \ai v^\prime_\varepsilon(0)+
\ai k^\prime_\varepsilon(0).
\end{equation}
Since the map~(\ref{eq.3.3}) is a diffeomorphism, there exists
a unique solution $(g'(0)$, $v'(0),k'(0)) = (g^\prime_\varepsilon(0)$,
$v^{\prime}_\varepsilon(0), k^{\prime}_\varepsilon(0))$
$\in\fg\times\fm\times\fk$
of Equation~(\ref{eq.3.17}) in $\fg^\mathbb{C}$.

If $\varepsilon=1$ we obtain one equation for $\fg^\mathbb{C}$:
$$
\xi={\mathrm{e}}^{- \ai \ad w}g'(0)+
\frac{1-{\mathrm{e}}^{{- \ai \operatorname{ad} w}}}
{\operatorname{ad} w}v'(0)+ \ai k'(0),
$$
or two equations for $\fg$:
\[
\xi=\cos \ad_w g'(0)+\frac{1-\cos \ad_w}{\ad_w}v'(0),
\
0=-\sin \ad_w g'(0)+\frac{\sin \ad_w}{\ad_w}v'(0)+k'(0),
\]
which are equivalent to the pair of equations
$$
\sin \ad_w \xi=\frac{\sin \ad_w}{\ad_w}v'(0)+\cos \ad_w k'(0),
\
g'(0)=\frac{\cos \ad_w-1}{\ad_w\cos\ad_w}v'(0)+
\frac{1}{\cos\ad_w}\xi.
$$
Thus,
\begin{equation*}
\begin{split}
&
v^\prime_1(0)\!=\!B^\fm_w(\sin \ad_w\! \xi),\;\,
k^\prime_1(0)\!=\!B^\fk_w(\sin \ad_w\! \xi), \;\,
\\ \noalign{\smallskip}
&\,
g^\prime_1(0)\!=\!\frac{\cos \ad_w-1}{\ad_w\!\cos\ad_w}
B^\fm_w(\sin \ad_w\! \xi)\!+\!
\frac{1}{\cos\ad_w}\xi,
\end{split}
\end{equation*}
where the operator-functions
$B^\fm \colon \fm\to \End (\fg,\fm)$ and
$B^\fk \colon \fm\to \End  (\fg,\fk)$,
$w\mapsto B^\fk_w$, are determined in Lemma~\ref{le.3.2}.
Similarly,
\begin{equation*}
v^\prime_\ai(0)=B^\fm_w(\cos \ad_w \xi),\
k^\prime_\ai(0)=B^\fk_w(\cos \ad_w \xi),\
g^\prime_\ai(0)=\frac{\cos \ad_w-1}{\ad_w\cos\ad_w}B^\fm_w(\cos \ad_w \xi).
\end{equation*}
Now, substituting the two triples
$(g^\prime_\varepsilon(0), v^\prime_\varepsilon(0),
k^\prime_\varepsilon(0))$,
for $\varepsilon\in\{1, \ai \}$, in \eqref{eq.3.16}
we obtain expressions \eqref{eq.3.13}. Note here that
because the mappings $w\mapsto \ad_w$, $w\mapsto B^\fm_w$, and
$w\mapsto B^\fk_w$ ($w\in\fm$), are
$\Ad(K)$-equivariant (see Remark~\ref{re.3.3}
and~(\ref{eq.3.12})), for any
$w\in\fm$, $\xi\in\fg$,
$k\in K$, it follows that
\begin{equation}\label{eq.3.18}
\text{if}\
Y^\xi(e,w)=(\eta,v)\in\fg\times\fm
\ \text{then}\
Y^{\Ad_k \xi}(e,\Ad_k w)=(\Ad_k \eta,\Ad_k v).
\end{equation}

To prove the last part of the lemma it is
sufficient to note that
in the symmetric case the subspaces $\fm$  and $\fk$
are invariant subspaces of the operators $(\ad_w)^{2p}$, $p=0,1,2,...$
and use~(\ref{eq.3.8}).
\end{proof}

Given a $G$-invariant  K\"ahler structure $(\bfg, \omega, J^K_c)$
on $T(G/K)$, where $J^K_c$ is its canonical complex structure, it follows
from~(\ref{eq.2.2}) that the Ricci form $\Ric(\bfg)$ is given by
$
\Ric(\bfg)=- \ri\,  \partial\bar\partial\ln
\det\Bigl(\omega\bigl(X_h^{\xi_j},
\overline{X_h^{\xi_{s}}}\bigr)\Bigr).
$
Since the vector fields
$X_h^{\xi_j}$  in~(\ref{eq.3.7}) are not
$G$-invariant, the calculation of the function
$\det(\omega(X_h^{\xi_j}, \overline{X_h^{\xi_{s}}}))$
is not simple. To substantially simplify this calculation we
will prove that this function is equal, up to a non-zero complex factor, to
$\cS\cdot \text{\sl h}$, where $\cS$ is some global
$G$-invariant function and $\text{\sl h}$ is locally expressed
as $\text{\sl h} =h\cdot\overline{h}$ for
some local $J^K_c$-holomorphic function $h$ on $T(G/K)$.
Specifically, we obtain the following result.

\begin{proposition}\label{pr.3.6}
Let $G$ be a compact Lie group. Let $\bfg$ be a
$G$-invariant K\"ahler metric on $T(G/K)$ associated with
the canonical complex structure $J^K_c$  and let $\omega$ be
its fundamental form.
Then the function $\widetilde \cS\colon G
\times \fm \to {\mathbb C}$ given by
\begin{equation}\label{eq.3.19}
\widetilde \cS(g,w) = \det \Bigl((\Pi^\ast\omega)_{(g,w)}\big( Y^{\xi_j},
\overline{Y^{\xi_{s}}} \big)\Bigr),
\end{equation}
is left $G$-invariant and right $K$-invariant and therefore
determines a unique $G$-invariant function
$\cS \colon T(G/K)\to\mathbb{C}$ such that $\widetilde \cS=\Pi^* \cS$.
We have
\begin{equation}
\label{eq.3.20}
\Ric(\bfg)=- \ri\,  \partial\bar\partial\ln \cS.
\end{equation}
\end{proposition}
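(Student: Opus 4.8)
The plan is to prove the two invariance properties of $\widetilde\cS$ first (which produce $\cS$), and then to deduce \eqref{eq.3.20} by relating, on the open set $O$ of \eqref{eq.3.2}, the local $(1,0)$-fields $Y^{\xi_j}_{O}$ to the global holomorphic fields $X_{h}^{\xi_j}$, along the lines sketched just before the statement.

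\emph{Invariance of $\widetilde\cS$.} Since $\Pi$ is $G$-equivariant, $\omega$ is $G$-invariant, and the fields $Y^{\xi}$ on $G\times\fm$ are left $G$-invariant (Lemma~\ref{le.3.5}), the pullback $\Pi^{*}\omega$ is left $G$-invariant; hence so is $(g,w)\mapsto(\Pi^{*}\omega)_{(g,w)}\bigl(Y^{\xi_j},\overline{Y^{\xi_s}}\bigr)$ and its determinant $\widetilde\cS$. In particular $\widetilde\cS(g,w)=\widetilde\cS(e,w)$, so it suffices to show $w\mapsto\widetilde\cS(e,w)$ is $\Ad(K)$-invariant. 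Fix $k\in K$ and let $\Psi_{k}\colon G\times\fm\to G\times\fm$, $(g,w)\mapsto(kgk^{-1},\Ad_{k}w)$; its differential at $(e,w)$ is $(\eta,v)\mapsto(\Ad_{k}\eta,\Ad_{k}v)$ on $\fg\times\fm$, which by \eqref{eq.3.18} carries $Y^{\xi}(e,w)$ to $Y^{\Ad_{k}\xi}(e,\Ad_{k}w)$. Moreover $\Psi_{k}=r_{k^{-1}}\circ l_{k}$, so since $\Pi$ is constant on right $K$-orbits, $\Pi\circ\Psi_{k}=\Pi\circ l_{k}$, which by $G$-equivariance of $\Pi$ intertwines with the $k$-action on $T(G/K)$; as $\omega$ is $G$-invariant this gives $\Psi_{k}^{*}(\Pi^{*}\omega)=\Pi^{*}\omega$. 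Writing $\Ad_{k}\xi_{i}=\sum_{p}a_{pi}\xi_{p}$, where $A=(a_{pi})$ is real orthogonal because $\langle\cdot,\cdot\rangle$ is $\Ad(K)$-invariant and $\xi_{1},\dots,\xi_{n}$ is orthonormal, and using linearity of $\xi\mapsto Y^{\xi}$, the two facts give
\[
(\Pi^{*}\omega)_{(e,w)}\bigl(Y^{\xi_i},\overline{Y^{\xi_l}}\bigr)
=\sum_{p,q}a_{pi}\,a_{ql}\,(\Pi^{*}\omega)_{(e,\Ad_{k}w)}\bigl(Y^{\xi_p},\overline{Y^{\xi_q}}\bigr),
\]
i.e.\ the matrix at $(e,w)$ is $A^{\!\top}$ times the matrix at $(e,\Ad_{k}w)$ times $A$. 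Taking determinants and using $(\det A)^{2}=1$ gives $\widetilde\cS(e,w)=\widetilde\cS(e,\Ad_{k}w)$; replacing $k$ by $k^{-1}$ and combining with left $G$-invariance yields $\widetilde\cS\circ r_{k}=\widetilde\cS$. A left $G$-invariant and right $K$-invariant function on $G\times\fm$ descends through the $G$-equivariant submersion $\Pi$ to a unique $G$-invariant $\cS\colon T(G/K)\to\bbC$ with $\Pi^{*}\cS=\widetilde\cS$; note that on $O$ one has $\cS=\det\bigl(\omega(Y^{\xi_j}_{O},\overline{Y^{\xi_s}_{O}})\bigr)$.

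\emph{The Ricci form.} Assume first that $K$ is connected. By Lemma~\ref{le.3.5} the $Y^{\xi_1}_{O},\dots,Y^{\xi_n}_{O}$ are sections of the $(1,0)$-subbundle $F=F(J^{K}_{c})$ over $O$, and by \eqref{eq.3.14} (with $\Theta_{h}$ nowhere zero) they are linearly independent, hence a frame of $F|_{O}$. Since $\omega(F,F)=0$ and $\omega$ is nondegenerate, the pairing $F\times\overline{F}\to\bbC$ induced by $\omega$ is nondegenerate, so $\cS=\det\bigl(\omega(Y^{\xi_j}_{O},\overline{Y^{\xi_s}_{O}})\bigr)$ vanishes nowhere on $O$, hence nowhere on $T(G/K)=G\cdot O$. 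Let $U\subset O$ be the open dense set where the global holomorphic fields $X_{h}^{\xi_1},\dots,X_{h}^{\xi_n}$ are also linearly independent; on $U$ write $Y^{\xi_j}_{O}=\sum_{s}c_{sj}X_{h}^{\xi_s}$ with $C=(c_{sj})$, so that $\bigl(\omega(Y^{\xi_j}_{O},\overline{Y^{\xi_s}_{O}})\bigr)=C^{\!\top}\bigl(\omega(X_{h}^{\xi_p},\overline{X_{h}^{\xi_q}})\bigr)\overline C$ and therefore $\cS=|\det C|^{2}\det\bigl(\omega(X_{h}^{\xi_p},\overline{X_{h}^{\xi_q}})\bigr)$ on $U$. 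Contracting the holomorphic $n$-form $\Theta_{h}$ of Remark~\ref{re.3.4}: the function $\psi\eqdef\Theta_{h}(X_{h}^{\xi_1},\dots,X_{h}^{\xi_n})$ is holomorphic and nonzero on $U$, whereas $\Theta_{h}(Y^{\xi_1}_{O},\dots,Y^{\xi_n}_{O})=(\det C)\,\psi$ is a nonzero constant $c_{0}$ by \eqref{eq.3.14}; hence $\det C=c_{0}/\psi$ is holomorphic and nonvanishing on $U$, and
\[
\det\bigl(\omega(X_{h}^{\xi_p},\overline{X_{h}^{\xi_q}})\bigr)=\frac{\psi\,\overline{\psi}}{|c_{0}|^{2}}\,\cS\qquad\text{on }U.
\]
Taking $\ln$ and applying $\partial\bar\partial$ annihilates the constant summand $-\ln|c_{0}|^{2}$, the holomorphic summand $\ln\psi$, and the anti-holomorphic summand $\ln\overline{\psi}$, leaving $\partial\bar\partial\ln\det\bigl(\omega(X_{h}^{\xi_p},\overline{X_{h}^{\xi_q}})\bigr)=\partial\bar\partial\ln\cS$ on $U$; together with \eqref{eq.2.2} this is $\Ric(\bfg)=-\ri\,\partial\bar\partial\ln\cS$ on $U$. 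Both sides are smooth $G$-invariant $2$-forms on $T(G/K)$ and $\bigcup_{g\in G}g\cdot U$ is open dense in $T(G/K)$, so the identity holds everywhere. If $K$ is not connected, pass to the finite covering $\tau\colon T(G/K_{0})\to T(G/K)$: here $\fm$ and the fields $X^{\xi},Y^{\xi}$ are unchanged, $\Pi_{K}=\tau\circ\Pi_{K_{0}}$, and $\bfg$, $\cS$, $\Ric(\bfg)$ pull back under $\tau$ to the corresponding objects for $G/K_{0}$, for which \eqref{eq.3.20} was just proved; since $\tau$ is a local diffeomorphism, \eqref{eq.3.20} follows for $G/K$.

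\emph{Where the work is.} The only step that is not bookkeeping is the holomorphy of $\det C$. A priori the change-of-frame matrix $C$ from the holomorphic frame $\{X_{h}^{\xi_s}\}$ to the merely smooth $(1,0)$-frame $\{Y^{\xi_s}_{O}\}$ has only smooth entries; it is precisely the special property \eqref{eq.3.14} --- that $\Theta_{h}$ is \emph{constant} on $Y^{\xi_1}_{O},\dots,Y^{\xi_n}_{O}$ --- that forces $\det C=c_{0}/\psi$ to be holomorphic, so that $\det\bigl(\omega(X_{h}^{\xi_p},\overline{X_{h}^{\xi_q}})\bigr)$ differs from $\cS$ only by a factor of the form $h\,\bar h$ with $h$ holomorphic, and is therefore invisible to $-\ri\,\partial\bar\partial\ln$.
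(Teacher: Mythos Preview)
Your proof is correct and follows essentially the same route as the paper's. The invariance argument is the same (using \eqref{eq.3.18} together with orthogonality of $\Ad_k|_\fm$), and the Ricci-form step rests on the same key fact \eqref{eq.3.14}. The only cosmetic difference is packaging: the paper introduces an auxiliary $G$-invariant function $\cS_1$ via the volume comparison $\omega^n=\cS_1\cdot\varepsilon_n\,\Theta_h\wedge\overline{\Theta_h}$ and then evaluates $\omega^n$ on the two frames $\{X_h^{\xi_j}\}$ and $\{Y_O^{\xi_j}\}$ separately, whereas you go directly through the change-of-frame matrix $C$ and show $\det C=c_0/\psi$ is holomorphic. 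These are the same computation (your $\psi$ is the paper's $h_1$, and your $|\det C|^{-2}$ plays the role of the paper's $|h_1|^2$ up to a constant), so the two arguments are equivalent; your formulation is perhaps slightly more direct in that it avoids naming $\cS_1$. The treatment of non-connected $K$ via the covering $T(G/K_0)\to T(G/K)$ is also the same.
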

\begin{proof}
The function $\widetilde \cS$ is
$G$-invariant because so are the vector fields
$\{Y^{\xi_j}\}$. Let us show that moreover
$\widetilde \cS$ is right $K$-invariant or, equivalently,
\begin{equation}\label{eq.3.21}
\det \Bigl((\Pi^\ast\omega)_{(e,w)}\big( Y^{\xi_j},
\overline{Y^{\xi_{s}}} \big)\Bigr) = \det \Bigl((\Pi^\ast\omega)_{(e,
\,\Ad_kw)}\big( Y^{\xi_j}, \overline{Y^{\xi_{s}}} \big)\Bigr),
\end{equation}
for all $k \in K$.
Indeed, since the form $\omega$ is $G$-invariant and the
projection $\Pi \colon G \times \fm \to T(G/K)$
is an equivariant submersion with respect to the natural left
$G$-actions on $G \times \fm$ and
$T(G/K)$, it follows that $\Pi^*\omega$ is a left
$G$-invariant and right $K$-invariant form on
$G \times \fm$. Therefore, for any $g\in G$,
$\xi_1, \xi_2 \in \fg$, $u_1,u_2 \in \fm$,
$k \in K$, we have
\begin{equation}\label{eq.3.22}
\begin{split}
(\Pi^*\omega)_{(e,w)}&\bigl((\xi_1,u_1),(\xi_2,u_2)\bigr)
=(\Pi^*\omega)_{(g,w)}\bigl((\xi_1^l(g),u_1),(\xi_2^l(g),u_2)\bigr)
\\ \noalign{\smallskip}
=&(\Pi^*\omega)_{(e,\,\Ad_k w)}\bigl(
(\Ad_k \xi_1,\Ad_k u_1),(\Ad_k \xi_2,\Ad_k u_2)\bigr)
\end{split}
\end{equation}
because
$(\exp t\xi\cdot k^{-1},\, \Ad_k(w+tv))
=(k^{-1}\cdot\exp t(\Ad_k\xi),\, \Ad_k w+t\Ad_kv))$.

Putting $Y^{\xi_j}(e,w)=(\eta_{j},v_j)\in
T^\mathbb{C}_eG\times T^\mathbb{C}_w\fm
=\fg^\mathbb{C}\times\fm^\mathbb{C}$,
we obtain that
\begin{equation}\label{eq.3.23}
\begin{split}
(\Pi^*\omega)_{(e,w)}&\bigl(Y^{\xi_j}, \overline{Y^{\xi_{s}}}\bigr)
\eqdef(\Pi^*\omega)_{(e,w)}((\eta_{j},v_j),\overline{(\eta_s,v_s)})\\
&\stackrel{\mathrm{~(\ref{eq.3.22})}}{=}
(\Pi^*\omega)_{(e,\Ad_k w)}((\Ad_k\eta_{j},\Ad_k v_j),
\overline{(\Ad_k\eta_s,\Ad_k v_s)})\\
&\stackrel{\mathrm{~(\ref{eq.3.18})}}{=}
(\Pi^*\omega)_{(e,\Ad_k w)}\bigl(Y^{\Ad_k \xi_j},
\overline{Y^{\Ad_k \xi_s}}\bigr).
\end{split}
\end{equation}

Since by \eqref{eq.3.13} the map
$\fm\to \fg^\mathbb{C}\times\fm^\mathbb{C}$,
$\xi\mapsto Y^\xi(e,w)$, is linear and the endomorphism
$\Ad_k \colon \fm\to\fm$ is orthogonal ($\det\Ad_k=\pm 1$), we obtain
\begin{equation*}
\begin{split}
\det & \Bigl((\Pi^*\omega)_{(e,w)}\bigl(Y^{\xi_j},
\overline{Y^{\xi_{s}}}\bigr)\Bigr) \!\!
\stackrel{\mathrm{~(\ref{eq.3.23})}}{=}
\det\Bigl((\Pi^*\omega)_{(e,\,\Ad_k w)}\bigl(Y^{\Ad_k \xi_j},
\overline{Y^{\Ad_k \xi_s}}\bigr)\Bigr) \\
&=\det(\Ad_k)^t\cdot \! \det(\Ad_k) \!\cdot \!
\det\Bigl(\!(\Pi^*\omega)_{(e,\,\Ad_k w)}\bigl(Y^{\xi_j},
\overline{Y^{\xi_{s}}}\bigr)\!\Bigr) \\
&=\det\Bigl(\!(\Pi^*\omega)_{(e,\,\Ad_k w)}\bigl(Y^{\xi_j},
\overline{Y^{\xi_{s}}}\bigr)\!\Bigr).
\end{split}
\end{equation*}
This proves \eqref{eq.3.21}. Now it is easy to see that the
function $\cS$ with $\widetilde \cS=\Pi^* \cS$ is well defined.
This function is smooth and $G$-invariant because the
mapping $\Pi$ is a $G$-equivariant submersion.

In order to prove \eqref{eq.3.20}, suppose first that the subgroup
$K\subset G$ is connected.
Consider the holomorphic form $\Theta_h$ on $T(G/K)$.
Then  there exists a constant
$\varepsilon_n\in{\mathbb C}$ such that
$\varepsilon_n\Theta_h\land\overline\Theta_h$ is a volume form
compatible with the orientation defined by the symplectic
structure $\omega$ on $T(G/K)$. Thus there is a positive, real
analytic function $\cS_1$ on $T(G/K)$ such that
\begin{equation}\label{eq.3.24}
\omega^n=\cS_1\cdot\varepsilon_n\Theta_h\land\overline{\Theta_h}.
\end{equation}
The function $\cS_1$ is $G$-invariant because
so are the forms $\omega$ and $\Theta_h$.

On the other hand, putting
$(e_1,\dotsc ,e_{2n}) = \bigl(X_h^{\xi_1},\dotsc ,X_h^{\xi_n},
\overline{X_h^{\xi_1}},\dotsc,\overline{X_h^{\xi_n}}\bigr)
$ and using the fact that $\omega$, considered as a
complex form, is of degree $(1,1)$, we
obtain, in particular,
$
\omega\bigl(X_h^{\xi_j}, X_h^{\xi_{s}}\bigr)=0,
\
\omega\bigl(\overline{X_h^{\xi_j}}, \overline{X_h^{\xi_{s}}}\bigr)=0.
$
Hence, we can deduce
from equation~\eqref{eq.3.24} that
for some local holomorphic function $h_1$ on $T(G/K)$,
\begin{equation}\label{eq.3.25}
\begin{split}
\cS_1\cdot & \varepsilon_n |h_1|^2
=\omega^n(X_h^{\xi_1},\dots,X_h^{\xi_n},
\overline{X_h^{\xi_1}},\dotsc,\overline{X_h^{\xi_n}}) \\
&\stackrel{\mathrm{def}}{=} 2^{-n}
\sum_{\sigma\in S_{2n}}\varepsilon(\sigma)
\omega(e_{\sigma(1)},e_{\sigma(2)})\cdots
\omega(e_{\sigma(2n-1)},e_{\sigma(2n)})\\
&\stackrel{\mathrm{*}}{=}\sum_{\sigma\in S_{n}}\varepsilon(\sigma)
\omega(X_h^{\xi_1},\overline{X_h^{\xi_{\sigma(1)}}})\cdots
\omega(X_h^{\xi_n},\overline{X_h^{\xi_{\sigma(n)}}})
\stackrel{\mathrm{*}}{=}
\det\Bigl(\omega\bigl(X_h^{\xi_j},
\overline{X_h^{\xi_{s}}}\bigr)\Bigr).
\end{split}
\end{equation}
Here $\stackrel{\mathrm{*}}{=}$
means ``equal up to a non-zero constant complex factor."

Similarly, by Lemma~\ref{le.3.5} and since
$
\omega\bigl(Y_O^{\xi_j}, Y_O^{\xi_{s}}\bigr)=0,
\ \omega\bigl(\overline{Y_O^{\xi_j}},
\overline{Y_O^{\xi_{s}}}\bigr)=0,
$
we have from~\eqref{eq.3.24} and \eqref{eq.3.14} that $\cS_1$
is locally expressed as
\begin{equation*}
\cS_1\stackrel{\mathrm{*}}{=}
\omega^n(Y_O^{\xi_1}, \dotsc,Y_O^{\xi_n},
\overline{Y_O^{\xi_1}},\dotsc,\overline{Y_O^{\xi_n}})
\stackrel{\mathrm{*}}{=}
\det\Bigl(\omega\bigl(Y_O^{\xi_j},
\overline{Y_O^{\xi_{s}}}\bigr)\Bigr).
\end{equation*}
Then, using Lemma~\ref{le.3.5} again we obtain that locally
$$
\Pi^* \cS_1
\stackrel{\mathrm{*}}{=}
\Pi^*\biggl(
\det\Bigl(\omega\bigl(Y_O^{\xi_j},
\overline{Y_O^{\xi_{s}}}\bigr)\Bigr)\biggr)
=
\det\Bigl((\Pi^*\omega)\bigl(Y^{\xi_j},
\overline{Y^{\xi_{s}}}\bigr)\Bigr)
=\widetilde \cS=\Pi^* \cS.
$$
Hence, using that $\cS_1$ is $G$-invariant, $G\cdot O=T(G/K)$,
we obtain that
$\cS(x)\stackrel{\mathrm{*}}{=}\cS_1(x)$
and from~(\ref{eq.3.25}) we have
$$
\det\Bigl(\omega\bigl(X^{\xi_j}_h,
\overline{X^{\xi_{s}}_h}\bigr)\Bigr) = \cS \cdot c\,\text{\sl h},
$$
where $c \in {\mathbb C}\backslash \{0\}$ and
$\text{\sl h}$ is a function on $T(G/K)$ such that locally
$\text{\sl h} =|h_1|^2$. Thus \eqref{eq.3.20} holds.

Finally, suppose that $K\ne K_0$, that is,
$K$ is not connected.

Since below we will consider the manifolds
$T(G/K_0)$ and $T(G/K)$ simultaneously,
we will use the notation introduced above for
objects on $T(G/K)$ but with indexes
$K_0$ and $K$ respectively (if they exist).
To complete the proof of the lemma,
consider the natural covering map
$
\Psi \colon G^\mathbb{C}/K_0^\mathbb{C}\to G^\mathbb{C}/K^\mathbb{C}.
$
This map is holomorphic and $G^\mathbb{C}$-equivariant.
There exists a unique map
$\psi \colon T(G/K_0)\to T(G/K)$ such that
the following diagram is commutative:
\begin{equation*}
\begin{array}{rrl}
G^\mathbb{C}/K_0^\mathbb{C}&\stackrel{\Psi}
\to&G^\mathbb{C}/K^\mathbb{C}\\ \noalign{\smallskip}
{\downarrow\makebox[16pt]{$\scriptstyle{f^{\times}_{K_0}}$}}
&&{\makebox[10pt]{$\scriptstyle{f^{\times}_K}$}\downarrow} \\
\noalign{\smallskip}
G\times_{K_0}\fm&& G\times_{K}\fm\\ \noalign{\smallskip}
{\downarrow\makebox[16pt]{$\scriptstyle{\phi_{K_0}}$}}
&&{\makebox[10pt]{$\scriptstyle{\phi_K}$}\downarrow} \\
\noalign{\smallskip} T(G/K_0)&
\stackrel{\psi}\to&T(G/K)
\end{array}
\begin{array}{rrl}
g\exp( \ai w)\exp( \ai \zeta) K^\mathbb{C}_0&
\stackrel{\Psi}\to&g\exp( \ai w)\exp( \ai \zeta) K^\mathbb{C} \\
\noalign{\smallskip}
{\downarrow\makebox[18pt]{$\scriptstyle{f^{\times}_{K_0}}$}}
&&{\makebox[10pt]{$\scriptstyle{f^{\times}_K}$}\downarrow} \\
\noalign{\smallskip}
[g,w]_{K_0}&&[g,w]_{K} \\ \noalign{\smallskip}
{\downarrow\makebox[18pt]{$\scriptstyle{\phi_{K_0}}$}}
&&{\makebox[10pt]{$\scriptstyle{\phi_K}$}\downarrow} \\
\noalign{\smallskip}
\phi_{K_0}([g,w]_{K_0})&
\stackrel{\psi}\to&\phi_K([g,w]_{K})
\end{array}.
\end{equation*}
By definition, the map $\psi$ is holomorphic and
$G$-equivariant. Here the maps (diffeomorphisms) $\phi_K$, $\phi_{K_0}$,
and $f^{\times}_K$, $f^{\times}_{K_0}$ are defined
by~(\ref{eq.3.1}) and~(\ref{eq.3.4}).

Since the global vector fields
$Y^\xi$, $\xi\in\fm$, in~(\ref{eq.3.13}) on $G\times\fm$
are determined only in terms of the pair of Lie algebras
$(\fg,\fk)$, the function
$\widetilde \cS$ in~(\ref{eq.3.19}) is the same for the spaces
$T(G/K)$ and $T(G/K_0)$, that is,
$\widetilde \cS=\Pi_K^* \cS_{K}= \Pi_{K_0}^* \cS_{K_0}$.
Thus $\cS_{K_0}=\psi^* \cS_K$.

Now, the map
$\psi$ is a local holomorphic diffeomorphism, hence
$\partial\bar\partial\ln \cS_{K_0}=
\psi^*(\partial\bar\partial\ln \cS_{K})$.
Moreover, the form
$\omega_{K_0}=\psi^*\omega_K$ ($\omega_K=\omega$)
is the fundamental form of the K\"ahler metric
$\mathbf{g}_{K_0}$ on
$T(G/K_0)$ associated with the canonical complex structure
$J^{K_0}_c$. Since
$\Ric(\mathbf{g}_{K_0})=\psi^*\bigl(\Ric(\mathbf{g}_{K})\bigr)$
and as we proved above,
$\Ric(\mathbf{g}_{K_0})=- \ri\, \partial\bar\partial\ln \cS_{K_0}$,
we obtain~(\ref{eq.3.20}).
\end{proof}

\section{Invariant Ricci-flat K\"ahler metrics on tangent
bundles of compact Riemannian symmetric spaces}
\label{s.4}

We continue with the previous notations but in this section
and the following one it is assumed in addition that
$G/K$ is a rank-$r$ Riemannian symmetric space of a
connected, compact (possibly with nontrivial center) Lie group
$G$.

\subsection{Root theory of Riemannian symmetric spaces
and reduced symmetric spaces of maximal rank}
\label{ss.4.1}

Here we will review a few facts about Riemannian symmetric
spaces~\cite[Ch.\ VII, \S2, \S11]{He}.
We have
\begin{equation}\label{eq.4.1}
\fg=\fm\oplus\fk,
\quad\text{where }\quad
[{\mathfrak m},{\mathfrak m}]\subset{\mathfrak k},
\quad [\fk,\fm]\subset \fm,
\quad [\fk,\fk]\subset \fk,
\quad \text{and} \quad \fk\bot \fm.
\end{equation}
In other words, there exists an involutive automorphism
$\sigma \colon \fg\to\fg$ such that
\begin{equation*}
\fk=(1+\sigma)\fg
\quad \text{and} \quad
\fm=(1-\sigma)\fg.
\end{equation*}
Moreover the scalar product $\langle\cdot ,\cdot \rangle$
is $\sigma$-invariant.

Let $\fa\subset\fm$ be some Cartan subspace of
the space ${\mathfrak m}$. There exists a $\sigma$-invariant Cartan
subalgebra $\ft$ of $\fg$ containing the commutative subspace $\fa$,
i.e.
\begin{equation*}
\ft=\fa\oplus\ft_0,
\;\; \text{where}\;\;
\fa=(1-\sigma)\ft, \;\;
\ft_0=(1+\sigma)\ft.
\end{equation*}
Then the complexification $\ft^\bbC$ is a Cartan subalgebra of the
reductive complex Lie algebra $\fg^\bbC$ and we have
the root space decomposition
\begin{equation*}
{\mathfrak g}^{\mathbb C}=\ft^\bbC\oplus\sum_{\alpha\in\Delta}
\tilde{\mathfrak g}_\alpha.
\end{equation*}
Here $\Delta$ is the root system of $\fg^\bbC$ with respect to
the Cartan subalgebra $\ft^\bbC$. For each $\alpha\in\Delta$ we have
\begin{equation*}
\tilde{\mathfrak g}_\alpha=\big\{\tilde\xi\in{\mathfrak g}^{\mathbb C}:
\ad_{\kern1pt \tilde t}\tilde\xi=\alpha(\tilde t)\tilde \xi,\;
\tilde t\in{\mathfrak t}^{\mathbb C}\big\}
\quad\text{and}\;\;
\dim_{\kern1pt \bbC} \tilde{\mathfrak g}_\alpha=1.
\end{equation*}
It is evident that the
centralizer $\tilde{\mathfrak g}_0$ of the space $\fa^\bbC$
in $\fg^\bbC$ is the subalgebra
\begin{equation}\label{eq.4.2}
\tilde{\mathfrak g}_0=\ft^\bbC\oplus\sum_{\alpha\in\Delta_0}
\tilde{\mathfrak g}_\alpha,
\end{equation}
where $\Delta_0=\{\alpha\in\Delta: \alpha|_{\fa^\bbC}=0\}$
is the root system of the reductive Lie algebra
$\tilde{\mathfrak g}_0$ with respect to its Cartan
subalgebra $\ft^\bbC$.

Since the algebra
$\tilde{\mathfrak g}_0$ coincides with the centralizer of
some (regular) element $x_\Pi\in\fa$ in
$\fg^\bbC$, there exists a basis
$\Pi$ of $\Delta$  (a system of simple roots) such that
$\Pi_0=\Pi\cap\Delta_0$ is a basis of
$\Delta_0$. Indeed, the element
$-\ai x_\Pi\in\ai\ft$ belongs to the closure of some Weyl
chamber in $\ai\ft$ determining the basis $\Pi$. Then
$\Pi_0=\{\alpha\in\Pi: \alpha(-\ai x_\Pi)=0\}$.  The bases
$\Pi$ and $\Pi_0$ determine uniquely the subsets
$\Delta^+$ and $\Delta^+_0$ of positive roots of
$\Delta$ and
$\Delta_0$, respectively. It is evident that
\begin{equation*}
\Delta^+\setminus \Delta^+_0=\{\alpha\in \Delta: \alpha(-\ai x_\Pi)>0\}.
\end{equation*}

The set
$\Sigma=\{\lambda\in (\fa^\bbC)^*:
\lambda=\alpha|_{\fa^\bbC},\ \alpha\in \Delta\setminus\Delta_0\}$
is the set of restricted roots of the triple $(\fg,\fk, \fa)$, which is
independent of the choice of the $\sigma$-invariant Cartan subalgebra
$\ft$ containing the Cartan subspace $\fa$. The following decomposition
\begin{equation}\label{eq.4.3}
{\mathfrak g}^{\mathbb C}=\tilde{\mathfrak g}_0\oplus
\sum_{\lambda\in\Sigma^+}
(\tilde{\mathfrak g}_\lambda\oplus\tilde{\mathfrak g}_{-\lambda}),
\quad \text{where}\quad
\tilde{\mathfrak g}_\lambda
= \sum_{\alpha\in\Delta\setminus\Delta_0,\, \alpha|_{\fa^\bbC}=\lambda}
\tilde{\mathfrak g}_\alpha,
\end{equation}
and $\Sigma^+$ denotes the subset
of positive restricted roots in $\Sigma$ determined by the
set of positive roots $\Delta^+$,
gives us the simultaneous diagonalization of
$\operatorname{ad}({\mathfrak a}^{\mathbb C})$ on
${\mathfrak g}^{\mathbb C}$. Denote by
$m_\lambda$ the multiplicity of the restricted root
$\lambda\in\Sigma^+$, that is,
$m_\lambda=\card\{\alpha\in\Delta: \alpha|_{\fa^\bbC}=\lambda\}$.

The set $\Sigma$ is an abstract (not necessarily reduced) root
system and its subset
$\Pi_\Sigma=\{\lambda\in (\fa^\bbC)^*:
\lambda=\alpha|_{\fa^\bbC},\ \alpha\in \Pi\setminus\Pi_0\}$
is a basis of
$\Sigma$ containing $\dim\fa$ elements~\cite[Ch.\ VII, Theorem
2.19]{He}.

\begin{lemma}\label{le.4.1}
Let $\Delta'=\{\alpha\in\Delta: \alpha(\ft_0)=0\}$. Then
$\Delta'\subset \Delta$ is a root subsystem of the root
system $\Delta$ and $\Delta'\subset \Delta\setminus\Delta_0$.
If $\alpha\in\Delta'$, $\widehat\alpha\in\Delta$,
with $\alpha\ne\widehat\alpha$ and
$\alpha|_{\fa}=\widehat\alpha|_{\fa}$, then
$\alpha-\widehat\alpha\in\Delta_0$.
In particular, for any root $\alpha\in\Delta'$
the following conditions are equivalent:
\begin{mlist}
\item [$(1)$]
$\alpha+\beta\not\in\Delta$  for\ all $\beta\in\Delta_0;$
\item [$(2)$]
the restricted root $\lambda=\alpha|_{\fa^\bbC}$ has multiplicity
$1$ (as an element of the restricted root system $\Sigma$$)$.
\end{mlist}
\end{lemma}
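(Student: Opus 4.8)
The plan is to run everything through the involution $\sigma$ and the elementary theory of root strings. Since $\ft$ is $\sigma$-invariant, the $\bbC$-linear extension of $\sigma$ to $\fg^\bbC$ preserves $\ft^\bbC$ and hence permutes the root system $\Delta$. On $\ft=\fa\oplus\ft_0$ one has $\sigma|_\fa=-\Id$ and $\sigma|_{\ft_0}=\Id$ (because $\fa\subset\fm$ and $\ft_0\subset\fk$), so on $(\ft^\bbC)^*=(\fa^\bbC)^*\oplus(\ft_0^\bbC)^*$ the contragredient action negates the first summand and fixes the second. Writing $\gamma=\gamma^\fa+\gamma^0$ for the corresponding decomposition of a functional, we get $\sigma\gamma=-\gamma^\fa+\gamma^0$, so $\Delta'=\{\gamma\in\Delta:\gamma^0=0\}=\{\gamma\in\Delta:\sigma\gamma=-\gamma\}$ while $\Delta_0=\{\gamma\in\Delta:\gamma^\fa=0\}=\{\gamma\in\Delta:\sigma\gamma=\gamma\}$. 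First I would record that $\Delta'$ is a subsystem: for $\alpha,\beta\in\Delta'$ the reflection $s_\alpha\beta=\beta-\langle\beta,\alpha^\vee\rangle\alpha$ again has vanishing $\ft_0$-component, hence lies in $\Delta\cap\{\gamma^0=0\}=\Delta'$, and $-\alpha=s_\alpha\alpha\in\Delta'$. If $\alpha\in\Delta'\cap\Delta_0$ then $\alpha^\fa=\alpha^0=0$, i.e. $\alpha=0$, which is impossible; so $\Delta'\subset\Delta\setminus\Delta_0$.

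The core of the proof is the displayed assertion. Let $\alpha\in\Delta'$ and $\widehat\alpha\in\Delta$ with $\widehat\alpha\neq\alpha$ and $\widehat\alpha|_\fa=\alpha|_\fa$. Since $\alpha\in\Delta'$ we have $\alpha=\alpha^\fa$, and the hypothesis $\widehat\alpha|_\fa=\alpha|_\fa$ says precisely $\widehat\alpha^\fa=\alpha$; therefore $\widehat\alpha=\alpha+\widehat\alpha^0$ and $\sigma\widehat\alpha=-\alpha+\widehat\alpha^0=\widehat\alpha-2\alpha$. As $\sigma$ permutes $\Delta$, this shows $\widehat\alpha-2\alpha\in\Delta$. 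Now I would invoke that the $\alpha$-string of roots through $\widehat\alpha$, namely $\widehat\alpha-p\alpha,\dots,\widehat\alpha+q\alpha$, is unbroken ($\Delta$ being a reduced root system); it contains both $\widehat\alpha$ and $\widehat\alpha-2\alpha$, so $p\geq 2$ and consequently $\widehat\alpha-\alpha\in\Delta$. But $\widehat\alpha-\alpha=\widehat\alpha^0$ is nonzero and vanishes on $\fa$, hence $\widehat\alpha-\alpha\in\Delta_0$, i.e. $\alpha-\widehat\alpha\in\Delta_0$. I expect this root-string step to carry essentially all the content of the lemma; everything around it is bookkeeping.

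Finally, for the equivalence, fix $\alpha\in\Delta'$ and set $\lambda=\alpha|_{\fa^\bbC}$, so that $m_\lambda=\card\{\gamma\in\Delta:\gamma|_{\fa^\bbC}=\lambda\}$. Every $\gamma$ in this set other than $\alpha$ satisfies $\gamma|_\fa=\alpha|_\fa$, hence by the claim just proved $\gamma-\alpha\in\Delta_0$; conversely, for any $\beta\in\Delta_0$ with $\alpha+\beta\in\Delta$ the root $\alpha+\beta$ restricts to $\lambda$ on $\fa^\bbC$ and is distinct from $\alpha$. Since $\beta\mapsto\alpha+\beta$ is injective, this identifies $\{\gamma\in\Delta:\gamma|_{\fa^\bbC}=\lambda\}\setminus\{\alpha\}$ with $\{\beta\in\Delta_0:\alpha+\beta\in\Delta\}$, so $m_\lambda=1+\card\{\beta\in\Delta_0:\alpha+\beta\in\Delta\}$. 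Therefore $m_\lambda=1$ holds exactly when there is no $\beta\in\Delta_0$ with $\alpha+\beta\in\Delta$, which is condition $(1)$; this gives $(1)\Leftrightarrow(2)$ and completes the proof.
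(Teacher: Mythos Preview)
Your proof is correct, and the overall architecture matches the paper's: first dispose of the subsystem and disjointness claims, then prove the key implication ``$\alpha\in\Delta'$, $\widehat\alpha|_\fa=\alpha|_\fa$, $\widehat\alpha\neq\alpha$ $\Rightarrow$ $\alpha-\widehat\alpha\in\Delta_0$'', and finally read off the equivalence $(1)\Leftrightarrow(2)$. The difference lies in how that key implication is established. The paper works metrically: since $A_\alpha\in\ai\fa$ and $A_\alpha-A_{\widehat\alpha}\in\ai\ft_0$ are Killing-orthogonal, one gets $\langle\alpha,\widehat\alpha\rangle_K=\langle\alpha,\alpha\rangle_K>0$, and then invokes the standard fact that a strictly positive inner product of two distinct roots forces their difference to be a root. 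You instead exploit the $\sigma$-symmetry: from $\sigma\widehat\alpha=\widehat\alpha-2\alpha\in\Delta$ and the unbrokenness of the $\alpha$-string through $\widehat\alpha$ (legitimate here since $\Delta$ is reduced and $\widehat\alpha\neq\pm\alpha$, the case $\widehat\alpha=-\alpha$ being excluded because it would force $\alpha|_\fa=0$) you conclude $\widehat\alpha-\alpha\in\Delta$. Both routes rest on the same underlying root-string machinery, but yours makes the role of the involution more visible and avoids introducing the Killing form explicitly, while the paper's is marginally shorter. Your bijection $\{\gamma\in\Delta:\gamma|_{\fa^\bbC}=\lambda\}\setminus\{\alpha\}\leftrightarrow\{\beta\in\Delta_0:\alpha+\beta\in\Delta\}$ is a clean way to package the final equivalence; the paper states only the resulting ``$m_\lambda>1$ iff some $\alpha+\beta\in\Delta$'' without spelling this out.
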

\begin{proof}
The set $\Delta'$ is an (abstract) root subsystem of
$\Delta$ because the subset $\Delta'\subset\Delta$ is symmetric
($\Delta'=-\Delta'$) and closed (if
$\alpha_1,\alpha_2\in\Delta'$,
$\alpha_1+\alpha_2\in\Delta$ then
$\alpha_1+\alpha_2\in\Delta'$). We have
$\Delta'\cap\Delta_0=\emptyset$ because
$\fa\oplus\ft_0=\ft$.

We now look at the standard scalar product on the real subspace
$V\subset(\ft^\bbC)^*$ spanned by the set $\Delta\subset(\ft^\bbC)^*$.
We can suppose that the Lie algebra $\fg$ is semisimple. Consider on
$\fg^\bbC$ the Killing form
$\langle\cdot ,\cdot\rangle_K$ (which up to multiplication
by a non-zero scalar coincides with our form
$\langle\cdot ,\cdot\rangle$ on each (real) simple ideal of
$\fg$).
For each $\bbC$-linear form $\mu$ on the Cartan
subalgebra $\ft^\bbC$ let $A_\mu\in\ft^\bbC$ be
determined by $\mu(A)=\langle A_\mu, A \rangle_K$
for all $A\in\ft^\bbC$ and put
$\langle\mu_1 ,\mu_2\rangle_K\eqdef \langle A_{\mu_1},
A_{\mu_2}\rangle_K$ for any two elements $\mu_1,\mu_2\in
(\ft^\bbC)^*$. It is well known that for each $\mu\in\Delta$
the vector $A_\mu\in \ai\ft$ and that the restriction
of the Killing form $\langle\cdot ,\cdot\rangle_K$ to $\ai\ft$
is positive-definite.

Since $\fa\bot\ft_0$,
$\alpha|_{\ft_0}=0$ and
$\alpha|_{\fa}=\widehat\alpha|_{\fa}$,
we obtain that $A_\alpha\in\ai\fa$,
$A_\alpha-A_{\widehat\alpha}\in\ai\ft_0$ and,
consequently, $\langle \alpha,\alpha-\widehat\alpha\rangle_K=0$.
Thus
$$
\langle \alpha,\widehat\alpha\rangle_K
=\langle \alpha,\alpha-(\alpha-\widehat\alpha)\rangle_K
=\langle \alpha,\alpha\rangle_K>0.
$$
By the well-known property of root systems
(see for example~\cite[Ch.\ 4,\S1, Theorem 1]{Bo})
if $\langle\alpha,\widehat\alpha\rangle_K>0$ then
$\alpha-\widehat\alpha\in\Delta$ unless $\alpha=\widehat\alpha$.
Since $(\alpha-\widehat\alpha)(\fa)=0$,
$\alpha-\widehat\alpha\in\Delta_0$
by definition of the root subsystem $\Delta_0$.
Hence $m_\lambda>1$ if and only if
there exists $\beta \in \Delta_0$ such that $\alpha + \beta \in \Delta$
(because $(\alpha + \beta)|_\fa = \alpha|_\fa$).
\end{proof}

For each linear form $\lambda$ on ${\mathfrak a}^{\mathbb C}$ put
\begin{equation}\label{eq.4.4}
\begin{split}
{\mathfrak m}_\lambda&\eqdef
\big\{\eta\in{\mathfrak m}:\operatorname{ad}^2_w(\eta)
=\lambda^2(w)\eta,\ \forall w\in{\mathfrak a}\big\}, \\
{\mathfrak k}_\lambda&\eqdef
\big\{\zeta\in{\mathfrak k}: \operatorname{ad}^2_w(\zeta)
=\lambda^2(w)\zeta,\
\forall w\in{\mathfrak a}\big\}.
\end{split}
\end{equation}

Then ${\mathfrak m}_{\lambda}={\mathfrak m}_{-\lambda}$,
${\mathfrak k}_{\lambda}={\mathfrak k}_{-\lambda}$,
${\mathfrak m}_0={\mathfrak a}$ and ${\mathfrak k}_0$ equals
${\mathfrak h}$, the centralizer of ${\mathfrak a}$ in
${\mathfrak k}$.

It is clear that $\fm^\bbC_\lambda\oplus\fk^\bbC_\lambda=
\tilde{\mathfrak g}_\lambda\oplus\tilde{\mathfrak g}_{-\lambda}$
for $\lambda\in\Sigma^+$ and
$\tilde{\mathfrak g}_0=\fm_0^\bbC\oplus\fk_0^\bbC=
\fa^\bbC\oplus\fh^\bbC$
(the Cartan subspace $\fa^\bbC$ is a maximal commutative subspace
of $\fm^\bbC$).
Note also here that by~(\ref{eq.4.2}) the subspace
\begin{equation*}
\ft_0=(1+\sigma)\ft
\quad\text{is a Cartan subalgebra of the centralizer}\ \fh
\ \text{and}\ \ft=\fa\oplus\ft_0.
\end{equation*}

By~\cite[Ch.\ VII, Lemma 11.3]{He}, the following
decompositions are direct and orthogonal:
\begin{equation}\label{eq.4.5}
{\mathfrak m}={\mathfrak a}\oplus\sum_{\lambda\in\Sigma^+}
{\mathfrak m}_\lambda,
\qquad
{\mathfrak k}={\mathfrak h}\oplus
\sum_{\lambda\in\Sigma^+}{\mathfrak k}_\lambda.
\end{equation}
We shall put
\[
{\mathfrak m}^+ \overset{\mathrm{def}}{=} \sum_{\lambda\in\Sigma^+}
{\mathfrak m}_\lambda,
\qquad
{\mathfrak k}^+ \overset{\mathrm{def}}{=}
\sum_{\lambda\in\Sigma^+}{\mathfrak k}_\lambda.
\]

Since the Lie algebra $\fg$ is compact then
$\lambda({\mathfrak a})\subset \ai {\mathbb R}$.
Define the linear function $\lambda' \colon \fa\to\bbR$,
$\lambda\in\Sigma^+$, by the relation $\ai\lambda'=\lambda$.
We need the following lemma, which is a
generalization of~\cite[Ch.\ VII, Lemma 2.3]{He}.

\begin{lemma}\label{le.4.2}
For any vector
$\xi_\lambda\in{\mathfrak m}_\lambda$,
$\lambda\in\Sigma^+$, there exists a unique vector
$\zeta_\lambda\in{\mathfrak k}_\lambda$ such that
\begin{equation}\label{eq.4.6}
[w,\xi_\lambda]=  -\lambda'(w)\zeta_\lambda,
\quad [w,\zeta_\lambda]= \lambda'(w)\xi_\lambda
\qquad\text{for all}\ w\in{\mathfrak a}.
\end{equation}
In particular, $\dim{\mathfrak m}_\lambda
=\dim{\mathfrak k}_\lambda=m_\lambda$ and there exists a unique
endomorphism $T \colon \fm^+ \oplus \fk^+ \to \fm^+ \oplus \fk^+$
such that
\begin{equation}\label{eq.4.7}
\ad_w|_{\fm_\lambda\oplus\fk_\lambda}
=\lambda'(w)T |_{\fm_\lambda\oplus\fk_\lambda},
\;\; T ({\fm_\lambda})={\fk_\lambda},
\;\; T ({\fk_\lambda})={\fm_\lambda},
\ \forall \lambda\in\Sigma^+.
\end{equation}
This endomorphism is orthogonal and $T^2= -\Id_{\fm^+ \oplus \fk^+}$.
\end{lemma}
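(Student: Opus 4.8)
The statement is essentially the classical Lemma VII.2.3 of Helgason adapted to restricted roots of higher multiplicity, so I would follow the same strategy, working space-by-space over each $\lambda\in\Sigma^+$ and then patching. First I fix $\lambda\in\Sigma^+$ and a Cartan subspace element $w_0\in\fa$ with $\lambda'(w_0)\neq 0$. Since $\fg$ is compact, $\ad_{w_0}$ is skew-symmetric with respect to $\langle\cdot,\cdot\rangle$, hence $\ad_{w_0}^2$ is symmetric and negative semi-definite; its restriction to $\fm_\lambda\oplus\fk_\lambda$ equals $\lambda^2(w_0)\,\Id=-\lambda'(w_0)^2\,\Id$ by the very definition~(\ref{eq.4.4}). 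Therefore on $\fm_\lambda\oplus\fk_\lambda$ the operator $T_0\eqdef\frac{1}{\lambda'(w_0)}\ad_{w_0}$ satisfies $T_0^2=-\Id$ and is orthogonal (it is $\frac{1}{\lambda'(w_0)}$ times a skew-symmetric operator whose square is $-\lambda'(w_0)^2\Id$). Moreover $[\fa,\fm]\subset\fk$ and $[\fa,\fk]\subset\fm$ by~(\ref{eq.4.1}), so $T_0(\fm_\lambda)\subseteq\fk_\lambda$ and $T_0(\fk_\lambda)\subseteq\fm_\lambda$ (one checks $\ad_{w_0}$ preserves the $\ad_w^2$-eigenspace condition because $[\fa,\fa]=0$), and since $T_0$ is invertible these inclusions are equalities. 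This already gives $\dim\fm_\lambda=\dim\fk_\lambda$.

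Next I show the pair $(\xi_\lambda,\zeta_\lambda)$ and the operator $T$ do not depend on the choice of $w_0$, i.e.\ I must produce, for each $\xi_\lambda\in\fm_\lambda$, a single $\zeta_\lambda\in\fk_\lambda$ with $[w,\xi_\lambda]=-\lambda'(w)\zeta_\lambda$ for \emph{all} $w\in\fa$ simultaneously. Define $\zeta_\lambda\eqdef -\frac{1}{\lambda'(w_0)}[w_0,\xi_\lambda]=-T_0\xi_\lambda\in\fk_\lambda$. It remains to check $[w,\xi_\lambda]=-\lambda'(w)\zeta_\lambda$ for arbitrary $w\in\fa$. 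The clean way is the Jacobi identity: for $w,w'\in\fa$ one has $[w,[w',\xi_\lambda]]=[w',[w,\xi_\lambda]]$ since $[w,w']=0$, so the linear map $w\mapsto [w,\xi_\lambda]$ from $\fa$ into $\fk_\lambda$ intertwines the (commuting) operators $\ad_{w'}|_{\fm_\lambda}$ and $\ad_{w'}|_{\fk_\lambda}$; combined with the fact that on $\fm_\lambda\oplus\fk_\lambda$ we have $\ad_{w'}^2=-\lambda'(w')^2\Id$, a short argument (decompose $\fa=\bbR w_0\oplus\ker\lambda'|_\fa$ and note $[\ker\lambda'|_\fa,\xi_\lambda]$ lies in $\fk_\lambda$ and is annihilated by the invertible $\ad_{w_0}$, hence is zero) shows $[w,\xi_\lambda]$ is proportional to $[w_0,\xi_\lambda]$ with factor $\lambda'(w)/\lambda'(w_0)$, which is exactly~(\ref{eq.4.6}). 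Symmetrically $[w,\zeta_\lambda]=\lambda'(w)\xi_\lambda$, using $T_0^2=-\Id$. Uniqueness of $\zeta_\lambda$ is immediate since $\lambda'(w_0)\neq 0$.

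Finally I assemble the global endomorphism $T$ of $\fm^+\oplus\fk^+$: on each block $\fm_\lambda\oplus\fk_\lambda$ set $T\eqdef T_0^{(\lambda)}=\frac{1}{\lambda'(w_0)}\ad_{w_0}|_{\fm_\lambda\oplus\fk_\lambda}$, which is well-defined and $\lambda$-independent in the sense just proved, and extend by the orthogonal direct-sum decomposition~(\ref{eq.4.5}). By construction $\ad_w|_{\fm_\lambda\oplus\fk_\lambda}=\lambda'(w)T|_{\fm_\lambda\oplus\fk_\lambda}$, $T(\fm_\lambda)=\fk_\lambda$, $T(\fk_\lambda)=\fm_\lambda$, and block-by-block $T^2=-\Id$ and $T$ orthogonal, whence $T^2=-\Id_{\fm^+\oplus\fk^+}$ and $T$ orthogonal globally (the blocks are mutually $\langle\cdot,\cdot\rangle$-orthogonal by~(\ref{eq.4.5})). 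Uniqueness of $T$ follows from the prescription $\ad_w=\lambda'(w)T$ on $\fm_\lambda\oplus\fk_\lambda$ together with existence of $w$ with $\lambda'(w)\neq 0$. The one point needing care — the place I expect to be the main obstacle relative to Helgason's rank-one, multiplicity-free picture — is the simultaneity over all $w\in\fa$: when $m_\lambda>1$ the spaces $\fm_\lambda,\fk_\lambda$ are higher-dimensional and a priori $\ad_{w}$ for different $w$ could act by different complex structures, so the Jacobi-identity/commuting-operators argument in the previous paragraph (equivalently, the observation that $\ad_{w_0}$ and $\ad_{w_1}$ restricted to $\fm_\lambda\oplus\fk_\lambda$ commute and each square to a negative scalar, forcing them to be scalar multiples of one another on that block) is the crux and must be done honestly.
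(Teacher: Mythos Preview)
Your proof is correct and takes a genuinely different route from the paper. The paper works in the complexification: it uses the identification $(\fm_\lambda\oplus\fk_\lambda)^\bbC=\tilde\fg_\lambda\oplus\tilde\fg_{-\lambda}$, observes that the involution $\sigma$ swaps $\tilde\fg_\lambda$ and $\tilde\fg_{-\lambda}$, and deduces that the projections $\tilde\fg_\lambda\to\fm_\lambda^\bbC$ and $\tilde\fg_\lambda\to\fk_\lambda^\bbC$ are isomorphisms; the vector $\zeta_\lambda$ is then read off from the unique $E=\xi_\lambda+\zeta\in\tilde\fg_\lambda$ (with $\zeta_\lambda=i\zeta$), and the eigenvector equation $[w,E]=\lambda(w)E$ gives~(\ref{eq.4.6}) for all $w\in\fa$ at once. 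Your approach stays entirely real, extracting everything from skew-symmetry of $\ad_{w_0}$ and the defining relation $\ad_w^2=-\lambda'(w)^2\Id$ on $\fm_\lambda\oplus\fk_\lambda$. The paper's argument is more structural and makes the link to the root multiplicity $m_\lambda=\dim_\bbC\tilde\fg_\lambda$ transparent; yours is more self-contained and avoids complexification altogether.

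One small wobble: in the simultaneity step you assert that $[\ker\lambda'|_\fa,\xi_\lambda]$ is ``annihilated by the invertible $\ad_{w_0}$''. By Jacobi, $\ad_{w_0}[w,\xi_\lambda]=[w,\ad_{w_0}\xi_\lambda]$ with $\ad_{w_0}\xi_\lambda\in\fk_\lambda$, so you are reduced to showing $[w,\fk_\lambda]=0$ for $w\in\ker\lambda'$ --- essentially the same statement you want on $\fm_\lambda$. The direct argument is shorter and non-circular: for $w\in\ker\lambda'$ you have $\ad_w^2|_{\fm_\lambda\oplus\fk_\lambda}=-\lambda'(w)^2\Id=0$, and since $\ad_w$ is skew-symmetric, $\|\ad_w\xi_\lambda\|^2=-\langle\ad_w^2\xi_\lambda,\xi_\lambda\rangle=0$, hence $[w,\xi_\lambda]=0$. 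With that fix the rest of your argument goes through cleanly.
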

\begin{proof}
 As we remarked above,
 \begin{equation}\label{eq.4.8}
 ({\mathfrak m}_\lambda\oplus{\mathfrak k}_\lambda)^{\mathbb
 C}=  \tilde{\mathfrak g}_\lambda\oplus\tilde{\mathfrak
g}_{-\lambda}.
 \end{equation}
 It is well known that
the $\bbC$-linear extension of $\sigma$
 is an involutive automorphism of the complex Lie algebra
 $\fg^\bbC$. We denote this involution also by
 $\sigma$. Then
 \begin{equation}\label{eq.4.9}
 \sigma(\tilde{\mathfrak g}_{\lambda})=\tilde{\mathfrak g}_{-\lambda}
 \quad\text{and}\quad
 \sigma(\tilde{\mathfrak g}_{-\lambda})=\tilde{\mathfrak g}_{\lambda}.
 \end{equation}
 Indeed, for any $w\in\fa$ and
 $E\in \tilde{\mathfrak g}_{\lambda}$ we have
 $[w,E]=\lambda(w)E$ and, consequently,
 $[-w,\sigma(E)]=\lambda(w)\sigma(E)$ because
 $\sigma(w)=-w$. Thus
 $\sigma(\tilde{\mathfrak g}_{\lambda})\subset
 \tilde{\mathfrak g}_{-\lambda}$. Similarly, we can show
 that $\sigma(\tilde{\mathfrak g}_{-\lambda})\subset
 \tilde{\mathfrak g}_{\lambda}$. Taking into account that $\sigma$
 is nondegenerate, we obtain~(\ref{eq.4.9}).

 Since $\tilde{\mathfrak g}_{\lambda}\cap\tilde{\mathfrak g}_{-\lambda}=0$
 and  $\sigma(\xi)=-\xi$ for $\xi\in\fm_\lambda^\bbC$,
 $\sigma(\zeta)=\zeta$ for $\zeta\in\fk_\lambda^\bbC$,
 from~(\ref{eq.4.9}) it follows that
 \begin{equation}\label{eq.4.10}
 \tilde{\mathfrak g}_{\lambda}\cap \fm_\lambda^\bbC=0,
 \quad
 \tilde{\mathfrak g}_{\lambda}\cap \fk_\lambda^\bbC=0,
 \quad
 \tilde{\mathfrak g}_{-\lambda}\cap \fm_\lambda^\bbC=0,
 \quad
 \tilde{\mathfrak g}_{-\lambda}\cap \fk_\lambda^\bbC=0.
 \end{equation}
 From~(\ref{eq.4.8}),~(\ref{eq.4.10}) and dimensional
 arguments it follows that for the subspace
 $\tilde{\mathfrak g}_{\lambda}\subset\fm_\lambda^\bbC\oplus
 \fk_\lambda^\bbC$ the natural projections
 $\tilde{\mathfrak g}_{\lambda}\to \fm_\lambda^\bbC$ and
 $\tilde{\mathfrak g}_{\lambda}\to \fk_\lambda^\bbC$
 are isomorphisms. Therefore for any
 $\xi_\lambda\in \fm_\lambda\subset\fm_\lambda^\bbC$ there exists a
 unique vector
 $E\in \tilde{\mathfrak g}_{\lambda}$ such that
 $E=\xi_\lambda+\zeta$, where $\zeta\in \fk_\lambda^\bbC$.
 But $[w,E]=\lambda(w)E$ for each $w\in\fa$, that is,
 $[w,\xi_\lambda+\zeta]=\lambda(w)(\xi_\lambda+\zeta)$.
 By the relations~(\ref{eq.4.1}),
 $[w,\xi_\lambda]=\lambda(w)\zeta$
 and $[w,\zeta]=\lambda(w)\xi_\lambda$.
 Taking into account that $\lambda(w)\in\ai\bbR$
 and $[w,\xi_\lambda]\in\fk$, putting
 $\zeta_\lambda=i\zeta$ we obtain~(\ref{eq.4.6}).

Relations~(\ref{eq.4.6}) and~(\ref{eq.4.7}) determine
a unique endomorphism $T$ for which
$T(\xi_\lambda)=-\zeta_\lambda$,
$T(\zeta_\lambda)=\xi_\lambda$, $\forall \lambda\in\Sigma^+$.
Now the latter assertion of the lemma is evident
excepting orthogonality.

To prove the orthogonality of
$T$ it is sufficient to note that, by~(\ref{eq.4.5}), for
different $\lambda,\mu\in\Sigma^+$, the subspaces
${\mathfrak m}_\lambda\oplus{\mathfrak k}_\lambda$ and
${\mathfrak m}_\mu\oplus{\mathfrak k}_\mu$
are orthogonal and for two arbitrary pairs
$\{\xi_\lambda,\zeta_\lambda\}$ and
$\{\xi^*_\lambda,\zeta^*_\lambda\}$
for which Conditions~(\ref{eq.4.6}) hold, we have by
Definition~(\ref{eq.4.4}) that
\begin{equation*}
\begin{split}
(\lambda'(w))^2\langle\zeta_\lambda, \zeta^*_\lambda\rangle
& =\langle\ad_w\xi_\lambda, \ad_w\xi^*_\lambda\rangle
=\langle -\ad^2_w\xi_\lambda, \xi^*_\lambda\rangle
\\ & =(\lambda'(w))^2\langle\xi_\lambda, \xi^*_\lambda\rangle
=(\lambda'(w))^2\langle T\zeta_\lambda, T\zeta^*_\lambda\rangle.
\end{split}
\end{equation*}
From this and the similar relation for
$\{\xi_\lambda, \xi^*_\lambda\}$, the orthogonality of $T$ follows.
\end{proof}

Each restricted root $\lambda$ defines a hyperplane
$\lambda(w)=0$ in the vector space $\fa$. These
hyperplanes divide the space $\fa$ into finitely many
connected components, called Weyl chambers.
These are open, convex subsets of
$\fa$~(see \cite[Ch.\ VII, \S 2,\S11]{He}).
Fix the Weyl chamber $W^+$ in $\fa$, containing the element $x_\Pi$:
\begin{equation*}
W^+=\bigl\{w\in\fa: \lambda'(w)>0
\text{ for each }\lambda\in\Sigma^+\bigr\}.
\end{equation*}

The subspace $\fm\subset\fg$ is $\Ad(K)$-invariant.
Each $\Ad(K)$-orbit in $\fm$ intersects
the Cartan subspace $\fa$, that is, $\Ad(K)(\fa)=\fm$.
The open connected subset $\fm^R=\Ad(K)(W^+)$ of $\fm$
is called the set of regular points in $\fm$.

Since the centralizer of a (regular) element
$w\in W^+\subset\fa$ in the space $\fm$ coincides with the Cartan
subspace $\fa\subset\fm$, the centralizer of the element
$w$ coincides with the centralizer of the Cartan
subspace $\fa$ in $\Ad(K)$, i.e.
\begin{equation}\label{eq.4.11}
H=\{k\in K: \Ad_k w=w\}=\{k\in K: \Ad_k u=u \text{ for all } u\in\fa\}
\end{equation}
because by~\cite[Ch.\ VII, Lemma\ 2.14]{He}, one has $G_w=G_\fa$,
where
\begin{equation*}
G_w =\{g\in G: \Ad_g w=w\}
\quad\text{and}\quad
G_\fa =\{g\in G: \Ad_g u=u \text{ for all } u\in\fa\}.
\end{equation*}

Note also that the space $\fh$ (see~(\ref{eq.4.5})),
\begin{equation}\label{eq.4.12}
\fh=\{u\in\fk: [u,\fa]=0\}=\{u\in\fk: [u,w]=0\}= \fk_w
\end{equation}
is the Lie algebra of $H$.
In particular, the subalgebra $\fa\oplus\fh= \fg_w$
($(\fa\oplus\fh)^\bbC=\tilde\fg_0$) is a subalgebra of
$\fg$ of maximal rank.

Recall that a subalgebra $\fb\subset\fg$ is said to be {\it regular}
if its normalizer $\fn(\fb)$ in $\fg$ has maximal rank, that is,
$\rank \fn(\fb)=\rank\fg$. In other words, $\fb$ is regular
if and only if $\fb$ is normalized by some Cartan subalgebra
of the algebra $\fg$.

Our interest now centers on what will be shown to be an important
subalgebra of $\fg$. Let $\fg_H\subset\fg$ be the subalgebra
of fixed points of the group $\Ad(H)$, i.e.
\begin{equation}\label{eq.4.13}
\fg_H\eqdef \{u\in\fg: \Ad_h u=u
\text{ for all } h\in H\}.
\end{equation}
It is evident that $\fg_H\subset \fg_\fh$, where
\begin{equation}\label{eq.4.14}
\fg_\fh \eqdef\bigl\{u\in\fg: [u, \zeta]=0
\text{ for all } \zeta\in \fh\bigr\}
\end{equation}
is the centralizer of the algebra $\fh$ in $\fg$.
Note that in the general case one has
$\fg_H\ne \fg_\fh$ (see Example~\ref{ex.4.6} below).

To understand the structure of the algebra $\fg_H$
we consider more carefully the centralizer $\fg_\fh$.
Since $\fh$ is a compact Lie algebra,
$\fh=\fz(\fh)\oplus[\fh,\fh]$, where $\fz(\fh)$ is the center of
$\fh$ and $[\fh,\fh]$ is a maximal
semisimple ideal of $\fh$. It is clear that
\begin{equation*}
\fz(\fh)\subset \fg_\fh
\quad\text{and}\quad
\fg_\fh\cap [\fh,\fh]=0
\quad\text{because}\
\langle \fg_\fh,[\fh,\fh] \rangle
=\langle [\fg_\fh,\fh],\fh \rangle=0.
\end{equation*}
Thus $\fg_\fh\oplus[\fh,\fh]$ is a subalgebra of $\fg$.

By its definition, $\fz(\fh)$
is a subspace of the center of the algebra $\fg_\fh$.
Moreover, by~(\ref{eq.4.12}), $\fa\subset\fg_\fh$.
The space
$\fa\oplus \fz(\fh)\subset\fg_\fh$ is a Cartan subalgebra of
$\fg_\fh$. Indeed, as we remarked above, the centralizer
$\fg_{\fa}^{\bbC} = \tilde{\mathfrak g}_0 =\fa^\bbC\oplus\fh^\bbC$
(by its definition, $\fk_0=\fh$) is a subalgebra of
$\fg^\bbC$ of maximal rank. Now since
$\fg_\fa=\fa\oplus \fz(\fh)\oplus[\fh,\fh]$ with
$\fa\oplus \fz(\fh)\subset\fg_\fh$, and
$\fg_\fh\oplus[\fh,\fh]$
is a subalgebra of $\fg$, then
$\rank \fg_\fa - \rank [\fh,\fh] = \rank \fg_\fh
= \dim (\fa\oplus \fz(\fh))$.

Since $\fa \oplus \ft_0$ is a Cartan subalgebra of the algebras $\fg$
and $\fg_\fa = \fa \oplus \fh$, the algebra $\ft_0$ is a Cartan subalgebra
of the algebra $\fh$ and, consequently, $\fz(\fh) \subset \ft_0$.
Moreover, since $[\ft_0,\fm] \subset \fm$, $[\ft_0,\fk] \subset \fk$,
$[\fa,\ft_0] = 0$, from Definitions \eqref{eq.4.4}
and \eqref{eq.4.7} we obtain that
\begin{equation}\label{eq.4.15}
\begin{split}
& [\ft_0,\fm_\lambda] \subset \fm_\lambda
\quad \text{and} \quad  [\ft_0,\fk_\lambda] \subset \fk_\lambda
\quad \text{for each} \quad \lambda \in \Sigma^+, \\
& [\ad_x, T] = 0 \quad \text{on} \quad \fm^+ \oplus \fk^+
\quad \text{for each} \quad x \in \ft_0.
\end{split}
\end{equation}

By definition, $[\fg_\fh,\fh]=0$. Hence the space
$\fg_\fh + \fh$
is a subalgebra of $\fg$. Since $\fa\subset\fg_\fh$ and
$\ft_0\subset\fh$, then $\fa \oplus \ft_0\subset \fg_\fh + \fh$.
But $\fa\oplus  \ft_0=\ft$ is a Cartan subalgebra of $\fg$.
This means that the complex reductive Lie algebras
$(\fg_\fh+\fh)^\bbC$, $\fg_\fh^\bbC$ and $\fh^\bbC$ are
$\ad(\ft^\bbC)$-invariant (regular) subalgebras of $\fg^\bbC$.
Taking into account that
$\ft\cap\fg_\fh=\fa\oplus\fz(\fh)$ and $\ft\cap\fh=\ft_0$,
we obtain the following direct sum decompositions:
\begin{equation}\label{eq.4.16}
\fg_\fh^\bbC=\fa^\bbC\oplus\fz(\fh)^\bbC\oplus
\sum_{\alpha\in\Delta_\fh} \tilde{\mathfrak g}_\alpha
\quad\text{and}\quad
\fh^\bbC=\ft_0^\bbC\oplus\sum_{\alpha\in\Delta_0}
\tilde{\mathfrak g}_\alpha,
\end{equation}
where $\Delta_\fh$ is some subset of the root system $\Delta$.
Since the spaces
$\fa\oplus\fz(\fh)\subset\ft$ and $\ft_0\subset\ft$
are Cartan subalgebras of the algebras $\fg_\fh$
and $\fh$ respectively, the decompositions above
are the root space decompositions of
$(\fg_\fh^\bbC,(\fa\oplus\fz(\fh))^\bbC)$ and
$(\fh^\bbC,\ft_0^\bbC)$, respectively. In particular, the subset
$\Delta_\fh\subset\Delta$ is the root system of
$(\fg_\fh^\bbC,(\fa\oplus\fz(\fh))^\bbC)$.

\begin{proposition}\label{pr.4.3}
The algebra $\fg_\fh$ is a $\sigma$-invariant
regular compact
subalgebra (possibly with nontrivial center)  of $\fg$, in particular,
\begin{equation}\label{eq.4.17}
\fg_\fh=\fm_\fh\oplus\fk_\fh,
\quad\text{where}\quad
\fm_\fh=\fg_\fh\cap\fm, \quad \fk_\fh=\fg_\fh\cap\fk,
\end{equation}
and $(\fg_\fh,\fk_\fh)$ is a symmetric pair. The space
$\fa$ is a Cartan subspace of $\fm_\fh\subset\fg_\fh$
and $\fa\oplus\fz(\fh)$ is a Cartan subalgebra of
$\fg_\fh$.
The root subsystem
$\Delta_\fh\subset\Delta$ in~\emph{(\ref{eq.4.16})}
of the reductive complex Lie algebra
$\fg_\fh^\bbC$
is defined by the following relation
\begin{equation}\label{eq.4.18}
\Delta_\fh=\big\{\alpha\in\Delta:
\alpha(\ft_0)=0,\
\alpha+\beta\not\in\Delta\  {\rm for\ all }\ \beta\in\Delta_0
\big\}.
\end{equation}

The set
$\Sigma_\fh=\{\lambda\in (\fa^\bbC)^*:
\lambda=\alpha|_{\fa^\bbC},\ \alpha\in \Delta_\fh\}\subset\Sigma$
is the set of restricted roots of the triple
$(\fg_\fh,\fk_\fh, \fa)$.
Each element $\lambda\in\Sigma_\fh\subset\Sigma$ has
multiplicity $1$, that is, $\dim\fm_\lambda=\dim\fk_\lambda=1$,
and  the following decompositions
are direct and orthogonal:
\begin{equation*}
{\mathfrak m}_\fh={\mathfrak a}\oplus
\sum_{\lambda\in\Sigma_\fh\cap \Sigma^+}{\mathfrak m}_\lambda,
\qquad
{\mathfrak k}_\fh=\fz(\fh)\oplus\sum_{\lambda\in\Sigma_\fh\cap \Sigma^+}
{\mathfrak k}_\lambda.
\end{equation*}
\end{proposition}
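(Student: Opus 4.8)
The plan is to deduce the statement from the structural facts assembled in the paragraphs preceding it, together with Lemmas~\ref{le.4.1} and~\ref{le.4.2}. I would first dispose of the elementary parts. Since $\fh=\fk_0\subseteq\fk$ and the involution $\sigma$ restricts to the identity on $\fk$, it fixes $\fh$ pointwise; being an automorphism of $\fg$, it therefore preserves centralizers, so $\sigma(\fg_\fh)=\fg_{\sigma(\fh)}=\fg_\fh$. Hence $\fg_\fh$ splits into $\sigma$-eigenspaces $\fg_\fh=\fm_\fh\oplus\fk_\fh$ with $\fm_\fh=\fg_\fh\cap\fm$, $\fk_\fh=\fg_\fh\cap\fk$, and restricting the relations~\eqref{eq.4.1} shows that $\sigma|_{\fg_\fh}$ makes $(\fg_\fh,\fk_\fh)$ a symmetric pair. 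Compactness is clear because $\langle\cdot,\cdot\rangle$ restricts to a positive-definite $\ad(\fg_\fh)$-invariant form, and the center may be nontrivial since $\fz(\fh)\subseteq\fg_\fh$ lies in the center of $\fg_\fh$. That $\fg_\fh$ is regular and that $\fa\oplus\fz(\fh)$ is a Cartan subalgebra of $\fg_\fh$ were already obtained above ($\fg_\fh^\bbC$ is $\ad(\ft^\bbC)$-invariant for the Cartan subalgebra $\ft$ of $\fg$, so $\ft$ normalizes $\fg_\fh$). Finally $\fa\subseteq\fg_\fh$ by~\eqref{eq.4.12} and $\fa\subseteq\fm$, so $\fa\subseteq\fm_\fh$; being already maximal abelian in the larger space $\fm$, $\fa$ is a fortiori a Cartan subspace of $\fm_\fh$, and $\fa\oplus\fz(\fh)$ (with $\fz(\fh)\subseteq\ft_0\subseteq\fk$ playing the role of the $\ft_0$-part) is the $\sigma$-invariant Cartan subalgebra of $\fg_\fh$ containing it.

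The heart of the proof is the explicit description~\eqref{eq.4.18} of the root system $\Delta_\fh$. Since $\fg_\fh^\bbC$ is the centralizer of $\fh^\bbC$ in $\fg^\bbC$ and is $\ad(\ft^\bbC)$-stable, one has $\Delta_\fh=\{\alpha\in\Delta:[\fh^\bbC,\tilde{\mathfrak g}_\alpha]=0\}$. Using $\fh^\bbC=\ft_0^\bbC\oplus\sum_{\beta\in\Delta_0}\tilde{\mathfrak g}_\beta$ as in~\eqref{eq.4.16}, the condition $[\ft_0^\bbC,\tilde{\mathfrak g}_\alpha]=0$ is exactly $\alpha(\ft_0)=0$, i.e.\ $\alpha\in\Delta'$ in the notation of Lemma~\ref{le.4.1}; and for $\beta\in\Delta_0$ one has $[\tilde{\mathfrak g}_\beta,\tilde{\mathfrak g}_\alpha]\subseteq\tilde{\mathfrak g}_{\alpha+\beta}$, which vanishes precisely when $\alpha+\beta\notin\Delta$. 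Here I would use the standard root-system fact that $[\tilde{\mathfrak g}_\alpha,\tilde{\mathfrak g}_\beta]=\tilde{\mathfrak g}_{\alpha+\beta}\neq0$ whenever $\alpha,\beta,\alpha+\beta$ are all roots, and note that the degenerate case $\alpha+\beta=0$ cannot occur, because $\Delta'\cap\Delta_0=\emptyset$ (as $\fa\oplus\ft_0=\ft$) forces $-\alpha\notin\Delta_0$ for $\alpha\in\Delta'$. Combining the two conditions gives~\eqref{eq.4.18}. I expect this to be the main obstacle: it is the one place where one must argue carefully about when a bracket of root vectors is nonzero and rule out $\alpha+\beta=0$; everything else is bookkeeping.

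With~\eqref{eq.4.18} established, I would identify $\Sigma_\fh$ and prove multiplicity one. Because $\Delta_\fh\subseteq\Delta'$ and $\Delta'\cap\Delta_0=\emptyset$, no $\alpha\in\Delta_\fh$ restricts to $0$ on $\fa^\bbC$; hence, applying the restricted-root theory of \S\ref{ss.4.1} to the symmetric pair $(\fg_\fh,\fk_\fh)$ with Cartan subspace $\fa$ and $\sigma$-invariant Cartan subalgebra $\fa\oplus\fz(\fh)$ --- whose root system is $\Delta_\fh$ by~\eqref{eq.4.16} --- the set of restricted roots of $(\fg_\fh,\fk_\fh,\fa)$ is exactly $\Sigma_\fh=\{\alpha|_{\fa^\bbC}:\alpha\in\Delta_\fh\}$, and $\Sigma_\fh\subseteq\Sigma$ since $\Delta_\fh\subseteq\Delta\setminus\Delta_0$. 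Every $\alpha\in\Delta_\fh$ satisfies condition~$(1)$ of Lemma~\ref{le.4.1}, so by the equivalence proved there its restriction $\lambda=\alpha|_{\fa^\bbC}$ has multiplicity one in $\Sigma$, i.e.\ $m_\lambda=\dim\fm_\lambda=\dim\fk_\lambda=1$ by Lemma~\ref{le.4.2}.

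For the final decompositions, $m_\lambda=1$ for $\lambda\in\Sigma_\fh^+:=\Sigma_\fh\cap\Sigma^+$ means there is a single root $\alpha\in\Delta_\fh$ with $\alpha|_{\fa^\bbC}=\lambda$, and $\tilde{\mathfrak g}_\alpha\oplus\tilde{\mathfrak g}_{-\alpha}=\tilde{\mathfrak g}_\lambda\oplus\tilde{\mathfrak g}_{-\lambda}=\fm_\lambda^\bbC\oplus\fk_\lambda^\bbC$ by~\eqref{eq.4.8}; since $\Delta_\fh$ is symmetric, \eqref{eq.4.16} rewrites as $\fg_\fh^\bbC=\fa^\bbC\oplus\fz(\fh)^\bbC\oplus\sum_{\lambda\in\Sigma_\fh^+}(\fm_\lambda^\bbC\oplus\fk_\lambda^\bbC)$. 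Intersecting this $\sigma$-stable decomposition successively with $\fm^\bbC$, with $\fk^\bbC$, and with $\fg$ (all the summands being conjugation- and $\sigma$-stable) yields $\fm_\fh=\fa\oplus\sum_{\lambda\in\Sigma_\fh^+}\fm_\lambda$ and $\fk_\fh=\fz(\fh)\oplus\sum_{\lambda\in\Sigma_\fh^+}\fk_\lambda$; directness and orthogonality are inherited from the direct orthogonal decompositions~\eqref{eq.4.5}, since $\fz(\fh)\subseteq\fh$ is orthogonal to $\fk^+$ and to $\fa$.
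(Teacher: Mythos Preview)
Your proof is correct and follows essentially the same route as the paper's: $\sigma$-invariance via $\fh\subset\fk$, the Cartan subspace/subalgebra facts from the preceding discussion, the characterization $\Delta_\fh=\{\alpha:[\fh^\bbC,\tilde\fg_\alpha]=0\}$ unpacked via~\eqref{eq.4.16} and the standard bracket relation $[\tilde\fg_\alpha,\tilde\fg_\beta]=\tilde\fg_{\alpha+\beta}$, and then the appeal to Lemma~\ref{le.4.1} for multiplicity one. You are in fact slightly more careful than the paper in two places: you explicitly rule out the degenerate case $\alpha+\beta=0$ (which the paper passes over silently), and you spell out the passage from~\eqref{eq.4.16} to the real orthogonal decompositions of $\fm_\fh$ and $\fk_\fh$ rather than simply citing~\eqref{eq.4.3}--\eqref{eq.4.5}.
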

\begin{proof}
Since $\fh\subset\fk$, then
$\sigma(\fh)=\fh$ and the centralizer $\fg_\fh$ of
$\fh$ in $\fg$ is $\sigma$-invariant, i.e.\ (\ref{eq.4.17}) holds.

As we proved above, $\fa\oplus\fz(\fh)\subset\fg_\fh$ is a Cartan subalgebra
of $\fg_\fh$. Since $\fa\subset\fm_\fh=\fg_\fh\cap\fm$,
this subspace is a Cartan subspace of $\fm_\fh$ as it is a
maximal commutative subspace of $\fm$.

The root system
$\Delta_\fh\subset\Delta$ of the algebra
$\fg_\fh^\bbC$ in~(\ref{eq.4.16})
is a subset of the root system $\Delta'$
(see Lemma~\ref{le.4.1}).
Indeed, since $[\fg_\fh,\fh]=0$ and
$\ft_0=(1+\sigma)\ft$ is a subspace of
$\fh= \fg_\fa\cap\fk$, we obtain that
$\alpha(\ft_0)=0$ for all $\alpha\in\Delta_\fh$,
that is, $\Delta_\fh\subset\Delta'$.
Now to prove the relation~(\ref{eq.4.18}) describing the
root system $\Delta_\fh$ it is sufficient
to recall that for any roots $\alpha,\beta\in\Delta$, $\alpha+\beta\ne0$,
the commutator $[\tilde\fg_\alpha,\tilde\fg_\beta]=\tilde\fg_{\alpha+\beta}$
if $\alpha+\beta\in\Delta$ and $[\tilde\fg_\alpha,\tilde\fg_\beta]=0$
otherwise~\cite[Ch.\ III, Theorem 4.3]{He}. Taking into
account the second relation in~(\ref{eq.4.16}) we obtain~(\ref{eq.4.18}).
By Lemma~\ref{le.4.1} each restricted root
$\lambda=\alpha|_{\fa^\bbC}$, $\alpha\in\Delta_\fh\subset\Delta$
has multiplicity $1$.

Now all the latter assertions of the proposition follow
from~(\ref{eq.4.3}), (\ref{eq.4.4}), (\ref{eq.4.5}) and
the first decomposition in~(\ref{eq.4.16}).
\end{proof}

To describe the algebra $\fg_H$ we consider now in more detail the
subgroup $H\subset K$. By its definition, $H=K\cap G_\fa$.
By~\cite[Ch.\ VII, Corollary\ 2.8]{He}, the centralizer $G_\fa$
of the commutative subalgebra $\fa$ is connected (it is the union of all
maximal tori containing the torus $\exp\fa\subset G$).
Since $\fg_\fa =\fa\oplus\fh$ is the Lie algebra of the compact
Lie group $G_\fa$, we have that $G_\fa = \exp(\fa\oplus\fh)$.
But $\exp(\fa\oplus\fh)=\exp(\fa)\exp(\fh)$ because $[\fa,\fh]=0$.
The set $H_0=\exp \fh$ is the identity component of the Lie
group $H$ and $H_0\subset K$ because $\fh\subset\fk$.
Therefore $H= G_\fa \cap K= (\exp (\fa)\cap K) H_0$.

\begin{proposition}\label{pr.4.4}
The subalgebra $\fg_H\subset \fg_\fh\subset\fg$
is determined by the relation
\begin{equation}\label{eq.4.19}
\fg_H=\{u\in\fg_\fh: \Ad(D_\fa) u=u\},
\end{equation}
where $D_\fa$ stands for the commutative finite group
\begin{equation}\label{eq.4.20}
D_\fa=\exp(\fa)\cap K= \exp\big(\{v\in \fa: \exp v=\exp(-v)\}\big).
\end{equation}

The algebra $\fg_H\subset \fg_\fh$ is a $\sigma$-invariant
regular compact subalgebra of $\fg$. In particular,
\begin{equation*}
\fg_H=\fm_H\oplus\fk_H,
\quad\text{where}\quad
\fm_H=\fg_H\cap\fm, \quad \fk_H=\fg_H\cap\fk,
\end{equation*}
and $(\fg_H,\fk_H)$ is a symmetric pair. The space
$\fa$ is a Cartan subspace of $\fm_H\subset\fg_H$
and the space $\fa\oplus\fz(\fh)$ is a Cartan subalgebra of $\fg_H$.
For each $\lambda\in\Sigma^+$ and $g\in D_\fa$ we have
that $\Ad_g(\fm_\lambda\oplus\fk_\lambda)=\fm_\lambda\oplus\fk_\lambda$.
The set
\[
\Sigma_H=\{\lambda\in \Sigma_\fh:
\Ad_g|_{\fm_\lambda \oplus \fk_\lambda}
=\Id_{\fm_\lambda \oplus \fk_\lambda}
\ \text{for all}\ g\in D_\fa\}
\]
is the set of restricted roots of the triple
$(\fg_H,\fk_H, \fa)$.
Each element $\lambda\in \Sigma_H\subset\Sigma_\fh\subset\Sigma$ has
multiplicity $1$, that is, $\dim\fm_\lambda=\dim\fk_\lambda=1$.

The following decompositions
are direct and orthogonal:
\begin{equation*}
{\mathfrak m}_H={\mathfrak a}\oplus
\sum_{\lambda\in\Sigma_H\cap\Sigma^+}{\mathfrak m}_\lambda,
\qquad
{\mathfrak k}_H=\fz(\fh)\oplus\sum_{\lambda\in\Sigma_H\cap\Sigma^+}
{\mathfrak k}_\lambda.
\end{equation*}
\end{proposition}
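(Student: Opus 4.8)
The plan is to bootstrap everything from Proposition~\ref{pr.4.3} together with the description $H=(\exp(\fa)\cap K)\,H_0$, $H_0=\exp\fh$, obtained just above. I would first establish~\eqref{eq.4.19}--\eqref{eq.4.20}. Since $H_0$ is connected, a vector is $\Ad(H_0)$-fixed iff it is centralized by $\fh$, i.e.\ lies in $\fg_\fh$; as $H$ is generated by $H_0$ and $D_\fa\eqdef\exp(\fa)\cap K$, this gives $\fg_H=\{u\in\fg_\fh:\Ad(D_\fa)u=u\}$, which is~\eqref{eq.4.19}. For the explicit form of $D_\fa$ I would invoke the symmetry $\theta$ of $G$ with $\rd\theta=\sigma$: for $v\in\fa\subset\fm$ one has $\theta(\exp v)=\exp(\sigma v)=\exp(-v)$, and $\exp v\in K$ is equivalent to $\theta(\exp v)=\exp v$, hence to $\exp v=\exp(-v)$; finiteness of $D_\fa$ then follows because $\{v\in\fa:\exp v=\exp(-v)\}=\{v:\exp(2v)=e\}$ maps under $\exp$ onto the $2$-torsion subgroup of the torus $\exp(\fa)$.

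Next I would verify the structural properties, almost all of which are inherited from Proposition~\ref{pr.4.3}. Because $D_\fa\subset K$, every $\Ad_g$ with $g\in D_\fa$ commutes with $\sigma$ (it preserves both $\fm$ and $\fk$), so $\fg_H$ is $\sigma$-invariant; restricting the grading $\fg=\fm\oplus\fk$ and the relations~\eqref{eq.4.1} yields $\fg_H=\fm_H\oplus\fk_H$ with $(\fg_H,\fk_H)$ a symmetric pair. For regularity I would note $\fa\subset\fg_H$ by~\eqref{eq.4.11}, and $\fz(\fh)\subset\fg_H$ because $[\fa,\fh]=0$ forces $\Ad(\exp\fa)$ (hence $\Ad(D_\fa)$) to act trivially on $\fh$, while $\Ad(H_0)$ fixes the center of $\fh$; thus $\fa\oplus\fz(\fh)\subset\fg_H\subset\fg_\fh$. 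Since also $\ft_0\subset\fh$ centralizes $\fg_\fh\supseteq\fg_H$, the Cartan subalgebra $\ft=\fa\oplus\ft_0$ of $\fg$ normalizes $\fg_H$, so $\fg_H$ is regular. Finally $\fa\oplus\fz(\fh)$, being maximal abelian in $\fg_\fh$ and contained in $\fg_H$, is maximal abelian, hence a Cartan subalgebra, in $\fg_H$, while $\fa$ is a Cartan subspace of $\fm_H$ because it is already maximal abelian in $\fm\supseteq\fm_H$.

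It then remains to pin down $\Sigma_H$. Proposition~\ref{pr.4.3} gives $\fg_\fh=\fa\oplus\fz(\fh)\oplus\sum_{\lambda\in\Sigma_\fh\cap\Sigma^+}(\fm_\lambda\oplus\fk_\lambda)$ with $\dim\fm_\lambda=\dim\fk_\lambda=1$. For $g\in D_\fa\subset G_\fa\cap K$, $\Ad_g$ commutes with each $\ad^2_w$ ($w\in\fa$) and preserves $\fm$ and $\fk$, hence preserves each line $\fm_\lambda$ and $\fk_\lambda$; being orthogonal there, $\Ad_g|_{\fm_\lambda}=\pm\Id$, and $\Ad_g|_{\fk_\lambda}$ carries the same sign since the endomorphism $T$ of Lemma~\ref{le.4.2}, which intertwines $\fm_\lambda$ and $\fk_\lambda$ and commutes with $\ad_w$, commutes with $\Ad_g$ on $\fm_\lambda\oplus\fk_\lambda$. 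Therefore the $\Ad(D_\fa)$-fixed subspace of $\fm_\lambda\oplus\fk_\lambda$ is either $0$ or all of $\fm_\lambda\oplus\fk_\lambda$, the latter precisely when $\Ad_g|_{\fm_\lambda\oplus\fk_\lambda}=\Id$ for every $g\in D_\fa$, i.e.\ when $\lambda\in\Sigma_H$. Combined with~\eqref{eq.4.19} this gives $\fg_H=\fa\oplus\fz(\fh)\oplus\sum_{\lambda\in\Sigma_H\cap\Sigma^+}(\fm_\lambda\oplus\fk_\lambda)$, hence the stated orthogonal decompositions of $\fm_H$ and $\fk_H$ (orthogonality coming from~\eqref{eq.4.5}); and since by~\eqref{eq.4.4} $\ad^2_w$ acts on $\fm_\lambda\oplus\fk_\lambda$ by $\lambda^2(w)$, $\Sigma_H$ is exactly the restricted root system of $(\fg_H,\fk_H,\fa)$, each root having multiplicity $1$ as $\Sigma_H\subset\Sigma_\fh$.

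I expect the main obstacle to be the passage from $\Ad(H)$-invariance to the finite group $D_\fa$: since $H$ is in general disconnected, one must both identify $D_\fa$ explicitly in~\eqref{eq.4.20} and then control precisely how $\Ad(D_\fa)$ acts on each restricted-root summand $\fm_\lambda\oplus\fk_\lambda$ — forcing it to be scalar $\pm\Id$ rather than a nontrivial rotation — and this is where the multiplicity-one statement of Proposition~\ref{pr.4.3} and the commutation with $T$ are indispensable. Everything else is organized bookkeeping layered on Proposition~\ref{pr.4.3}.
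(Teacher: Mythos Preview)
Your proposal is correct and follows the same overall architecture as the paper: reduce from $H$ to the finite group $D_\fa$ via $H=D_\fa H_0$, then intersect the root-space decomposition of $\fg_\fh$ from Proposition~\ref{pr.4.3} with the $\Ad(D_\fa)$-fixed subspace. The only tactical difference is in the key step showing $\Ad_g$ acts as $\pm\Id$ on each $\fm_\lambda\oplus\fk_\lambda$: the paper writes $g=\exp v_j$ and computes explicitly from~\eqref{eq.4.6} that $\Ad_{\exp v_j}$ is the rotation $\cos\lambda'(v_j)\,\Id+\sin\lambda'(v_j)\,T$, then invokes $\exp v_j=\exp(-v_j)$ to force $\sin\lambda'(v_j)=0$; you instead use that $g\in K\cap G_\fa$ makes $\Ad_g$ preserve the line $\fm_\lambda$ (for $\lambda\in\Sigma_\fh$) and is orthogonal there, hence $\pm 1$, with commutation with $T$ propagating the sign to $\fk_\lambda$. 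Your route is a bit more conceptual and does not actually need the order-two description in~\eqref{eq.4.20} to extract the $\pm 1$, while the paper's explicit computation has the advantage of simultaneously handling all $\lambda\in\Sigma^+$ (not just those in $\Sigma_\fh$), which cleanly gives the stated invariance $\Ad_g(\fm_\lambda\oplus\fk_\lambda)=\fm_\lambda\oplus\fk_\lambda$ for every $\lambda\in\Sigma^+$.
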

\begin{proof}
Since $[\fh,\fg_\fh]=0$, the connected Lie group
$\Ad(H_0)$ with Lie algebra $\ad(\fh)$ acts trivially on
$\fg_\fh$. Taking into account that
$H=D_\fa H_0$ we obtain~(\ref{eq.4.19}). Since $K$
is a subgroup of the group of fixed points of certain
involutive automorphism on
$G$ acting by $\exp v\mapsto\exp(-v)$ on
$\exp(\fa)$, we obtain the second relation
in~(\ref{eq.4.20}).  The group
$D_\fa$ is a commutative finite group because
$\exp(\fa)\subset G$ is a toral subgroup,
$K$ is compact, and the intersection
$\exp(\fa)\cap K$ is a group of dimension $0$
($\fa\cap\fk=0$).

The algebra $\fg_H$ is $\sigma$-invariant because
by Definition~(\ref{eq.4.11}), $\sigma \Ad(H)\sigma= \Ad(H)$.
Since: $D_\fa=\{\exp v_1,\ldots,\exp v_s\}$, where
$v_j\in\fa$, $j=1,\ldots, s$; $[\fa,\fa\oplus\fz(\fh)]=0$;
and $\Ad_{\exp v_j}=\exp(\ad_{v_j})$,
then from relations~(\ref{eq.4.6}) we obtain that
$\Ad_{\exp v_j}v=v$ for all $v\in \fa\oplus\fz(\fh)$
and
\begin{equation*}
\begin{split}
\Ad_{\exp v_j}\xi_\lambda
&=\cos (\lambda'(v_j))\xi_\lambda{-}
\sin (\lambda'(v_j))\zeta_\lambda, \\
\Ad_{\exp v_j}\zeta_\lambda
&=\cos (\lambda'(v_j))\zeta_\lambda{+}
\sin (\lambda'(v_j))\xi_\lambda
\end{split}
\end{equation*}
for arbitrary $\xi_\lambda\in\fm_\lambda$ and
$\zeta_\lambda\in\fk_\lambda$, $\lambda\in\Sigma^+$
satisfying condition~\eqref{eq.4.6}. But
$\exp v_j=\exp(-v_j)$ and, consequently,
$\sin (\lambda'(v_j))=0$. Then
$\cos (\lambda'(v_j))\in\{1,-1\}$. Now it is clear that
$\fg_H=\fa\oplus\fz(\fh)\oplus\sum_{\lambda\in\Sigma_H\cap\Sigma^+}
({\mathfrak m}_\lambda\oplus {\mathfrak k}_\lambda)$,
where
\begin{equation*}
 \Sigma_H\cap\Sigma^+  =\{\lambda\in \Sigma_\fh\cap\Sigma^+:
\cos (\lambda'(v_j))=1
\ \text{for all}\  j=1,\ldots,s\}.
\end{equation*}
The last assertion of the proposition follows from
Proposition~\ref{pr.4.3}.
\end{proof}

\begin{remark}\label{re.4.5} \em
Put ${\mathfrak m}_H^+=\sum_{\lambda\in\Sigma_H\cap\Sigma^+}
{\mathfrak m}_\lambda$
and ${\mathfrak k}_H^+=\sum_{\lambda\in\Sigma_H\cap\Sigma^+}
{\mathfrak k}_\lambda$. Consider the orthogonal decompositions:
${\mathfrak m}^+={\mathfrak m}_H^+\oplus{\mathfrak m}_{\tw}^+$
and
${\mathfrak k}^+={\mathfrak k}_H^+\oplus{\mathfrak k}_{\tw}^+$,
where $\fm^+_*  = \sum_{\lambda \in \Sigma^+\backslash \Sigma_H}
\fm_\lambda$
and $\fk^+_*  = \sum_{\lambda \in \Sigma^+\backslash \Sigma_H}
\fk_\lambda$.
Since the following decompositions are orthogonal
\begin{equation*}
\fg_H=\fa\oplus\fm_H^+\oplus\fk_H^+\oplus\fz(\fh),
\quad
\fg=\fa\oplus\fm_H^+\oplus\fk_H^+
\oplus\fm_{\tw}^+\oplus\fk_{\tw}^+\oplus\fh=
\fg_H\oplus(\fm_{\tw}^+\oplus\fk_{\tw}^+)\oplus[\fh,\fh]
\end{equation*}
and $[\fg_H,\fh]=0$, one sees that $\fg_H\oplus[\fh,\fh]$
is a subalgebra of $\fg$. Then
\begin{equation*}
[\fg_H\oplus[\fh,\fh],\,\fm_{\tw}^+\oplus\fk_{\tw}^+]
\subset \fm_{\tw}^+\oplus\fk_{\tw}^+
\quad\text{and}\quad
[\fa\oplus\fh,\, \fm_H^+\oplus\fk_H^+]\subset \fm_H^+\oplus\fk_H^+.
\end{equation*}
Moreover, because by its definition,
$T ({\fm_\lambda})={\fk_\lambda}$,
$T ({\fk_\lambda})={\fm_\lambda}$,
for all restricted roots $\lambda\in\Sigma^+$, we have that
\begin{equation*}
T(\fm_H^+)=\fk_H^+,
\quad
T(\fk_H^+)=\fm_H^+
\quad\text{and}\quad
T(\fm_{\tw}^+)=\fk_{\tw}^+,
\quad
T(\fk_{\tw}^+)=\fm_{\tw}^+.
\end{equation*}
\end{remark}

\begin{example}\label{ex.4.6} \em
Let $G/K= \mathrm{SU}(n)/\mathrm{SO}(n)$, $n\geqslant2$.
Let $\fg= \mathfrak{su}(n)$ and $\fk= \mathfrak{so}(n)$ be the Lie algebras
of $G$ and $K$, i.e.\ the spaces of traceless skew-Hermitian complex
and skew-symmetric real $n \times n$ matrices, respectively. It is clear
that the space $\fa=\{\mathrm{diag}(\ri t_1,\ldots,\ri t_n),
t_j\in\bbR, \sum_{j=1}^n t_j = 0 \}$,
is a Cartan subspace of the space $\fm\subset\fg$.
Since $\fa$ is a Cartan subalgebra of the algebra $\fg$,
the centralizer $\fh\eqdef \fg_\fa \cap\fk=\fa\cap\fk=0$, that is,
the Lie algebra of the group $H$, is trivial
and $\fg_\fh=\fg$. Then by Proposition~\ref{pr.4.4},
$H=D_\fa$ and
$D_\fa\eqdef\exp (\fa) \cap K=\{\mathrm{diag}
(\varepsilon_1,\ldots,\varepsilon_n)\}$, where $\varepsilon_j=\pm 1$
and $\prod_{j=1}^n \varepsilon_j=1$.  It is easy then, on account of
\eqref{eq.4.19}, to verify that $\fg_H=\fa$ if $n\geqslant 3$
(in this case for any $k,j\leqslant n$, $k\ne j$ there
exists an element $g\in D_\fa$ for which $\varepsilon_k\varepsilon_j=-1$)
and $\fg_H=\fg=\mathfrak{su}(2)$ if $n=2$.
In the latter case the group
$H$ coincides with the center of the Lie group $\mathrm{SU}(2)$,
i.e.\ the action of $\Ad(H)$ on $\fg$ is trivial.
\end{example}

Fix in each subspace
$\fm_\lambda$, $\lambda\in\Sigma^+$, some basis
$\{\xi_\lambda^j, j=1, \dotsc ,m_\lambda\}$,
orthonormal with respect to the form
$\langle\cdot,\cdot \rangle$. In the case that
$\lambda\in\Sigma_\fh \cap\Sigma^+$,
$m_\lambda=1$, we have a unique vector
$\xi_\lambda^1$. By Lemma \ref{le.4.2}, for each
$\lambda\in\Sigma^+$ there exists a unique basis
$\{\zeta_\lambda^j,\, j=1, \dotsc,m_\lambda\}$ of
$\fk_\lambda$ such that for each pair
$\{\xi_\lambda^j,\zeta_\lambda^j,
j=1, \dotsc ,m_\lambda\}$, the condition~(\ref{eq.4.6}) holds.
The basis $\{\zeta_\lambda^j,
j=1, \dotsc,m_\lambda\}$, $\lambda\in\Sigma^+$, of $\fk_\lambda$,
is also orthonormal
due to the orthogonality of the operator $T$
(see Lemma~\ref{le.4.2}).
Fix also some orthonormal basis $\{X_1,\dotsc,X_r\}$
of the Cartan subspace $\fa$ and some orthonormal basis
$\{\zeta^k_0, k=1,\dotsc,\dim\fh\}$
of the centralizer $\fh$ of $\fa$ in $\fk$.
We will use the orthonormal basis
\begin{equation}\label{eq.4.21}
X_1,\dotsc,X_r;\;\, \xi_\lambda^j,\,
\zeta_\lambda^j,\, j=1,\dotsc,m_\lambda, \,\lambda\in\Sigma^+;\;
\zeta^k_0,\; k=1,\dotsc,\dim\fh,
\end{equation}
of the algebra $\fg$ in our calculations below.

\subsection{The canonical complex structure on $G/H\times W^+$}
\label{ss.4.2}

Each element  $w\in W^+$ is regular in $\fm$.
Therefore if for some $k\in K$,  $\Ad_k w\in W^+$, then
$\Ad_k(\fa)=\fa$ because $0=[w,\fa]=[\Ad_k w,\Ad_k\fa]$.
Since the Weyl group of the symmetric pair
$(\fg,\fk)$ is simply transitive on the set of Weyl chambers in
$\fa$~(see \cite[Ch.\ VII, Theorem 2.12]{He}),
$\Ad(K)w\cap W^+=\{w\}$. Then by
definition~(\ref{eq.4.11}) of the group $H$, the map
\begin{equation*}
K/H\times W^+\to \fm^R,\quad
(kH,w)\mapsto \Ad_k w,
\end{equation*}
is a well-defined diffeomorphism.
Thus the map
\begin{equation*}
f^+ \colon  G/H\times W^+\to G\times_K\fm^R,
\quad
(gH,w)\mapsto[(g,w)]_K,
\end{equation*}
is a well-defined $G$-equivariant diffeomorphism
of $G/H\times W^+$ onto the subset
$D^+=G\times_K\fm^R$,
which is an open dense subset of $G\times_K\fm$.

It is clear that the following
diagram is commutative
\begin{equation}\label{eq.4.22}
\begin{array}{rrl}
G\times W^+&\stackrel{\mathrm{id}}\longrightarrow&G\times \fm^R\\
\noalign{\smallskip}
{\downarrow\makebox[20pt]{\,$\scriptstyle{\pi_H\times \mathrm{id}}$}}
&&{\makebox[10pt]{$\scriptstyle{\pi}$}\downarrow} \\ \noalign{\smallskip}
G/H\times W^+&
\stackrel{f^+}\longrightarrow&G\times_K\fm^R
\end{array}
\hskip20pt
\begin{array}{rrl}
\makebox[35pt]{$(g,w)$\hfill}&\stackrel{\mathrm{id}}\longmapsto&(g,w)\\
\noalign{\smallskip}
{\downarrow\makebox[18pt]{\;$\scriptstyle{\pi_{H}\times \mathrm{id}}$}}
&&{\makebox[10pt]{$\scriptstyle{\pi}$}\downarrow} \\ \noalign{\smallskip}
(gH,w)&
\stackrel{f^+}\longmapsto&[(g,w)]_K
\end{array},
\end{equation}
where $\pi_H \colon  G\to G/H$ is the canonical projection.

The submersion (projection)
$\pi \colon  G\times\fm\to G\times_K\fm$ is (left)
$G$-equivariant. Therefore, the kernel
${\sK}\subset T(G\times {\mathfrak m})$ of the tangent map
$\pi_{*} \colon  T(G\times\fm)\to T(G\times_K\fm)$
is generated by the global (left)
$G$-invariant vector fields $\zeta^L$, for
$\zeta\in {\mathfrak k}$, on $G\times {\mathfrak m}$,
\begin{equation}\label{eq.4.23}
\zeta^L{(g,w)}=(\zeta^{l}(g) ,[w,\zeta])\in T_g G\times T_w\fm,
\end{equation}
where the tangent space
$T_w\fm$ is identified canonically with the space
$\fm$.

Note also here that the tangent space $T_o(G/H)$ at
$o=\{H\}\in G/H$ can be identified naturally with the
space
$
\fm\oplus\fk^+
= \fa\oplus\fm^+\oplus\fk^+,
$
because by definition $\fk=\fh\oplus\fk^+$
and $\fh$ is the Lie algebra of the group $H$.

We can rewrite the expression for the vector field
$Y^\xi$, $\xi\in\fm$, in~(\ref{eq.3.15}) on the product
$G\times\fm$ in a simpler way
using Lemma~\ref{le.4.2} and the basis~(\ref{eq.4.21}) of
the algebra $\fg$. Indeed, for $w\in W^+\subset\fa$, by~(\ref{eq.3.15})
we have
\begin{align}\label{eq.4.24}
Y^{X_j}(g,w) & =(X_j^{l}(g),\,- \ri X_j),
\quad j=1,\dotsc,r, \\
\label{eq.4.25}
Y^{\xi_\lambda^j}(g,w) &
=\left(\left(\tfrac{1}{\cosh\lambda'_w}\cdot\xi_\lambda^j-  \ri \,
\tfrac{\cosh\lambda'_w - 1}{\sinh\lambda'_w}\cdot
\zeta_\lambda^j\right)^{l}(g)
,\,- \ri \,\tfrac{\lambda'_w\cosh\lambda'_w}{\sinh\lambda'_w}
\cdot\xi_\lambda^j\right),
\end{align}
where $j=1,\dotsc,m_\lambda$, $\lambda\in\Sigma^+$, and
$\lambda'_w\eqdef\lambda'(w)\in\mathbb{R}$.

By Lemma~\ref{le.4.2},
for any (regular) element $w\in W^+\subset\fa\subset\fm$,
the map $\ad_w \colon \fk^+\to\fm$, $\zeta\mapsto [w,\zeta]$,
is nondegenerate
and thus $\fa\oplus\ad_w(\fk^+)=\fm$. Therefore
\begin{equation*}
T_{(e,0)}(G\times\fm)
=\fg\times\fm
=(\fm\times\fm)\oplus {\sK}(e,0)
=((\fm\oplus\fk^+)\times\fa)\oplus {\sK}(e,0),
\end{equation*}
where, as we remarked above, $\fm\oplus\fk^+$ is
identified naturally with the tangent space of $G/H$
at the point $\{H\}$.

Using Lemma~\ref{le.4.2}
again (note that in \eqref{eq.4.25},
$\coth \lambda'_w \cdot \zeta^j_\lambda\in \fk$
and the second component is $\coth \lambda'_w \cdot [w,\zeta^j_\lambda]$)
and from the expression~(\ref{eq.4.23}) of the kernel $\sK(g,w)$
of $\Pi_*(g,w)$, we have for all $w\in W^+$ that
\begin{equation}\label{eq.4.26}
\Pi_*(g,w)(Y^{\xi_\lambda^j}(g,w))
=\Pi_*(g,w)\left(\left(\tfrac{1}{\cosh\lambda'_w}\cdot\xi_\lambda^j- \ri \,
\tfrac{-1}{\sinh\lambda'_w}\cdot\zeta_\lambda^j\right)^{l}(g),\,0\right).
\end{equation}

To describe the
$G$-invariant Ricci-flat K\"ahler metrics on $T(G/K)$ associated
to the canonical complex structure $J^K_c$, we first attempt to
describe such metrics on the
$G$-invariant open and dense subset
\begin{equation*}
T^+(G/K)\eqdef (\phi\circ f^+)(G/H\times W^+)\subset T(G/K),
\end{equation*}
isomorphic to the direct product
$G/H\times W^+\subset G/H\times\mathbb{R}^r$,
where the action of the group $G$
is natural on the first component and trivial
on the second component
(see the commutative diagram~(\ref{eq.4.22})).
Since this diffeomorphism is $G$-equivariant, we denote the
corresponding complex structure on $G/H\times W^+$
also by $J^K_c$.
Consider the coordinates $(x_1,\dotsc,x_r)$ on $W^+$
associated with the basis $(X_1,\dotsc,X_r)$ of $\fa$,
that is, $w=x=x_1X_1+\cdots+x_rX_r$. By the $G$-invariance it suffices
to describe the operators $J^K_c$ only at the
points $(o,x)\in G/H\times W^+$, where $o=\{H\}$.
Then from~(\ref{eq.4.24}), (\ref{eq.4.26}) and the
commutative diagram~\eqref{eq.4.22} we see that
\begin{equation}\label{eq.4.27}
\begin{split}
J^K_c(o,x)(X_j,0)
&= \Big(0,\frac{\partial}{\partial x_j}\Big),
\; j=1,\dotsc ,r, \\
J^K_c(o,x) (\xi_\lambda^j,\,  0)
&=\Big(\frac{- \cosh\lambda'_x}{\sinh\lambda'_x}
\cdot\zeta_\lambda^j, \, 0\Big), \
j=1,\dotsc ,m_\lambda,\ \lambda\in\Sigma^+,
\end{split}
\end{equation}
where, recall,
$\lambda'_x=\lambda'(x)\in\mathbb{R}$.
Here
$T_o(G/H)$ is identified naturally with the space
$\fa\oplus \sum_{\lambda\in\Sigma^+}\fm_\lambda
\oplus \sum_{\lambda\in\Sigma^+}\fk_\lambda$ and,
in the first equation, we use naturally the usual
basis $\{\partial/\partial x_j\}$ of
$T_x\bbR^r$  ($W^+$ is an open subset of $\bbR^r$).

Often we will use the second relation in~(\ref{eq.4.27})
in the more general form:
\begin{equation*}
J^K_c(o,x)(\xi,  0)
=\Big(\frac{- \cos\ad_x}{\sin\ad_x}\xi,0\Big),
\quad\text{where }\xi\in\fm^+.
\end{equation*}

Let $F=F(J^K_c)$ be the
subbundle of $(1,0)$-vectors of the structure $J^K_c$
on the manifold $G/H\times W^+$.
We can substantially simplify calculations by working on the
manifold $G\times W^+$ with the subbundle
$\sF=(\pi_H\times \mathrm{id})_*^{-1}(F)$ rather than on the manifold
$G/H\times W^+$ with $F$ (see Subsection~\ref{ss.2.1}).
 From~(\ref{eq.4.27}) (see also~(\ref{eq.4.24}) and~(\ref{eq.4.26}))
it follows that the subbundle $\sF$ of $T^\mathbb{C}(G\times W^+)$
is generated by the kernel $\sH$ of the submersion
$\pi_H\times \mathrm{id}$,
\begin{equation}\label{eq.4.28}
\sH(g,x)=\{(\zeta^l(g),0),\,\zeta\in\fh\},
\quad g\in G, \;\; x\in W^+,
\end{equation}
 and the left $G$-invariant vector fields
\begin{equation}\label{eq.4.29}
\begin{split}
Z^{X_j}(g,x) &
= \Big(X_j^l(g),- \ri\, \frac{\partial}{\partial x_j}\Big),
\quad j=1,\dotsc,r,\\
\; Z^{\xi_\lambda^j}(g,x)
& =\left(\left(\frac{1}{\cosh\lambda'_x}\cdot\xi_\lambda^j-  \ri \,
\frac{-1}{\sinh\lambda'_x}\cdot\zeta_\lambda^j\right)^{l}(g) ,0\right),
\end{split}
\end{equation}
where $j=1,\dotsc,m_\lambda$, $\lambda\in\Sigma^+$, and
$\lambda'_x\eqdef\lambda'(x)\in \mathbb{R}$.

To simplify calculations in the forthcoming subsection,
we will use for the vector fields of the second family
the following more general expression,
\begin{equation}\label{eq.4.30}
Z^{\xi}(g,x)=\left(\left(R_x\xi-  \ri \,
S_x\xi\right)^{l}(g) ,0\right),
\quad\xi\in\fm^+,
\end{equation}
in terms of the two operator-functions
$R \colon W^+\to\End(\fg)$
and $S \colon W^+\to\End(\fg)$
on the set $W^+$ such that
\begin{equation}\label{eq.4.31}
\begin{split}
R_x\eta&=\frac{1}{\cos \ad_x}\, \eta
\quad\text{if}\quad\eta\in\fm^+\oplus\fk^+,\qquad
R_x\eta=0 \quad\text{if}\quad\eta\in\fa\oplus\fh, \\
S_x\eta&=\frac{-1}{\sin \ad_x}\, \eta
\quad\text{if}\quad\eta\in\fm^+\oplus\fk^+,\qquad
S_x\eta=0 \quad\text{if}\quad\eta\in\fa\oplus\fh,
\end{split}
\end{equation}
where, recall, $x=\sum_{j=1}^rx_jX_j\in W^+$.
Remark also that
$\frac{1}{\cos \ad_x}\, \eta=\eta$  if $\eta \in \fa \oplus \fh$
but $R_x\eta=0$ in this case.
Since the operator $\ad_x$ is skew-symmetric with
respect to the scalar product on $\fg$, each operator $R_x$
is symmetric and $S_x$ is skew-symmetric:
\begin{equation}\label{eq.4.32}
\langle R_x\eta_1,\eta_2 \rangle=
\langle \eta_1,R_x\eta_2 \rangle,
\ \langle S_x\eta_1,\eta_2 \rangle=
\langle \eta_1,-S_x\eta_2 \rangle,
\ x\in W^+, \ \eta_1,\eta_2\in\fg.
\end{equation}
Moreover, it is clear that for all $x\in W^+$, the restrictions
$R_x|_{\fm^+\oplus\fk^+}$ and
$S_x|_{\fm^+\oplus\fk^+}$ are nondegenerate and by Remark~\ref{re.4.5}
the following relations hold:
\begin{equation}\label{eq.4.33}
R_x(\fm_s^+)=\fm_s^+, \
R_x(\fk_s^+)=\fk_s^+, \
S_x(\fm_s^+)=\fk_s^+, \
S_x(\fk_s^+)=\fm_s^+, \
s\in\{H,\tw\}.
\end{equation}
It is clear also that
\begin{equation}\label{eq.4.34}
R_x|_{\fm_\lambda\oplus\fk_\lambda}=
\frac{1}{\cosh \lambda'_x}\Id_{\fm_\lambda\oplus\fk_\lambda},
\quad
S_x|_{\fm_\lambda\oplus\fk_\lambda}=
\frac{1}{\sinh \lambda'_x}\Id_{\fm_\lambda\oplus\fk_\lambda}
\cdot T|_{\fm_\lambda\oplus\fk_\lambda}
\end{equation}
for all $\lambda\in\Sigma^+$, and $[R_x,T]=[S_x,T]=0$ on
$\fm^+\oplus\fk^+$ for all
$x\in W^+$, where, recall that the operator
$T$ is defined by expression~(\ref{eq.4.7}).

\begin{proposition}\label{pr.4.7}
Assume that the group $G$ is semisimple.
Let $f_h\colon G/H\times W^+\to\bbC$ be a $G$-invariant $J^K_c$-harmonic function,
that is, $\partial\bar\partial f_h= 0$. Then $f_h=\mathrm{const}$.
\end{proposition}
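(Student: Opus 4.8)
The plan is to use $G$-invariance to reduce $f_h$ to a function of the Weyl-chamber coordinates, to compute $\rd(\rd f_h\circ J^K_c)$ explicitly from the description~\eqref{eq.4.27} of $J^K_c$, and then to read off from the vanishing of this $2$-form first that $f_h$ is affine and then, using semisimplicity, that its linear part is zero. Since $G$ acts transitively on the factor $G/H$ and trivially on $W^+$, a $G$-invariant function on $G/H\times W^+$ depends only on $x=(x_1,\dots,x_r)\in W^+$; write $f_h=f_h(x)$. For each $j=1,\dots,r$ let $\Theta_j$ be the $1$-form on $G/H\times W^+$ pulled back from the $G$-invariant $1$-form on $G/H$ whose value at $o=\{H\}$ is the functional $\langle X_j,\cdot\,\rangle$ on $T_o(G/H)\cong\fm\oplus\fk^+$; this is well defined because, by~\eqref{eq.4.11}, $\Ad(H)$ fixes $\fa$ pointwise and preserves the orthogonal decomposition $\fg=\fa\oplus\fh\oplus\fm^+\oplus\fk^+$. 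A direct check from~\eqref{eq.4.27} --- using that $J^K_c$ sends $(X_j,0)$ to $(0,\partial/\partial x_j)$ and maps the block $\fm^+\oplus\fk^+$ of $T_o(G/H)$ into itself --- shows that the $1$-form $V\mapsto\rd x_j(J^K_cV)$ equals $\Theta_j$ at every point.

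Consequently $\rd f_h\circ J^K_c=\sum_j(\partial f_h/\partial x_j)\,\Theta_j$, where $\rd f_h\circ J^K_c$ denotes the $1$-form $V\mapsto\rd f_h(J^K_cV)$. Recalling that $\partial\bar\partial f_h=0$ is equivalent to the closedness of $\rd f_h\circ J^K_c$, and using that $\partial f_h/\partial x_j$ is pulled back from $W^+$ while $\Theta_j$ is pulled back from $G/H$, I obtain
\[
0=\rd(\rd f_h\circ J^K_c)=\sum_{j,k}\frac{\partial^2 f_h}{\partial x_j\,\partial x_k}\,\rd x_k\wedge\Theta_j+\sum_j\frac{\partial f_h}{\partial x_j}\,\rd\Theta_j .
\]
Now I would split this identity by type: the first sum lies in $\Lambda^1T^*W^+\wedge\Lambda^1T^*(G/H)$ and the second in $\Lambda^2T^*(G/H)$, so each vanishes separately. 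The $\Theta_1,\dots,\Theta_r$ are everywhere linearly independent, hence so are the $2$-forms $\rd x_k\wedge\Theta_j$, and vanishing of the first sum forces $\partial^2 f_h/\partial x_j\partial x_k=0$ for all $j,k$; thus $f_h=a_0+\sum_j a_jx_j$ with constants $a_j\in\bbC$. The second sum then says that the $G$-invariant $1$-form $\Theta=\sum_j a_j\Theta_j$ on $G/H$ is closed; by construction $\Theta$ corresponds to the vector $a=\sum_j a_jX_j\in\fa$.

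It remains to show $a=0$. By the standard formula for the differential of a $G$-invariant $1$-form, at $o$ one has $\rd\Theta(v,w)=-\langle a,[v,w]\rangle=-\langle[a,v],w\rangle$ for $v,w\in\fm\oplus\fk^+$ (the functional $\langle a,\cdot\,\rangle$ annihilates $\fh$ since $\fa\perp\fh$). By Lemma~\ref{le.4.2} and~\eqref{eq.4.5}, $[a,\fm\oplus\fk^+]\subset\fm^+\oplus\fk^+\subset\fm\oplus\fk^+$, so $\rd\Theta=0$ forces $[a,\fm\oplus\fk^+]=0$; combined with $[a,\fa]=0$ and $[a,\fh]=0$ (by~\eqref{eq.4.11}--\eqref{eq.4.12}, $\fh$ centralizes $\fa$) this means $a$ lies in the center of $\fg$. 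As $G$ is semisimple, $a=0$, so all $a_j=0$ and $f_h\equiv a_0$.

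I expect the only real work to be the two structural points above: that the $\Theta_j$ are globally well defined and satisfy $\rd x_j\circ J^K_c=\Theta_j$, and the clean type-decomposition of $\rd(\rd f_h\circ J^K_c)$ into a ``mixed'' part and a ``horizontal'' part. Once the displayed identity is in hand, the rest --- affineness from the mixed part, and $a=0$ from the horizontal part via semisimplicity --- follows immediately from Lemma~\ref{le.4.2}. (The hypothesis that $G$ be semisimple is used precisely here: for $G$ with nontrivial center a central vector $a$ would give a non-constant pluriharmonic $G$-invariant function.)
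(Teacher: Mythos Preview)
Your proof is correct and follows essentially the same route as the paper's: reduce to a function of $x$, express the pluriharmonicity as closedness of a $1$-form built from the invariant forms dual to $X_1,\dots,X_r$, split the resulting identity by type to obtain first that $f_h$ is affine and then, via $\rd\Theta(v,w)=-\langle a,[v,w]\rangle$ together with semisimplicity, that the linear part vanishes. The only cosmetic difference is that the paper lifts everything to $G\times W^+$ via $\pi_H\times\mathrm{id}$ and works with $\bar\partial f_h$ and the left-invariant forms $\theta^{X_j}$ on $G$, whereas you work directly on $G/H\times W^+$ with $\rd f_h\circ J^K_c$ and the descended forms $\Theta_j$; this forces you to check that the $\Theta_j$ are well defined (which you do) and to argue $[a,\fa]=[a,\fh]=0$ separately at the end, while the paper gets $\langle a,[\fg,\fg]\rangle=0$ in one stroke since its test vectors range over all of $\fg$.
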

\begin{proof}
It is clear that
$$
(\pi_H\times \mathrm{id})^*(\partial\bar\partial f_h)
=(\pi_H\times \mathrm{id})^*(\rd \bar\partial f_h)
=\rd((\pi_H\times \mathrm{id})^*\bar\partial f_h)
=0.
$$
Let us calculate the 1-form
$\alpha_h=(\pi_H\times \mathrm{id})^*\bar\partial f_h$ on
$G\times W^+$. By its definition,
$$
\alpha_h|_{\sF}=0
\quad\text{and}\quad \alpha_h|_{\overline{\sF}}=
\rd\bigl(
(\pi_H\times \mathrm{id})^* f_h)\bigr)|_{\overline{\sF}}.
$$
Since the function $f_h$ is $G$-invariant,
$f_h$ is determined uniquely by some smooth function
$f\colon W^+\to \bbC$, $(x_1,\ldots,x_r) \mapsto f(x_1,\ldots,x_r)$.
Taking into account the description of the vector fields
$Z^{X_j},Z^{\xi_\lambda^j}$ generating the subbundle
$\sF$ of $T(G\times W^+)$ (see~\eqref{eq.4.29}), we obtain that
$\alpha_h=\frac12\sum_{j=1}^r\frac{\partial f}{\partial x_j}
\bigl(\ri\, \theta^{X_j}+\rd x_j\bigr)$,
where $\theta^{X_j}$ is the left $G$-invariant $1$-form on
$G$ (considered as a form on $G\times W^+$) such that
$\theta^{X_j}(e)(\xi)=\langle X_j,\xi \rangle$, $\xi\in\fg$.
But
$$
2\rd\alpha_h= \ri  \sum_{j=1}^r
\rd\Bigl(\frac{\partial f}{\partial x_j}\Bigr)\land
\theta^{X_j}
+ \ri  \sum_{j=1}^r
\frac{\partial f}{\partial x_j} \rd \theta^{X_j}
=0.
$$
It is clear that the first and second summands above vanish
(they are independent as differential two-forms on $G\times W^+$).
Since the left-invariant forms $\{\theta^{X_j}\}_{j=1}^r$
are independent on $G$,
we obtain that  $\frac{\partial f}{\partial x_j}=c_j$, $c_j\in\bbC$.
Taking into account that
$\sum_{j=1}^r c_j
 \rd \theta^{X_j}|_{e}(\xi,\eta)=-\langle \sum_{j=1}^r c_j
X_j,[\xi,\eta] \rangle$
and the algebra $\fg$ is semisimple ($[\fg,\fg]=\fg$), we obtain that
$c_j=0$ for all $j= 1, \dotsc, r$.
\end{proof}

\subsection{Invariant Ricci-flat K\"ahler metrics on $G/H\times W^+$}
\label{ss.4.3}

Let $\cK(G/H \times W^+)=\{(\bfg,\omega, J^K_c)\}$ (resp.
$\cR(G/H \times W^+)=\{(\bfg,\omega, J^K_c)\}$) be the set of all
$G$-invariant K\"ahler (resp.\ Ricci-flat K\"ahler)
structures on $G/H \times W^+$, identified also with the set
$\cK(T^+(G/K))$ (resp. $\cR(T^+(G/K))$) of all
$G$-invariant K\"ahler (resp.\ Ricci-flat K\"ahler)
structures on  the open dense subset
$T^+(G/K)$ of $T(G/K)$, associated with $J^K_c$, via the
$G$-equivariant diffeomorphism $\phi \circ f^+\colon G/H \times W^+
\to T^+(G/K)$.
Put
$$\{T_1,\dotsc,T_n\}=\{Z^{X_1},\dotsc,Z^{X_r}\}\cup
\{Z^{\xi_\lambda^j}, \,\lambda\in\Sigma^+,\,j=1, \dotsc, m_\lambda\}.
$$
\begin{theorem}
\label{th.4.8}
Let $\cK(G\times W^+)=\{\tio\}$ be the set of all
$2$-forms $\tio$ on $G\times W^+$ such  that
\begin{mlist}
\item[$(1)$]
the form $\tio$ is closed;
\item[$(2)$]
the form $\tio$ is left $G$-invariant and right $H$-invariant;
\item[$(3)$]
the kernel of $\tio$ coincides with
the subbundle $\sH\subset T(G\times W^+)$ in~{\em (\ref{eq.4.28});}
\item[$(4)$]
$\tio(T_j,T_k)=0$,\ $j,k=1,\dotsc,n;$
\item[$(5)$]
$\ri \, \tio(T,\overline{T})>0$ for each $T=\sum_{j=1}^n c_jT_j$,
where $(c_1,\ldots,c_n)\in\mathbb{C}^n\setminus\{0\}.$
\end{mlist}

Let $\cR(G\times W^+)=\{\tio\}$ be the subset of the set
$\cK(G\times W^+)=\{\tio\}$ such that for its elements $\tio$
(in addition) the following condition holds:
\begin{mlist}
\item[$(6)$]
$\det\big(\tio(T_j,\overline{T_k}) \big) =
\mathrm{const}$ on $G\times W^+.$
\end{mlist}
Then {\em(i)}
For any $2$-form $\tio\in \cK(G\times W^+)$
there exists a unique
$2$-form $\omega$ on $G/H\times W^+\cong T^+(G/K)$ such that
$(\pi_H \times \mathrm{id})^\ast \omega =\tio$.
The map $\tio\mapsto \omega$ is a one-to-one map from
$\cK(G\times W^+)$ onto $\cK(G/H\times W^+)\cong \cK(T^+(G/K))$.

{\em (ii)}  If the group $G$ is semisimple then the restriction of this map to
$\cR(G\times W^+)$
is a one-to-one map from
$\cR(G\times W^+)$ onto $\cR(G/H\times W^+)\cong \cR(T^+(G/K))$.
\end{theorem}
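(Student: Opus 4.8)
The plan is to read conditions (1)--(5) on $\tio$ as the hypotheses of Lemma~\ref{le.2.1} for the involutive subbundle $\sF=(\pi_H\times\mathrm{id})_*^{-1}\bigl(F(J^K_c)\bigr)\subset T^\bbC(G\times W^+)$, which by~\eqref{eq.4.28}--\eqref{eq.4.29} is generated by the vertical bundle $\sH$ together with the frame $T_1,\dots,T_n$; and, for part~(ii), to express the function $\cS$ of Proposition~\ref{pr.3.6} in terms of the $T_j$ and then apply Proposition~\ref{pr.4.7}. For the forward direction of part~(i): given $\tio\in\cK(G\times W^+)$, conditions (1)--(3) force $\tio=(\pi_H\times\mathrm{id})^*\omega$ for a unique $2$-form $\omega$ on $G/H\times W^+$, which is closed, $G$-invariant and non-degenerate (the real kernel of $\tio$ being $\sH$ makes that of $\omega$ vanish), hence symplectic. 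Then I would verify that $\sF$ has properties $(1)$--$(5)$ of Lemma~\ref{le.2.1} (with $\widetilde M=G\times W^+$, $K$ there replaced by $H$, and $\tilde\pi=\pi_H\times\mathrm{id}$): $H$-invariance and involutivity because $\sF$ is the $\tilde\pi_*$-preimage of the involutive bundle $F(J^K_c)$ (Subsection~\ref{ss.2.1}); $\sH^\bbC=\sF\cap\overline\sF$ because $F(J^K_c)\cap\overline{F(J^K_c)}=0$; the dimension count is immediate; $(\tilde\pi^*\omega)(\sF,\sF)=\tio(\sF,\sF)=0$ from (3) and (4), since $\sF$ is generated by $\sH$ and the $T_j$; and $\ri(\tilde\pi^*\omega)(Z,\overline Z)>0$ for $Z\in\sF\setminus\sH^\bbC$ from (5), on writing $Z\equiv\sum c_jT_j$ modulo $\sH^\bbC$ with $(c_1,\dots,c_n)\neq0$. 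Lemma~\ref{le.2.1} then yields that $F=\tilde\pi_*(\sF)=F(J^K_c)$ is a positive-definite polarization on $(G/H\times W^+,\omega)$, that is, $(\bfg,\omega,J^K_c)\in\cK(G/H\times W^+)$.

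For the converse and the bijection in part~(i): given $(\bfg,\omega,J^K_c)\in\cK(G/H\times W^+)$, set $\tio=(\pi_H\times\mathrm{id})^*\omega$. Then $\tio$ is closed, left $G$- and right $H$-invariant; its kernel is the vertical bundle $\sH$ since $\omega$ is non-degenerate and $\pi_H\times\mathrm{id}$ is a submersion; and, because the $(\pi_H\times\mathrm{id})_*T_j$ form a frame of $F(J^K_c)$ on which $\omega$ is Lagrangian with $\ri\,\omega(\cdot,\overline{\cdot})>0$, conditions (4) and (5) hold, so $\tio\in\cK(G\times W^+)$. The assignments $\tio\mapsto\omega$ and $\omega\mapsto(\pi_H\times\mathrm{id})^*\omega$ are mutually inverse because $(\pi_H\times\mathrm{id})^*$ is injective on forms, and transport along the $G$-equivariant diffeomorphism $\phi\circ f^+$ identifies $\cK(G/H\times W^+)$ with $\cK(T^+(G/K))$.

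For part~(ii), the key computation is the identity, as functions on $G\times W^+$,
\begin{equation*}
\det\bigl(\tio(T_j,\overline{T_k})\bigr)=\det\bigl((\Pi^*\omega)(Y^{\xi_j},\overline{Y^{\xi_s}})\bigr)=\widetilde\cS,
\end{equation*}
where now $\omega$ is read on $T^+(G/K)$ via $\phi\circ f^+$, $\xi_1,\dots,\xi_n$ is the orthonormal basis $X_1,\dots,X_r,\ \xi_\lambda^j$ of $\fm$, and $\widetilde\cS$ is the function of Proposition~\ref{pr.3.6}. This follows because diagram~\eqref{eq.4.22} gives $\Pi|_{G\times W^+}=(\phi\circ f^+)\circ(\pi_H\times\mathrm{id})$, whence $\Pi^*\omega=\tio$ on $G\times W^+$, together with the observations that $Y^{X_j}$ restricts to $Z^{X_j}=T_j$ and that $\Pi_*(Y^{\xi_\lambda^j})=\Pi_*(Z^{\xi_\lambda^j})$ by~\eqref{eq.4.26}. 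Next, the argument of Proposition~\ref{pr.3.6} is local and goes through verbatim over the open dense subset $T^+(G/K)$ (passing to the cover $T(G/K_0)$ when $K$ is disconnected), giving $\Ric(\bfg)=-\ri\,\partial\bar\partial\ln\cS$ on $G/H\times W^+$ for the $G$-invariant function $\cS$ with $(\pi_H\times\mathrm{id})^*\cS=\det(\tio(T_j,\overline{T_k}))$; this $\cS$ is nowhere zero because (5) makes $\ri(\tio(T_j,\overline{T_k}))$ positive-definite Hermitian, so $\ln\cS$ is a well-defined $G$-invariant function. Hence $\omega$ is Ricci-flat iff $\partial\bar\partial\ln\cS=0$; since $G$ is semisimple, Proposition~\ref{pr.4.7} upgrades this to $\cS=\mathrm{const}$, which is exactly condition (6) for $\tio$; and conversely (6) gives $\cS=\mathrm{const}$, hence $\Ric(\bfg)=0$. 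Thus the bijection of part~(i) restricts to one between $\cR(G\times W^+)$ and $\cR(G/H\times W^+)\cong\cR(T^+(G/K))$.

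I expect the main obstacle to be that determinant identity: one must match the slice-adapted frame $T_1,\dots,T_n=\{Z^{X_j}\}\cup\{Z^{\xi_\lambda^j}\}$ on $G\times W^+$ with the globally defined holomorphic data $Y^{\xi_1},\dots,Y^{\xi_n}$ on $G\times\fm$ entering $\widetilde\cS$, which is exactly what~\eqref{eq.4.26} and diagram~\eqref{eq.4.22} are arranged to provide. Everything else in part~(i) is bookkeeping with Lemma~\ref{le.2.1}; the only other essential input is the rigidity of Proposition~\ref{pr.4.7}, without which Ricci-flatness would merely say that $\ln\cS$ is pluriharmonic rather than constant.
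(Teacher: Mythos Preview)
Your proposal is correct and follows essentially the same approach as the paper's proof: part~(i) is reduced to Lemma~\ref{le.2.1} applied to $\sF=(\pi_H\times\mathrm{id})_*^{-1}(F(J^K_c))$, and part~(ii) combines Proposition~\ref{pr.3.6} with Proposition~\ref{pr.4.7}, the link being exactly your determinant identity $\det(\tio(T_j,\overline{T_k}))=\widetilde\cS|_{G\times W^+}$, which the paper also establishes via diagram~\eqref{eq.4.22} and the observation that $Z^\xi-Y^\xi\in\ker\Pi_*$ (your use of~\eqref{eq.4.26}). Your write-up is somewhat more explicit in verifying the hypotheses of Lemma~\ref{le.2.1} and in noting that $\cS$ is nowhere zero, but the architecture is identical.
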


\begin{proof}
From $(1)\!\!-\!\!(3)$, $\omega$ becomes into a
$G$-invariant symplectic structure on $G/H\times W^+$.
Then item (i) of the theorem follows from
Lemma~\ref{le.2.1}, using $(4)$ and $(5)$ and taking
$\sF = (\pi_H \times \mathrm{id})^{-1}_\ast(F)$, where $F = F(J^K_c)$
is the subbundle of $(1,0)$-vectors of the structure $J^K_c$
on the manifold $G/H \times W^+$.
To prove assertion (ii) of the
theorem, let $\omega\in \cK(G/H\times W^+)$ and
$\tio=(\pi_H \times \mathrm{id})^\ast \omega $.
By Proposition~\ref{pr.3.6},
the form $\underline{\omega}=((\phi\circ f^+)^{-1})^*
\omega\in \cR(T^+(G/K))$ if and only if the
$G$-invariant function
$\ln \cS$ ($\cS=\cS(\underline{\omega})$, see~(\ref{eq.3.20})) on
$T^+(G/K)\cong G/H\times W^+$ is a
$J^K_c$-harmonic function. In this case, by
Proposition~\ref{pr.4.7},
$\cS=\mathrm{const}$. Now to complete the proof of the theorem
it is sufficient to remark that, by~(\ref{eq.3.19}),
$\Pi^*\cS= \det \Bigl((\Pi^\ast \underline{\omega})\big( Y^{\xi_j},
\overline{Y^{\xi_{k}}}\bigr)\Bigr)$;
by the commutative diagram~\eqref{eq.4.22},
$(\Pi^\ast\underline{\omega})|_{G\times W^+}=\tio$;
and by the definition
of the vector fields
$Z^{\xi}$, $\xi\in\fm$, the difference
$Z^\xi-Y^\xi$ belongs to the kernel of the tangent
map $\Pi_*$:
\begin{equation*}
\begin{split}
\mathrm{const}=\det\Bigl(\!(\Pi^\ast\underline{\omega})\big( Y^{\xi_j},
\overline{Y^{\xi_{k}}}\bigr)|_{G\times W^+}\!\Bigr)
&=\det\Bigl(\!(\Pi^\ast\underline{\omega})\big( Z^{\xi_j},
\overline{Z^{\xi_{k}}}\bigr)|_{G\times W^+}\!\Bigr) \\
&=\det\Bigl(\tio\big( Z^{\xi_j},
\overline{Z^{\xi_{k}}}\bigr)\!\Bigr). \qedhere
\end{split}
\end{equation*}
\end{proof}

\begin{remark}\label{re.4.9} \em
Note that condition $(5)$
of the previous theorem is equivalent
to the following condition: the Hermitian
matrix-function $\mathbf{w}$ on
$W^+$ with entries $w_{jk} (x)
=\ri \tio(T_j,\overline{T_k})(e,x)$, $j,k=1,\dotsc,n$,
is positive-definite.
\end{remark}

\begin{corollary}\label{co.4.10}
Let $\omega\in\cK(G/H\times W^+)$ and
$\tio=(\pi_H \times \mathrm{id})^\ast \omega $. Then the form
$\underline{\omega}=((\phi\circ f^+)^{-1})^*\omega\in \cK(T^+(G/K))$.
Suppose that there exists a smooth form (extension)
$\underline{\omega}_0$ on the whole tangent bundle $T(G/K)$
such that $\underline{\omega}_0=\underline{\omega}$ on
$T^+(G/K)$. Then the form $\underline{\omega}_0$ determines
a $G$-invariant K\"ahler structure
on $T(G/K)$ (associated
to the canonical complex structure $J^K_c$) if and only if
for each limit point $\overline{x}\in \overline{W^+}\setminus W^+\subset\fa$
and some sequence $x_m\in W^+$, $m\in\mathbb{N}$,
such that $\lim_{m\to\infty}x_m=\overline{x}$,
the Hermitian matrix $\mathbf{w}(\overline{x})$ with entries
$w_{jk}(\overline{x})=\lim_{m\to\infty} w_{jk}(x_m)=
\lim_{m\to\infty} \ri \tio(T_j,\overline{T_k})(e,x_m) $, $j,k=1,\dotsc,n$,
is positive-definite.
\end{corollary}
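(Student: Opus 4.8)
The plan is to reduce the assertion to a positivity check along the boundary $\overline{W^+}\setminus W^+$, using that on the dense open $G$-invariant set $T^+(G/K)$ the extension $\underline{\omega}_0$ equals the K\"ahler form $\underline{\omega}=((\phi\circ f^+)^{-1})^*\omega$, which lies in $\cK(T^+(G/K))$ by Theorem~\ref{th.4.8}(i) (since $\omega\in\cK(G/H\times W^+)$). First I would note that, of the defining properties of a $G$-invariant K\"ahler structure $(\bfg_0,\underline{\omega}_0,J^K_c)$ on $T(G/K)$ --- namely $G$-invariance of $\underline{\omega}_0$, $\rd\underline{\omega}_0=0$, the type-$(1,1)$ condition $\underline{\omega}_0(F,F)=0$ with $F=F(J^K_c)$ (equivalently, symmetry of $\bfg_0:=\underline{\omega}_0(J^K_c\,\cdot\,,\,\cdot\,)$), and positive-definiteness of $\bfg_0$ --- all but the last are automatic: each is either a pointwise condition ($G$-invariance, type $(1,1)$) or follows by continuity ($\rd\underline{\omega}_0$ vanishes on the dense set $T^+(G/K)$), so each holds on all of $T(G/K)$; moreover positive-definiteness of $\bfg_0$ already forces $\underline{\omega}_0$ to be nondegenerate, hence symplectic. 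Thus $\underline{\omega}_0$ defines a $G$-invariant K\"ahler structure if and only if $\bfg_0>0$ at every point, equivalently (by the characterization of positive-definite polarizations recalled in Subsection~\ref{ss.2.1}) $\ri\,(\underline{\omega}_0)_p(Z,\overline{Z})>0$ for every $p$ and every $Z\in F_p(J^K_c)\setminus\{0\}$; this already holds on $T^+(G/K)$, so only the singular points need checking.

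Next I would localize those points. Since $\fm=\Ad(K)(\overline{W^+})$, every $p\in T(G/K)$ has the form $p=\Pi(g,\Ad_k\overline{x})=(gk)\cdot\Pi(e,\overline{x})$ with $\overline{x}\in\overline{W^+}$, and $p\notin T^+(G/K)$ forces $\overline{x}\in\overline{W^+}\setminus W^+$; as $\underline{\omega}_0$, $J^K_c$ and the singular set are $G$-invariant, $\bfg_0>0$ at all singular points is equivalent to $\bfg_0>0$ at all $\Pi(e,\overline{x})$, $\overline{x}\in\overline{W^+}\setminus W^+$. At such a point I would work with the local $(1,0)$ vector fields $Y^{\xi_1}_O,\dots,Y^{\xi_n}_O$ of Lemma~\ref{le.3.5}, defined on the open set $O\supset\{\Pi(e,w):w\in\fm\}$, the $\xi_j$ being the orthonormal basis of $\fm$ from~\eqref{eq.4.21}. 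Two facts matter. (a)~By Lemma~\ref{le.3.5} and~\eqref{eq.3.15}, the $Y^\xi$ ($\xi\in\fm$) are smooth on all of $G\times\fm$: unlike the $Z^\xi$ of~\eqref{eq.4.29}, which have a pole at each wall, the operator-functions $\tfrac1{\cos\ad_w}$, $\tfrac{\cos\ad_w-1}{\sin\ad_w}$, $\tfrac{\ad_w\cos\ad_w}{\sin\ad_w}$ occurring in $Y^\xi$ restrict on every $\fm_\lambda\oplus\fk_\lambda$ to the entire functions $\tfrac1{\cosh\lambda'_w}$, $-\tanh\tfrac{\lambda'_w}{2}$, $\lambda'_w\coth\lambda'_w$, because there $\ad_w=\lambda'_w T$ with $T^2=-\Id$; hence $Y^{\xi_j}_O$ is smooth on $O$, in particular at $\Pi(e,\overline{x})$. (b)~By~\eqref{eq.3.14}, $\Theta_h(Y^{\xi_1}_O,\dots,Y^{\xi_n}_O)$ is a nonzero constant on $O$ when $K$ is connected, so --- using for disconnected $K$ the holomorphic covering $\psi\colon T(G/K_0)\to T(G/K)$ and Remark~\ref{re.3.4} exactly as in the proof of Proposition~\ref{pr.3.6} --- $Y^{\xi_1}_O(p),\dots,Y^{\xi_n}_O(p)$ is a basis of $F_p(J^K_c)$ at every $p\in O$, the singular points included.

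Then I would identify the Hermitian matrix in question. For $x\in W^+$ one has $\Pi(G\times W^+)\subset T^+(G/K)$, so $\Pi^*\underline{\omega}_0=\Pi^*\underline{\omega}$ there; using the commutative diagram~\eqref{eq.4.22} and $Y^{\xi_j}-Z^{\xi_j}\in\ker\Pi_*$ (as in the proof of Theorem~\ref{th.4.8}),
\[
\ri\,(\underline{\omega}_0)_{\Pi(e,x)}\bigl(Y^{\xi_j}_O,\overline{Y^{\xi_k}_O}\bigr)
=\ri\,(\Pi^*\underline{\omega}_0)_{(e,x)}\bigl(Y^{\xi_j},\overline{Y^{\xi_k}}\bigr)
=\ri\,\tio_{(e,x)}\bigl(Z^{\xi_j},\overline{Z^{\xi_k}}\bigr)=w_{jk}(x),
\]
with $w_{jk}$ as in Remark~\ref{re.4.9}. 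The left-hand side is a smooth function of $p=\Pi(e,x)\in O$ (since $\underline{\omega}_0$ is smooth on $T(G/K)$ and the $Y^{\xi_j}_O$ on $O$); hence for any sequence $x_m\to\overline{x}$ the limit $w_{jk}(\overline{x})=\lim_m w_{jk}(x_m)$ exists, is independent of the sequence, and equals $\ri\,(\underline{\omega}_0)_{\Pi(e,\overline{x})}(Y^{\xi_j}_O,\overline{Y^{\xi_k}_O})$. Since $\{Y^{\xi_j}_O(\Pi(e,\overline{x}))\}$ is a basis of $F_{\Pi(e,\overline{x})}(J^K_c)$, the form $Z\mapsto\ri\,(\underline{\omega}_0)_{\Pi(e,\overline{x})}(Z,\overline{Z})$ on that fibre is positive-definite exactly when $\mathbf{w}(\overline{x})=(w_{jk}(\overline{x}))$ is. Combining with the first two paragraphs gives: $\underline{\omega}_0$ defines a $G$-invariant K\"ahler structure $\iff$ $\bfg_0>0$ at each $\Pi(e,\overline{x})$, $\overline{x}\in\overline{W^+}\setminus W^+$ $\iff$ each $\mathbf{w}(\overline{x})$ is positive-definite, which is the claimed equivalence.

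I expect the main obstacle to be facts (a) and (b): making $Y^{\xi_1}_O,\dots,Y^{\xi_n}_O$ into a \emph{smooth} holomorphic frame of $F(J^K_c)$ that survives across the singular locus. Smoothness fails for the natural fields $Z^{\xi_\lambda^j}$, so the passage to the $Y^\xi$ is essential; and \eqref{eq.3.14}, together with its extension to disconnected $K$ via the covering of Remark~\ref{re.3.4}, is what guarantees that this frame stays nondegenerate on all of $O$, so that the boundary limits $w_{jk}(\overline{x})$ are at once well-defined and equal to $\ri\,\underline{\omega}_0$ evaluated on a genuine basis of $(1,0)$-vectors. Once this is in hand, both implications follow from the elementary fact that a Hermitian form on a complex vector space is positive-definite iff its matrix in one (hence every) basis is.
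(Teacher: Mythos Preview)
Your proof is correct and follows essentially the same route as the paper's: reduce via $G$-invariance and $\Ad(K)(\overline{W^+})=\fm$ to the points $\Pi(e,\overline{x})$, use the smooth fields $Y^{\xi_j}$ (rather than the singular $Z^{\xi_j}$) there, and exploit $Y^{\xi_j}-Z^{\xi_j}\in\ker\Pi_*$ on $G\times W^+$ to identify the limit matrix with $\mathbf{w}(\overline{x})$. You are in fact more careful than the paper on one point it takes for granted, namely that the $Y^{\xi_j}_O$ remain a \emph{basis} of $F(J^K_c)$ across the singular locus, which you extract from~\eqref{eq.3.14} (and its extension to disconnected $K$ via the covering of Remark~\ref{re.3.4}).
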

\begin{proof}
The form $\tio$ on $G\times W^+$ is left $G$-invariant,
right $H$-invariant and $\ker\tio=\sH$.
By the commutative diagram~\eqref{eq.4.22} there exists
a unique 2-form $\underline{\tio}$ on the space $G\times\fm^R$
which is left $G$-invariant,
right $K$-invariant, $\ker\underline{\tio}=\sK|_{G\times\fm^R}$,
$\underline{\tio}|_{G\times W^+}=\tio$ and
$\Pi^* \underline{\omega}=\underline{\tio}$.
Here, recall, ${\sK}\subset T(G\times {\mathfrak m})$ is
the kernel of the tangent map
$\pi_{*} \colon  T(G\times\fm)\to T(G\times_K\fm)$.
By Lemma~\ref{le.2.1}, Item $(5)$,
the form (extension) $\underline{\omega}_0$
determines a $G$-invariant K\"ahler structure
on $T(G/K)$ if and only if
the Hermitian matrix $\mathbf{v}(x)$ for each
$x\in\fm\setminus\fm^R$ with entries
$v_{jk}(x)=\ri (\Pi^* \underline{\omega}_0)
\big( Y^{\xi_j},\overline{Y^{\xi_{k}}}\bigr)(e,x)$
is positive-definite. Remark that $\Ad(K)(\overline{W^+})=\fm$
and $\Ad(K)({W^+})=\fm^R$.
Since the form $\Pi^* \underline{\omega}_0$
on $G\times\fm$ is smooth,
$$
v_{jk}(\overline{x})=\lim_{m\to\infty} v_{jk}(x_m)=
\lim_{m\to\infty} \ri (\Pi^* \underline{\omega}_0)
\big( Y^{\xi_j},\overline{Y^{\xi_{k}}}\bigr)(e,x_m).
$$
But as we remark above, at each point $(e,x_m)\in G\times W^+$
$$
(\Pi^* \underline{\omega}_0)(e,x_m)=
(\Pi^* \underline{\omega})(e,x_m)=\underline{\tio}(e,x_m)
=\tio(e,x_m)
$$
restricted to the subspace $T_{(e,x_m)}(G\times W^+)\subset
T_{(e,x_m)}(G\times \fm)$.
Taking into account again (as in the proof of Theorem~\ref{th.4.8})
that the difference $(Z^\xi-Y^\xi)(e,x_m)$, $\xi\in\fm$,
belongs to the kernel of the tangent
map $\Pi_*(e,x_m)$ we obtain that
$v_{jk}(\overline{x})=\lim_{m\to\infty} \ri \tio
\big( Z^{\xi_j},\overline{Z^{\xi_{k}}}\bigr)(e,x_m) $.
Now all the other required properties of the
form $\underline{\omega}_0$ follow by continuity.
\end{proof}

\section{Description of the space $\cR(G\times W^+)$}
\label{s.5}

For any vector $a\in \fg$, denote by $\theta^a$ the left
$G$-invariant 1-form on the group $G$ such that
$\theta^a(\xi^{l})=\langle a,\xi \rangle$.
Since $r_g^*\theta^a=\theta^{\Ad_g a}$,
where $g\in G$, the form $\theta^a$ is
right $H$-invariant if and only if
$\Ad_h a=a$ for all $h\in H\subset G$.
Because
\begin{equation}\label{eq.5.1}
\rd\theta^a(\xi^{l},\eta^{l})
=-\theta^a([\xi^{l},\eta^{l}])
=-\langle a,[\xi,\eta] \rangle,
\end{equation}
the $G$-invariant form $\omega^a$ on $G$,
\begin{equation}\label{eq.5.2}
\omega^a(\xi^{l},\eta^{l})\eqdef\langle a,[\xi,\eta] \rangle,
\quad
\xi,\eta\in\fg,
\end{equation}
is a closed $2$-form on $G$.

Let $\mathrm{pr}_1 \colon G\times W^+\to G$ and
$\mathrm{pr}_2 \colon G\times W^+\to W^+$
be the natural projections. Choosing some orthonormal basis
$\{e_1, \dotsc, e_N\}$ of the Lie algebra
$\fg$, where $e_j=X_j$, $j=1,\dotsc,r$, put
$\tith^{e_k}\eqdef \mathrm{pr} _1^*(\theta^{e_k})$ and
$\tio^{e_k}\eqdef \mathrm{pr} _1^*(\omega^{e_k})$.
For any vector-function $\mathbf{a} \colon W^+\to\fg$,
$\mathbf{a}(x)=\sum_{k=1}^{N} a^k(x)e_k$, denote by
$\tith^{\mathbf{a}}$ (resp.\ $\tio^{\mathbf{a}}$)
the $G$-invariant 1-form $\sum_{k=1}^{N}a^k \cdot\tith^{e_k}$
(resp.\ 2-form $\sum_{k=1}^{N}a^k \cdot \tio^{e_k}$).
Then we have

\begin{theorem}
\label{th.5.1}
Let $\tio$ be a $2$-form belonging to
$\cK(G\times W^+)$, where the compact Lie group
$G$ is semisimple. Then there exists a unique (up to a real
constant) smooth function
$f \colon W^+\to \mathbb{R}$,
$x\mapsto f(x)$,
and a unique smooth vector-function
$\mathbf{a} \colon W^+\to \fg_H$ given by
\begin{equation}\label{eq.5.3}
\begin{split}
& \mathbf{a}(x) =\sum_{j=1}^{r}\frac{\partial f}{\partial x_j}(x) X_j
+z_\fh+a^\fk(x)+a^\fm(x), \ \text{where}
\quad z_\fh\in\fz(\fh), \;\;\\
& a^\fk(x)=\sum_{\lambda\in\Sigma_H\cap\Sigma^+}
\tfrac{c^\fk_\lambda}{\cosh \lambda'(x)}\zeta^{1}_\lambda
\in \fk_H^+, \;\;
a^\fm(x)=\sum_{\lambda\in\Sigma_{H}\cap\Sigma^+}
\tfrac{c^\fm_\lambda}{\sinh \lambda'(x)}\xi^{1}_\lambda
\in \fm_H^+,
\end{split}
\end{equation}
$c^\fm_\lambda, c^\fk_\lambda\in\bbR$, such that $\tio$ is the
exact form expressed
in terms of $\mathbf a$ as
\begin{equation}\label{eq.5.4}
\tio=\rd \tith^{\mathbf{a}}
=\sum_{j=1}^r  \rd x_j \land \tith^{\mathbf{a}_{[j]}}  -\tio^{\mathbf{a}},
\quad \text{where}\quad
\mathbf{a}_{[j]}=\frac{\partial \mathbf{a}}{\partial x_j}.
\end{equation}
Moreover, for all points $x\in W^+$, the following conditions
$(1)\!\!-\!\!(3)$ hold:
\begin{mlist}
\item
[$(1)$] the components $a^\fk(x)+z_\fh$ and $a^\fm(x)$ of
the vector-function ${\mathbf a}(x)$ in~\emph{(\ref{eq.5.3})}
satisfy the following commutation relations
\begin{equation}\label{eq.5.5}
\begin{split}
\bigl(R_x \cdot \ad_{a^\fk(x)} \cdot R_x
+S_x \cdot \ad_{a^\fk(x)} \cdot S_x
+ (R^2_x + S^2_x)\ad_{z_\fh}\big)(\fm^+) & =0, \\
\bigl(R_x \cdot \ad_{a^\fm(x)} \cdot S_x-
S_x \cdot \ad_{a^\fm(x)} \cdot R_x\bigr)(\fm^+) & =0;
\end{split}
\end{equation}
$z_\fh=0$ if $a^\fk(x)\equiv 0$ and $G/K$
is an irreducible Riemannian symmetric space;
\item
[$(2)$]
the Hermitian $p\times p$-matrix-function
${\mathbf w}_H(x)=\bigl(w_{k|j}(x)\bigr)$, $p=\dim\fm_H=\dim\fa+
\card(\Sigma_H\cap\Sigma^+)$,
with indices $k,j\in\{1,\dotsc,r\}
\cup\{{}_\lambda^{1},\, \lambda\in \Sigma_H\cap\Sigma^+\}$
and entries
\begin{equation*}
\begin{split}
w_{k|j}(x)&=2\frac{\partial^2 f}{\partial x_k\partial x_j}(x),
\quad k,j\in\{1,\ldots,r\}, \\
w_{k|{}_\lambda^{1}}(x)
&=2\lambda'(X_k)\Bigl(\ri  \dfrac{c^\fk_\lambda}{\cosh^2 \lambda'_x}
-\dfrac{c^\fm_\lambda}{\sinh^2 \lambda'_x} \Bigr), \
k\in\{1,\ldots,r\}, \;\lambda\in \Sigma_H\cap\Sigma^+, \\
w_{{}_\lambda^{1}|{}_\mu^{1}}(x), & \quad
\lambda,\mu\in \Sigma_H\cap\Sigma^+,  \quad
\text{determined by}~(\ref{eq.5.6}),
\end{split}
\end{equation*}
 is positive-definite;
\item
[$(3)$]
if $\fm_{\tw}^+\ne 0$ then
the Hermitian $s\times s$-matrix
${\mathbf w}_{\tw}(x)=\bigl(w_{{}_\lambda^j|{}_\mu^k}(x)\bigr)$,
$s=\dim\fm_{\tw}^+= \Sigma_{\lambda \in \Sigma^+ \setminus
\Sigma_H,} m_{\lambda}$,
with indices ${}_\lambda^j,\, {}_\mu^k\in
\{{}_\lambda^j,\, \lambda\in\Sigma^+\setminus \Sigma_H,
j=1,\ldots,m_\lambda\}$ and entries
\begin{align}\label{eq.5.6}
w_{{}_\lambda^j|{}_\mu^k}(x)
& = -\frac{2\ri}{\sinh \lambda'_x \sinh \mu'_x}\langle
(\ad_{a^\fk(x)+z_\fh})\zeta^j_\lambda, \, \zeta^k_\mu\rangle \\
& \quad - \frac{2}{\cosh \lambda'_x \sinh \mu'_x}\langle
(\ad_{a^\fa(x)+a^\fm(x)})\xi^j_\lambda, \, \zeta^k_\mu\rangle  \notag
\end{align}
is positive-definite.
\end{mlist}
If in addition
\begin{mlist}
\item
[$(4)$] either $\det {\mathbf w}_H(x)\cdot
\det {\mathbf w}_{\tw}(x) = \mathrm{const}$
when $\fm^+_{\tw}\ne0$ or  $\det {\mathbf w}_H(x)\equiv
\mathrm{const}$ otherwise,
\end{mlist}
then $\widetilde\omega \in \cR(G \times W^+)$.

Conversely, any $2$-form as in~\emph{(\ref{eq.5.4})}
determined by a vector-function
$\mathbf{a} \colon W^+\to \fg_H$ as in~\emph{(\ref{eq.5.3})}
for which conditions $(1)\!\!-\!\!(3)$ hold, belongs to
$\cK(G\times W^+)$ and if in addition $(4)$ holds,
it belongs to $\cR(G\times W^+)$.
\end{theorem}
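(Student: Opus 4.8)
I will treat both implications together, starting from an arbitrary $\tio\in\cK(G\times W^+)$. Since $G$ is compact and semisimple, $H^1(G;\bbR)=H^2(G;\bbR)=0$, and $W^+$ is convex; hence $H^1(G\times W^+;\bbR)=H^2(G\times W^+;\bbR)=0$, so $\tio=\rd\beta$ for some $1$-form $\beta$. Averaging $\beta$ over the compact group acting by left $G$- and right $H$-translations (under which $\tio$ is invariant by $(2)$), I may take $\beta$ left $G$-invariant and right $H$-invariant, and then $\beta=\tith^{\mathbf{a}}+\gamma$ with $\mathbf{a}\colon W^+\to\fg$ smooth and $\gamma$ a $1$-form pulled back from $W^+$; right $H$-invariance forces $\mathbf{a}(x)\in\fg_H$ for all $x$. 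Using $\rd\theta^a(\xi^l,\eta^l)=-\langle a,[\xi,\eta]\rangle$ (see \eqref{eq.5.1}, \eqref{eq.5.2}) one computes $\rd\tith^{\mathbf{a}}=\sum_j\rd x_j\land\tith^{\mathbf{a}_{[j]}}-\tio^{\mathbf{a}}$, which has no $\rd x\land\rd x$-component; thus the $\rd x\land\rd x$-part of $\tio=\rd\tith^{\mathbf{a}}+\rd\gamma$ is $\rd\gamma$, and the real part of condition $(4)$ on the pair $(Z^{X_j},Z^{X_k})$, which reads $\tio(X_j^l,X_k^l)=\tio(\partial/\partial x_j,\partial/\partial x_k)$ with left side $0$ (as $[\fa,\fa]=0$), forces $\rd\gamma=0$. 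Hence $\gamma$ is exact and absorbed into the primitive, so $\tio=\rd\tith^{\mathbf{a}}$, which is \eqref{eq.5.4}; and $\mathbf{a}$ is unique because a closed $\tith^{\mathbf{b}}$ with $G$ semisimple forces $\mathbf{b}=0$.

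\textbf{Step 2 (shape of $\mathbf{a}$ and the relations \eqref{eq.5.5}).} Decompose $\mathbf{a}=a^\fa+a^\fm+a^\fk+z_\fh$ along $\fg_H=\fa\oplus\fm_H^+\oplus\fk_H^+\oplus\fz(\fh)$ (Remark~\ref{re.4.5}), and read off the pieces from the remaining defining properties using $\tio=\rd\tith^{\mathbf{a}}$, \eqref{eq.4.6} and \eqref{eq.4.29}. Condition $(3)$, applied to $\tio((\zeta^l,0),\cdot)$ for $\zeta\in\fh$, forces $\langle\mathbf{a}(x),\zeta\rangle$ to be independent of $x$, i.e. $z_\fh$ constant. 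Condition $(4)$ on $(Z^{X_j},Z^{X_k})$ gives $\partial_j\langle\mathbf{a},X_k\rangle=\partial_k\langle\mathbf{a},X_j\rangle$, so $a^\fa=\sum_j(\partial f/\partial x_j)X_j$ for an $f\colon W^+\to\bbR$ unique up to a constant. Condition $(4)$ on $(Z^{X_j},Z^{\xi_\lambda^k})$ is automatic for $\lambda\in\Sigma^+\setminus\Sigma_H$ (there $\xi_\lambda^k,\zeta_\lambda^k\perp\fg_H$), and for $\lambda\in\Sigma_H\cap\Sigma^+$ splits into the linear ODEs $\partial_j\langle\mathbf{a},\xi_\lambda^1\rangle=-\lambda'(X_j)\coth\lambda'_x\,\langle\mathbf{a},\xi_\lambda^1\rangle$ and $\partial_j\langle\mathbf{a},\zeta_\lambda^1\rangle=-\lambda'(X_j)\tanh\lambda'_x\,\langle\mathbf{a},\zeta_\lambda^1\rangle$, solved by $c^\fm_\lambda/\sinh\lambda'(x)$ and $c^\fk_\lambda/\cosh\lambda'(x)$; altogether this is \eqref{eq.5.3}. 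Finally, condition $(4)$ on $(Z^{\xi_\lambda^j},Z^{\xi_\mu^k})$ reduces, these fields being horizontal over $W^+$, to $\tio(Z^{\xi_\lambda^j},Z^{\xi_\mu^k})=-\langle\mathbf{a},[R_x\xi_\lambda^j-\ri S_x\xi_\lambda^j,\,R_x\xi_\mu^k-\ri S_x\xi_\mu^k]\rangle$; expanding by $\bbC$-bilinearity and using the symmetry of $R_x$ and the skew-symmetry of $S_x$ \eqref{eq.4.32}, the identities $[R_x,T]=[S_x,T]=0$, the commuting of $\ad_{z_\fh}$ (with $z_\fh\in\ft_0$) with $R_x,S_x,T$ by \eqref{eq.4.15}, and the identical cancellation of the $a^\fa$-contribution to the imaginary part by \eqref{eq.4.6}--\eqref{eq.4.7}, the vanishing of the real and imaginary parts becomes exactly the two identities in \eqref{eq.5.5}. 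When $a^\fk\equiv0$, the first identity reads $(R_x^2+S_x^2)\ad_{z_\fh}(\fm^+)=0$; since $R_x^2+S_x^2$ is invertible on each $\fm_\lambda$, $\ad_{z_\fh}$ annihilates $\fm^+$, hence (via $\zeta_\lambda=-T\xi_\lambda$, $[\ad_{z_\fh},T]=0$, $[z_\fh,\fa]=[z_\fh,\fh]=0$) all of $\fg$, giving $z_\fh=0$.

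\textbf{Step 3 (positivity, Ricci-flatness, converse).} By Remark~\ref{re.4.9}, condition $(5)$ amounts to positive-definiteness of the Hermitian matrix $\mathbf{w}(x)$ with $w_{jk}(x)=\ri\,\tio(T_j,\overline{T_k})(e,x)$. I show $\mathbf{w}=\mathbf{w}_H\oplus\mathbf{w}_\tw$ is block-diagonal: an entry pairing an $\fm_H$-index ($X_k$ or ${}_\mu^1$ with $\mu\in\Sigma_H$) to an $\fm^+_\tw$-index (${}_\lambda^j$ with $\lambda\in\Sigma^+\setminus\Sigma_H$) vanishes, because its $\rd x$-part is zero ($\partial_k\mathbf{a}\in\fg_H\perp(\fm^+_\tw\oplus\fk^+_\tw)$) and the remainder is $-\langle\mathbf{a},[\cdot,\cdot]\rangle$ with the bracket inside $\fm^+_\tw\oplus\fk^+_\tw$ by Remark~\ref{re.4.5}, hence orthogonal to $\mathbf{a}\in\fg_H$. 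So $(5)$ is equivalent to $(2)$ and $(3)$ together, and a direct computation of $\ri\,\tio(T_j,\overline{T_k})$ from $\tio=\rd\tith^{\mathbf{a}}$ and \eqref{eq.5.3} produces the stated entries of $\mathbf{w}_H$ and \eqref{eq.5.6}. Since $\det(\tio(T_j,\overline{T_k}))=(-\ri)^n\det\mathbf{w}_H\cdot\det\mathbf{w}_\tw$, Theorem~\ref{th.4.8} and Proposition~\ref{pr.3.6} identify condition $(6)$ with condition $(4)$. Conversely, given $\mathbf{a}$ as in \eqref{eq.5.3} satisfying $(1)$--$(3)$, set $\tio=\rd\tith^{\mathbf{a}}$: it is closed and left $G$-, right $H$-invariant (as $\mathbf{a}$ is $\fg_H$-valued); $\sH\subseteq\ker\tio$ is a direct check from \eqref{eq.5.3}, while $\ker\tio\subseteq\sH$ follows since $\{T_j\}\cup\{\overline{T_k}\}\cup\sH^{\bbC}$ spans $T^{\bbC}_{(e,x)}(G\times W^+)$ (by \eqref{eq.4.24}, \eqref{eq.4.25} and a dimension count) and $\mathbf{w}$ is nondegenerate by $(2)$--$(3)$; $\tio(T_j,T_k)=0$ holds by reading Step~2 backwards; and positivity is $(2)$--$(3)$. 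Hence $\tio\in\cK(G\times W^+)$, and $\tio\in\cR(G\times W^+)$ if in addition $(4)$ holds.

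\textbf{Main obstacle.} I expect the bulk of the labor, and the genuinely delicate part, to be the systematic evaluation of the forms $\tio(T_j,T_k)$ and $\tio(T_j,\overline{T_k})$ via the restricted-root decomposition and the operators $R_x,S_x,T$: in particular, verifying that the $(Z^{\xi_\lambda^j},Z^{\xi_\mu^k})$-conditions are \emph{exactly} \eqref{eq.5.5} --- the clean separation into $\fk$- and $\fm$-parts and the automatic cancellation of the $a^\fa$-term, with no hidden extra constraint --- that $\mathbf{w}$ is genuinely block-diagonal, and, on the converse side, the inclusion $\ker\tio\subseteq\sH$, which needs both the spanning statement and positivity.
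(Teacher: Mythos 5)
Your proposal is correct and, after the opening step, follows essentially the same route as the paper: reduce $\tio$ to the exact form $\rd\tith^{\mathbf a}$ with $\mathbf a$ valued in $\fg_H$, extract $a^\fa=\sum_j\partial_jf\,X_j$ and the $\cosh^{-1},\sinh^{-1}$ profiles of $a^\fk,a^\fm$ from the Lagrangian conditions on the $Z$-fields, obtain \eqref{eq.5.5} from $\tio(Z^\xi,Z^\eta)=0$ using $R_x,S_x,T$ and Remark~\ref{re.4.5}, and identify conditions $(2)$--$(4)$ with conditions $(5)$--$(6)$ of Theorem~\ref{th.4.8} via the block-diagonal Hermitian matrix $\mathbf w=\mathbf w_H\oplus\mathbf w_\tw$. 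The one genuine difference is the opening: the paper expands a general invariant $2$-form on $G\times W^+$ term by term and invokes its Lemma~\ref{le.5.2} (triviality of central extensions of the semisimple $\fg$, plus injectivity of $a\mapsto\omega^a$) to produce the vector-function fibrewise, whereas you get an invariant primitive at once from $H^1(G;\bbR)=H^2(G;\bbR)=0$ and averaging over the compact group $G\times H$, then kill the $\mathrm{pr}_2^*\gamma$ remainder with the $(Z^{X_j},Z^{X_k})$-condition. Your version is shorter and gives smoothness and invariance of $\mathbf a$ for free, though it rests on the same algebraic input (Whitehead's lemma) that the paper proves by hand; the paper's fibrewise Lemma~\ref{le.5.2} additionally hands it the injectivity statements and the $\fh$-kernel property reused later. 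Two small points to tidy: the deduction of $z_\fh=0$ should run through $[z_\fh,\fa]=0$ and $\ad_{z_\fh}(\fm_\lambda)\subset\fm_\lambda$ (so $(R_x^2+S_x^2)\ad_{z_\fh}(\fm^+)=0$ gives $[z_\fh,\fm]=0$) and then irreducibility ($\fg$ generated by $\fm$), rather than the $T$-identities you cite; and the bulk of the paper's proof is precisely the entry computations you defer in Step 3, which you correctly flag as the main labor and which go through exactly as you predict.
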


\begin{proof}
The following lemma is crucial for our proof.
\begin{lemma}\label{le.5.2}
Suppose that the Lie group $G$ is semisimple.
Let $\omega_0$ be a $G$-invariant closed $2$-form on $G$. Then
there exists a unique vector $a\in\fg$ such that $\omega_0=\omega^a$.
The form $\omega^a$, $a\in\fg$, on $G$ is right $H$-invariant
if and only if $\Ad(H)(a)=a$.
The kernel of such a right $H$-invariant form $\omega^a$
contains the vector fields $\xi^{l}$, for all $\xi\in\fh$.

The map $a\mapsto\omega^a$, $a\in\fg$, is an injection.
\end{lemma}
\begin{proof} (Of the lemma.)
Since the $G$-invariant form $\omega_0$ is closed, we have
$$
0=\rd\omega_0(\xi_1^{l},\xi_2^{l},\xi_3^{l})=
-\omega_0([\xi_1^{l},\xi_2^{l}],\xi_3^{l})
-\omega_0([\xi_2^{l},\xi_3^{l}],\xi_1^{l})
-\omega_0([\xi_3^{l},\xi_1^{l}],\xi_2^{l}),
$$
i.e.\ the map $c \colon \fg\times\fg\to\bbR$,
 $c(\xi,\eta)=\omega_0(\xi^{l},\eta^{l})(e)$,
is a cocycle on the Lie algebra $\fg$.
The cocycle $c$ determines the central extension
of $\fg$ (i.e.\ the linear space $\fg\oplus \bbR$, which equipped with
the commutator $[(\xi,0),(\eta,0)]=([\xi,\eta],c(\xi,\eta))$
and $[(\xi,0),(0,z)]=0$ is a Lie algebra).

Since any central extension of a semisimple Lie algebra
$\fg$ is trivial, this cocycle is exact. Indeed,
by the Malcev theorem
for the radical $\bbR$ there exists some complement
which is an algebra, evidently isomorphic to $\fg$. This
complement has the basis $(\xi,\alpha(\xi))$, where $\alpha$
is a linear function on $\fg$. Since this complement is
a subalgebra, $c(\xi,\eta)=\alpha([\xi,\eta])$. In other
words, $c(\xi,\eta)=\langle a, [\xi,\eta]\rangle$ for some
vector $a\in\fg$.
But $[\fg,\fg]=\fg$ (the algebra is semisimple)
and, consequently, such a vector $a$ is unique.
Now by the $G$-invariance, $\omega_0=\omega^a$.

It is easy to see (using Definition~\ref{eq.5.2})
that $r_h^*\omega^a=\omega^{\Ad_h a}$
for any $h\in H\subset G$.
Using the relation $[\fg,\fg]=\fg$ again we obtain that
the map $a\mapsto\omega^a$, $a\in\fg$, is an injection.
Therefore $\omega^a$ is a right $H$-invariant form on $G$ if
and only if $\Ad(H)(a)=a$.

By the invariance of the form $\langle\cdot ,\cdot \rangle$
on $\fg$, that is,
\begin{equation}
\label{invf.5.7}
\langle a,[\xi,\eta] \rangle=\langle [a,\xi],\eta \rangle,
\end{equation}
the kernel of $\omega^a$ is generated by the vector fields
$\xi^{l}$, where $\xi$ is an element of the centralizer
$\fg_a$ of $a$ in $\fg$. But if $\Ad(H)(a) =a$ then $[\fh,a]=0$,
that is, $\fh\subset \fg_a$.
\end{proof}

An arbitrary
$G$-invariant 2-form $\tio$ on $G\times W^+$ reads
\begin{equation*}
\tio=\mathrm{pr}_2^*(q)+\sum_{k=1}^{N}\sum_{j=1}^{r}
b^k_j\cdot \rd x_j \land \tith^{e_k}+
\sum_{1\leqslant s<k\leqslant N}
b^{sk}\cdot \tith^{e_s} \land \tith^{e_k},
\end{equation*}
where $q$ is a 2-form on $W^+$, and
$b^k_j$, $b^{sk}$ are smooth real-valued functions
\mbox{on $W^+$}.

Suppose that $\tio\in\cK(G\times W^+)$, that is, the form
$\tio$ satisfies conditions (1)-(5) of Theorem~\ref{th.4.8}. It is
easy to verify that if the form
$\tio$ is closed then the form $q$ is closed and each form
$\widetilde\Delta(x)=\sum_{1\leqslant s<k\leqslant N} b^{sk}(x)\cdot
\tith^{e_s} \land \tith^{e_k}$
for arbitrary but fixed
$x \in W^+$ also is closed as a form on $G$.
Then by Lemma~\ref{le.5.2} there exists a unique (smooth)
vector-function $\mathbf{b}\colon W^+\to\fg$,
$\mathbf{b}(x)=\sum_{k=1}^{N}b^k(x)e_k$, such that
$\Delta(x)=\tio^{{\mathbf b}(x)}$, that is,
\begin{equation*}
\tio=\mathrm{pr}_2^*(q)+\sum_{k=1}^{N}\sum_{j=1}^{r}
b^k_j\cdot \rd x_j \land \tith^{e_k}+
\sum_{k=1}^{N} b^k\cdot \tio^{e_k},
\quad \rd q=0.
\end{equation*}
The form $\tio$ is closed if and only if
\begin{equation*}
\begin{split}
\rd\tio
&=\sum_{k=1}^{N}\sum_{j=1}^{r}\left(
\rd b_j^k\land  \rd x_j\land \tith^{e_k}
+b_j^k\cdot\rd x_j \land \tio^{e_k}\right)
+\sum_{k=1}^{N}\rd b^k\land \tio^{e_k} \\
&=\sum_{k=1}^{N}\biggl(
\Bigl(\sum_{j=1}^{r} b_j^k \cdot \rd x_j +\rd b^k\Bigr)
\land \tio^{e_k}+
\Bigl(\sum_{j=1}^{r} \rd b_j^k  \land \rd x_j\Bigr)
\land \tith^{e_k}\biggr)=0,
\end{split}
\end{equation*}
because
$\rd (\rd x_j \land \tith^{e_k})=-\rd x_j\land \rd \tith^{e_k}$
and by~\eqref{eq.5.1}, \eqref{eq.5.2} we have
$\rd\theta^{e_k}=-\omega^{e_k}$.
Since the (closed) 2-forms $\{\omega^{e_k}\}$
and 1-forms $\{\theta^{e_k}\}$  are linearly
independent forms on $G$ (see Lemma~\ref{le.5.2}), we
see that $\sum_{j=1}^{r} b_j^k \rd x_j+\rd b^k=0$ and
$\sum_{j=1}^{r}\rd b_j^k \land \rd x_j= 0$
for all $k=1, \dotsc, N$. However the second relation
is the differential of the first one, i.e.\
these two sets of relations are equivalent to the relations
\begin{equation*}
\quad b^k_j(x)=-\frac{\partial b^k}{\partial x_j}(x),\quad j=1, \dotsc, r,
\;\;\, k=1, \dotsc, N; \;\; x\in W^+.
\end{equation*}
Then
\begin{equation*}
\tio=\mathrm{pr}_2^*(q)+\sum_{k=1}^{N}\sum_{j=1}^{r}
-\frac{\partial b^k}{\partial x_j} \cdot \rd x_j \land
\tith^{e_k}
+\sum_{k=1}^{N} b^k \tio^{e_k}=\mathrm{pr}_2^*(q)+
\rd\Biggl(-\sum_{k=1}^{N} b^k \cdot \tith^{e_k} \Biggr).
\end{equation*}

Since $\tio\in\cK(G\times W^+)$,
$\tio(Z^{X_s},Z^{X_p})=0$, where, recall,
$Z^{X_j}= (X_j^l$, $- \ri\, \partial/\partial x_j)$ and
$(X_1, \dotsc, X_r)$ is the given basis of
$\fa$. Now, the subalgebra
$\fa\subset\fg$ is commutative. Thus the restriction
$\omega^{\mathbf{b}(x)}(e)|_\fa$
vanishes (see~(\ref{eq.5.2})) and,
consequently,
$\bigl(\sum_{k=1}^{N} b^k\cdot \tio^{e_k}\bigr)(Z^{X_s},Z^{X_p})=0$.
Taking into account that
$\mathrm{pr}_2^*(q)(Z^{X_s}$, $Z^{X_p})\in\bbR$ and
$(\rd x_j \land \tith^{e_k})(Z^{X_s},Z^{X_p})\in\ri\bbR$,
we obtain that
\begin{align*}
\mathrm{pr}_2^*(q)(Z^{X_s},Z^{X_p})=-q\Big(\frac{\partial}{\partial x_s},
\frac{\partial}{\partial x_p}\Big)=0,
\\
\Biggl(\sum_{k=1}^{N}\sum_{j=1}^{r}
-\frac{\partial b^k}{\partial x_j}  \cdot
\rd x_j \land \tith^{e_k}\Biggr)
(Z^{X_s},Z^{X_p})=0.
\end{align*}
Therefore, the form $q$ vanishes on $W^+$
and
$\displaystyle
\frac{\partial b^p}{\partial x_s}(x)
=\frac{\partial b^s}{\partial x_p}(x)$
for all $x\in W^+$, $s,p\in\{1, \dotsc, r\}$.
Since the domain
$W^+\subset\bbR^r$ is convex,
there exists a unique (up to a
real constant) smooth function
$f \colon W^+\to\bbR$ such that
\begin{equation*}
b^k(x) = - \frac{\partial f}{\partial x_k}(x),\;\; k=1, \dotsc, r, \;
x\in W^+.
\end{equation*}
In other words, a $G$-invariant $2$-form $\tio$ on $G\times W^+$
is closed and
$\tio(Z^{X_s},Z^{X_p})=0$, $s,p\in\{1, \dotsc, r\}$,
if and only if
\begin{equation*}
\tio=\rd \tith^{\mathbf a},\quad
{\mathbf a}(x)=\sum_{k=1}^{r}\frac{\partial f}{\partial x_k}(x)X_k+
\mathbf{a}^\bot(x),
\end{equation*}
where $\mathbf{a}^\bot \colon W^+\to\fm^+
\oplus\fk$  is a
smooth vector-function. It is clear that
\begin{equation}\label{eq.5.8}
\tio=\sum_{j=1}^r\rd x_j\land\tith^{{\mathbf{a}_{[j]}}}-\tio^{\mathbf{a}},
\ \text{where}\quad
\mathbf{a}_{[j]}=\frac{\partial \mathbf{a}}{\partial x_j}=
\sum_{k=1}^{r}\frac{\partial^2 f}{\partial x_j\partial x_k}X_k+
\frac{\partial \mathbf{a}^\bot}{\partial x_j}.
\end{equation}
This form is right $H$-invariant if
$
\tio=r^*_h \tio=
\sum_{j=1}^r\rd x_j\land\tith^{\Ad_h \mathbf{a}_{[j]}}-\tio^{\Ad_h
\mathbf{a}}
$ for all $h\in H$.
Since the maps $a\mapsto \theta^a$ and $a\mapsto \omega^a$,
$a\in\fg$, are injections (cf.\ Lemma \ref{le.5.2})
we obtain that $\mathbf{a}_{[1]}(x),\dotsc,\mathbf{a}_{[r]}(x),
\mathbf{a}(x)\in\fg_H$ (see Definitions~(\ref{eq.4.13}) and~(\ref{eq.4.14})).
In this
case by Lemma~\ref{le.5.2} the kernel of the form
$\tio^{\mathbf a}$ contains the subbundle
$\sH\subset TG\times TW^+$. It is easy now to verify that
the kernel of $\tio$ contains $\sH$ if and only if
$\langle \mathbf{a}_{[j]}(x),\fh \rangle=0$ for all
$x\in W^+$ and $j=1,\dotsc ,r$. Thus ${\mathbf a}_{[j]}(x)\bot\fz(\fh)$
for all $x\in W^+$ because $\fg_H\cap\fh=\fz(\fh)$.
This means that the $\fz(\fh)$-component
of the vector ${\mathbf a}(x)$ is a constant.
Taking into account Proposition~\ref{pr.4.4} and Remark~\ref{re.4.5}
we obtain that $\mathbf{a}(x)=a^\fa(x)+z_\fh+a^\fk(x)+a^\fm(x)$, where
\begin{equation}\label{eq.5.9}
\begin{split}
a^\fa(x)&=\sum_{j=1}^{r}\frac{\partial f}{\partial x_j}(x)X_j,
\quad z_\fh\in\fz(\fh),\\
a^\fk(x)&=\sum_{\lambda\in \Sigma_{H}\cap\Sigma^+}
a^\fk_\lambda(x)\zeta^{1}_\lambda \in \fk_{H}^+, \quad
a^\fm(x)=\sum_{\lambda\in \Sigma_{H}\cap\Sigma^+}
a^\fm_\lambda(x)\xi^{1}_\lambda \in \fm_{H}^+.
\end{split}
\end{equation}

It is convenient now, using~(\ref{eq.5.8}),
 to calculate $\ri\tio(Z^{X_k},\overline{Z^{X_j}})$
$=\ri\tio(Z^{X_k},\overline{Z^{X_j}} - Z^{X_j})$:
\begin{equation*}
\ri\tio(Z^{X_k},\overline{Z^{X_j}})
=\ri\tio\left( \left(X_k^l,- \ri\, \dfrac{\partial}{\partial x_k}\right),
\left(0, 2\ri\, \dfrac{\partial}{\partial x_j}\right) \right)
= -\ri\langle \mathbf{a}_{[j]},X_k \rangle 2\ri
=2\frac{\partial^2 f}{\partial x_k\partial x_j}.
\end{equation*}
Using~(\ref{eq.4.30})
for the vector field $Z^{\xi}$, $\xi\in\fm^+$
we obtain, for all $(g,x) \in G \times W^+$,
\begin{equation*}
\begin{split}
\tio_{(g,x)}(Z^{X_k},&Z^{\xi})
= -\ri\langle \mathbf{a}_{[k]}(x),R_x\xi-\ri S_x\xi\rangle
-\langle \mathbf{a}(x),[X_k, R_x\xi-\ri S_x\xi]\rangle
\\ \noalign{\smallskip}
&= -\ri\langle (R_x+\ri S_x)\mathbf{a}_{[k]}(x),\, \xi\rangle
+\langle (R_x+\ri S_x)\ad_{X_k}\mathbf{a}(x),\, \xi\rangle
\\ \noalign{\smallskip}
&= -\ri\langle R_x \mathbf{a}_{[k]}(x)-S_x\ad_{X_k} \mathbf{a}(x),\,
\xi\rangle
+\langle S_x \mathbf{a}_{[k]}(x)+R_x\ad_{X_k} \mathbf{a}(x),\,\xi\rangle.
\end{split}
\end{equation*}
Thus $\tio(Z^{X_k},Z^{\xi})=0$ for all $k=1,\dotsc,r$,
and $\xi\in\fm^+$ if and only if
\begin{equation}
\label{eq.5.10}
R_x a^\fm_{[k]}(x)-S_x\ad_{X_k} a^\fm(x)=0
\quad\text{and}\quad
S_x a^\fk_{[k]}(x)+R_x\ad_{X_k} a^\fk(x)=0
\end{equation}
for all $x\in W^+$, because $[\fa,\fa\oplus\fz(\fh)]=0$ and~(\ref{eq.4.33}) hold.
 In other words, we obtain
the following differential relations:
\begin{equation*}
\frac{\partial a^\fm_\lambda(x)}{\partial x_k}=
-\frac{\lambda'(X_k)\cdot\cosh \lambda'_x}{\sinh \lambda'_x}\,
a^\fm_\lambda(x)
\quad\text{and}\quad
\frac{\partial a^\fk_\lambda(x)}{\partial x_k}=
-\frac{\lambda'(X_k)\cdot\sinh \lambda'_x}{\cosh \lambda'_x}\,
a^\fk_\lambda(x),
\end{equation*}
with solutions
\[
a^\fm_\lambda(x) =\dfrac{c^\fm_\lambda}{\sinh \lambda'_x}, \; \quad
\; a^\fk_\lambda(x) = \dfrac{c^\fk_\lambda}{\cosh \lambda'_x}, \; \qquad
c^\fm_\lambda, c^\fk_\lambda \in \bbR,\
\lambda \in \Sigma_{H}\cap\Sigma^+.
\]
In this place it is convenient to calculate also
$\ri\tio(Z^{X_k},\overline{Z^{\xi}})(g,x)$,
$\xi\in\fm^+$:
\begin{equation*}
\begin{split}
\ri\tio_{(g,x)}(Z^{X_k},\overline{Z^{\xi}} -  Z^\xi)
\stackrel{\mathrm{(\ref{eq.5.8})}}=
&  \ri\bigl(-\ri \langle \mathbf{a}_{[k]}(x), \, 2\ri S_x\xi\rangle
-\langle \mathbf{a}(x),\, [X_k, \,2\ri S_x\xi]\rangle\bigr)  \\
\stackrel{\mathrm{(\ref{eq.5.9})}}=
& 2\ri\bigl\langle -S_x a^\fk_{[k]}(x), \,\xi\bigr\rangle
+2\langle a^\fm(x),\, [X_k, \,S_x\xi]\rangle \\
\stackrel{\mathrm{(\ref{eq.5.10})}}=
&2\ri\langle R_x \ad_{X_k} a^\fk(x), \,\xi\rangle
+2\langle a^\fm(x), \,(\ad_{X_k} S_x)\xi\rangle \\
\stackrel{\mathrm{(\ref{eq.4.32})}}=
&-2\ri\langle a^\fk(x),\, (\ad_{X_k}R_x)\xi\rangle
+2\langle a^\fm(x),\,(\ad_{X_k} S_x)\xi\rangle.
\end{split}
\end{equation*}
Thus, $\ri\tio(Z^{X_k},\overline{Z^{\xi}})=0$ if
$\xi\in\fm_{\tw}^+$ and for vectors
$\xi^1_\lambda\in \fm_H^+$,
$\lambda\in \Sigma_{H}\cap\Sigma^+$:
\begin{equation*}
\ri\tio_{(g,x)}(Z^{X_k},\overline{Z^{\xi^{1}_\lambda}})
\stackrel{\mathrm{(\ref{eq.4.34})},\mathrm{(\ref{eq.4.6})}}=
2\ri \frac{\lambda'(X_k)}{\cosh \lambda'_x}\,
\bigr\langle a^\fk(x),\, \zeta^{1}_\lambda \bigl\rangle
-2\frac{\lambda'(X_k)}{\sinh \lambda'_x}\,
\bigl\langle a^\fm(x), \, \xi^{1}_\lambda\bigr\rangle.
\end{equation*}

Using the invariance \eqref{invf.5.7} of the scalar product, the
properties~(\ref{eq.4.32}),~(\ref{eq.4.33})
of the operator-functions $R$, $S$
and the commutation relations~(\ref{eq.4.1}),
we calculate $\tio(Z^{\xi},Z^{\eta})=-\tio^{{\mathbf a}(x)}(Z^{\xi},
$ $Z^{\eta})$,
$\xi,\eta\in\fm^+$, putting $A(x)=\ad_{\mathbf{a}(x)}$,
$A^\fm(x)=\ad_{a^\fa(x)+a^\fm(x)}$, $A^\fk(x)=\ad_{a^\fk(x)+z_\fh}$:
\begin{equation*}
\begin{split}
\tio(Z^{\xi},Z^{\eta})
&=-\langle \mathbf{a},\, [R\xi-\ri S\xi, \, R\eta-\ri S\eta]\rangle\\
&=-\langle \mathbf{a},\, [R\xi,\, R\eta]-[S\xi,\,S\eta]\rangle
+\ri\langle \mathbf{a}, \, [R\xi,\, S\eta]+[S\xi,\, R\eta]\rangle \\
&\stackrel{\eqref{invf.5.7}}=
-\langle [\mathbf{a},\,R\xi], \, R\eta\rangle
+\langle[\mathbf{a},\,S\xi],\,S\eta\rangle
+\ri\langle [\mathbf{a},\,R\xi], \,S\eta\rangle
+\ri\langle[\mathbf{a},\,S\xi], \,R\eta\rangle \\
& \stackrel{\mathrm{(\ref{eq.4.32})}}=
-\langle (RAR+SAS)\xi,\,\eta\rangle+\ri\langle (RAS-SAR)\xi,\,\eta\rangle\\
& \stackrel{\mathrm{(\ref{eq.4.1})},\mathrm{(\ref{eq.4.33})}}=
-\big\langle (RA^\fk R+SA^\fk S)\xi,\,\eta \big\rangle
+\ri \big\langle (RA^\fm S-SA^\fm R)\xi,\,\eta \big\rangle.
\end{split}
\end{equation*}

Since the algebra $\fa$ is commutative,
$[R_x,\,S_x]=[R_x,\,\ad_{a^\fa(x)}] = [S_x,\,\ad_{a^\fa(x)}]=0$ on
$\fg$ for any $x\in W^+$. Similarly, from $[\fa, \fz(\fh)] = 0$
it follows that
$[R_x, \ad_{z_\fh}] =  [S_x, \ad_{z_\fh}] = 0$ on $\fg$ for any
$x \in W^+$.  Thus
$\tio(Z^{\xi},\,Z^{\eta}) = 0$ for all
$\xi,\eta\in\fm^+$ if and only if for all
$x\in\ W^+$,
Equations \eqref{eq.5.5} hold, because
relations~(\ref{eq.4.1}), (\ref{eq.4.33}) hold and
the space $\fa\oplus\fh$ ($\fa\subset\fm$, $\fh\subset\fk$)
is the kernel of $R_x$ and $S_x$ ($R_x(\fm)=\fm^+$, $S_x(\fk)=\fm^+$,
that is, $R_x(\fa)=0$, $S_x(\fh)=0$).

If $a^\fk(x)=0$ then the condition~(\ref{eq.5.5}) implies
$(R_x^2+S_x^2)\ad_{z_\fh}(\fm^+)=0$. Since for each
$x\in W^+$ by~(\ref{eq.4.15}),
$\ad_{z_\fh}(\fm_\lambda)\subset\fm_\lambda$
and  $\cosh^{-2} \lambda'(x)-\sinh^{-2} \lambda'(x)\ne 0$,
we obtain that $\ad_{z_\fh}(\fm^+)=0$. But $\ad_{z_\fh}(\fa)=0$
by its definition, that is, $[z_\fh,\fm]=0$. If $G/K$ is an irreducible
symmetric space then the algebra $\fg$ is generated by $\fm$
and therefore $z_\fh=0$.

Next we calculate also the value
$\ri\tio(Z^{\xi},\overline{Z^{\eta}})
= \ri\tio(Z^{\xi},\overline{Z^{\eta}} -Z^\eta)$:
\begin{equation}\label{eq.5.11}
\begin{split}
&\ri\tio(Z^{\xi},\overline{Z^{\eta}})
= -\ri\langle \mathbf{a},[R\xi-\ri S\xi,\,2\ri S\eta]\rangle \\
&=-2\ri\langle \mathbf{a},[S\xi,\,S\eta]\rangle
+2\langle \mathbf{a},[R\xi,\,S\eta]\rangle \\ \noalign{\smallskip}
&=
-2\ri\langle [\mathbf{a},\,S\xi],S\eta\rangle
+2\langle [\mathbf{a},\,R\xi],S\eta\rangle
=2\ri\langle (SAS)\xi,\,\eta\rangle-2\langle (SAR)\xi,\,\eta\rangle\\
\noalign{\smallskip}
& =2\ri \big\langle \big(S(\ad_{a^\fk+z_\fh})S \big)\xi,\,\eta\rangle
-2\big\langle \big( S(\ad_{a^\fa+a^\fm})R \big)\xi,\,\eta\big\rangle
\end{split}
\end{equation}
or well, for any vectors $\xi^j_\lambda$, $\xi^k_\mu$,
$\lambda,\mu\in \Sigma^+$, Equations~\eqref{eq.5.6} hold.

Since by~(\ref{eq.4.33}) the subspace
$\fm_{H}^+\oplus\fk_{H}^+\subset\fg_{H}$
 is $R_x$, $S_x$ invariant,
$[\fg_{H},\fm_{H}^+\oplus\fk_{H}^+]\subset\fg_{H}\bot
\fm_{\tw}^+\oplus\fk_{\tw}^+$,
$[\fz(\fh),\fg_H]=0$, then from~(\ref{eq.5.11}) we obtain that
$\ri\tio(Z^{\xi},\overline{Z^{\eta}})=0$ for
all $\xi\in \fm_H^+$ and $\eta\in \fm_{\tw}^+$.

Now all assertions of Theorem~\ref{th.5.1} follows from
Theorem~\ref{th.4.8}.
\end{proof}

Also Theorem~\ref{th.4.8} immediately implies the following
\begin{corollary}
Let $G/K$ be a Riemannian symmetric space of compact type.
Each $G$-invariant K\"ahler metric ${\bf g}$, associated with
the canonical complex structure $J^K_c$ on
$G/H \times W^+ \cong T^+(G/K)$ $(\,T^+(G/K)$ is an open
dense subset of $T(G/K)\,)$,
is determined precisely by the K\"ahler form
$\omega(\cdot, \cdot) = {\bf g}(-J^K_c \cdot, \cdot)$
on $G/H \times W^+$ given by
\[
(\pi_H \times \mathrm{id})^\ast \omega = \rd\widetilde\theta^{\mathbf{a}},
\]
where $\mathbf{a}$ is the unique smooth vector-function
$\mathbf{a} \colon \bbR^+ \to \fg_H$ in \eqref{eq.5.3} satisfying
Conditions
$(1)\!\!-\!\!(3)$ of \emph{Theorem~\ref{th.5.1}}.
If, in addition, condition $(4)$
of \emph{Theorem~\ref{th.5.1}} holds,
this metric ${\mathbf g}$ is Ricci-flat.
\end{corollary}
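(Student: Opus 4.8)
The plan is to read off the corollary by composing two facts already in hand: the bijection of Theorem~\ref{th.4.8}(i) between $\cK(G\times W^+)$ and $\cK(G/H\times W^+)\cong\cK(T^+(G/K))$, and the explicit description of $\cK(G\times W^+)$ by vector-functions $\mathbf a$ supplied by Theorem~\ref{th.5.1}. The only thing to check beforehand is that the hypotheses of those theorems are met: since $G/K$ is a symmetric space of compact type the Lie algebra $\fg$ is semisimple, so Theorem~\ref{th.5.1} and Theorem~\ref{th.4.8}(ii) apply.

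First I would spell out the chain of identifications. The $G$-equivariant diffeomorphism $\phi\circ f^+\colon G/H\times W^+\to T^+(G/K)$ transports a $G$-invariant K\"ahler structure $(\bfg,\omega,J^K_c)$ on $T^+(G/K)$ to one on $G/H\times W^+$, and by the dictionary of Subsection~\ref{ss.2.2} the metric $\bfg$ and its fundamental form $\omega$ determine each other through $\omega(X,Y)=\bfg(-J^K_c X,Y)$; hence it suffices to describe the forms $\omega\in\cK(G/H\times W^+)$. By Theorem~\ref{th.4.8}(i), the assignment $\omega\mapsto\tio=(\pi_H\times\mathrm{id})^*\omega$ is a bijection of $\cK(G\times W^+)$ onto $\cK(G/H\times W^+)$. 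Finally, Theorem~\ref{th.5.1} asserts that every $\tio\in\cK(G\times W^+)$ equals $\rd\tith^{\mathbf a}$ for a unique smooth $\mathbf a\colon W^+\to\fg_H$ of the form~(\ref{eq.5.3}) satisfying Conditions (1)--(3) there; the additive real constant in the accompanying function $f$ plays no role, since only the derivatives $\partial f/\partial x_j$ enter $\mathbf a$, so it is $\mathbf a$ itself that is uniquely pinned down by $\tio$. Composing these three steps yields exactly the stated one-to-one correspondence between $G$-invariant K\"ahler metrics on $G/H\times W^+\cong T^+(G/K)$ and such vector-functions.

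For the Ricci-flat part I would simply invoke the corresponding halves of the same two theorems: by Theorem~\ref{th.4.8}(ii) --- this is where semisimplicity of $G$ is used --- the pullback map restricts to a bijection of $\cR(G\times W^+)$ onto $\cR(G/H\times W^+)\cong\cR(T^+(G/K))$, and by Theorem~\ref{th.5.1} a form $\rd\tith^{\mathbf a}\in\cK(G\times W^+)$ belongs to $\cR(G\times W^+)$ precisely when $\mathbf a$ meets the extra Condition (4). Hence imposing Conditions (1)--(4) on $\mathbf a$ singles out the Ricci-flat metrics in the family. I do not expect any genuine obstacle here: the corollary is a bookkeeping consequence of Theorems~\ref{th.4.8} and~\ref{th.5.1}, and the only points worth a sentence of care are the semisimplicity of $\fg$ in the compact-type case and the observation that the correspondence metric $\leftrightarrow$ K\"ahler form $\leftrightarrow\mathbf a$ is genuinely one-to-one.
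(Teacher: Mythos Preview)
Your proposal is correct and matches the paper's approach: the paper presents this corollary with no separate proof, stating only that it follows immediately from the preceding theorems, and your write-up makes explicit precisely the chain of identifications (Theorem~\ref{th.4.8}(i) for the bijection $\cK(G\times W^+)\leftrightarrow\cK(G/H\times W^+)$, Theorem~\ref{th.5.1} for the parametrization by $\mathbf a$, and their Ricci-flat counterparts via parts (ii) and (4)) that underlies that assertion. Your remark that compact type forces $\fg$ semisimple, needed for Theorem~\ref{th.5.1} and Theorem~\ref{th.4.8}(ii), is the one genuine hypothesis check and you have it right.
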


\begin{corollary}\label{co.5.4}
The $G$-invariant function $Q \colon
G/H\times W^+\to \bbR$, $Q(gH,x)=2f(x)$,
where $f\in C^\infty(W^+,\mathbb{R})$,
is a potential function of the K\"ahler structure $(\omega,J^K_c)$
on $G/H\times W^+$
(equivalently $(\pi_H \times \mathrm{id})^\ast \omega \in
\cK(G\times W^+)$$)$
if and only if
$(\pi_H \times \mathrm{id})^\ast \omega= \rd\tilde\theta^{\mathbf{a}}$,
where $\mathbf{a} \colon W^+\to W^+$,
$\mathbf{a}(x)= \sum_{k=1}^r \frac{\partial f}{\partial x_k}(x) X_k$,
is a $W^+$-valued  vector-function such that for all $x\in W^+$
the matrix $\displaystyle \Big(\frac{\partial^2 f}{\partial x_j
\partial x_k}(x)\Big)$ is positive-definite.

This K\"ahler structure with $G$-invariant potential function
$Q$ is Ricci-flat K\"ahler
(equivalently $(\pi_H \times \mathrm{id})^\ast \omega \in
\cR(G\times W^+)$$)$
if and only if
\begin{equation}\label{eq.5.12}
\det\left(\frac{\partial^2 f}{\partial x_j\partial x_k}\right)
\cdot\prod_{\lambda\in\Sigma^+}
\left( \frac{2\lambda'(\mathbf{a})}{\sinh 2\lambda'(\mathbf{a})}
\right)^{m_\lambda}
\equiv\mathrm{const}.
\end{equation}
\end{corollary}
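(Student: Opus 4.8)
The plan is to identify the statement with the special case of Theorem~\ref{th.5.1} in which the vector-function $\mathbf a$ has only its $\fa$-component. Since the $G$-action on $G/H\times W^+$ is trivial on the second factor, a $G$-invariant function there is precisely one of the form $Q(gH,x)=2f(x)$ with $f\in C^\infty(W^+,\bbR)$, so I may assume $Q$ has this shape. To compute $\partial\bar\partial Q$ I would reuse the calculation from the proof of Proposition~\ref{pr.4.7}: applied with the $G$-invariant function there taken to be $Q/2$, it gives $(\pi_H\times\mathrm{id})^*\bar\partial Q=\sum_{j=1}^r\frac{\partial f}{\partial x_j}\bigl(\ri\,\tith^{X_j}+\rd x_j\bigr)$. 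Differentiating, using $\rd\tith^{X_j}=-\tio^{X_j}$ (from~\eqref{eq.5.1}--\eqref{eq.5.2}) and the symmetry of the Hessian, one obtains
\[
(\pi_H\times\mathrm{id})^*(\partial\bar\partial Q)=\ri\sum_{j,k=1}^r\frac{\partial^2 f}{\partial x_j\partial x_k}\,\rd x_k\land\tith^{X_j}-\ri\sum_{j=1}^r\frac{\partial f}{\partial x_j}\,\tio^{X_j},
\]
which, up to the sign/normalization convention for the potential, is exactly the form $\rd\tith^{\mathbf a}$ of~\eqref{eq.5.4} with $\mathbf a_{[j]}=\partial\mathbf a/\partial x_j=\sum_k\frac{\partial^2 f}{\partial x_j\partial x_k}X_k$, i.e.\ with $\mathbf a(x)=\sum_{k=1}^r\frac{\partial f}{\partial x_k}(x)X_k$ (so $\mathbf a^\bot=0$ and $z_\fh=a^\fk=a^\fm=0$ in~\eqref{eq.5.3}). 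Since $\pi_H\times\mathrm{id}$ is a submersion, pullback of forms is injective, so $Q$ is a potential of $(\omega,J^K_c)$ iff $(\pi_H\times\mathrm{id})^*\omega=\rd\tith^{\mathbf a}$ with this $\mathbf a$; conversely, by the uniqueness part of Theorem~\ref{th.5.1} the vector-function attached to such a $\tio$ is this very $\mathbf a$, so $z_\fh=a^\fk=a^\fm=0$ and $2f$ is indeed a $G$-invariant potential.

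Next I would extract the positivity conditions. In Theorem~\ref{th.5.1}, putting $z_\fh=a^\fk=a^\fm=0$ makes the commutation relations~\eqref{eq.5.5} and the side condition on $z_\fh$ in item~$(1)$ vacuous, and makes the mixed entries $w_{k|{}^1_\lambda}$ in item~$(2)$ vanish. Thus $\mathbf w_H(x)$ is block-diagonal with $\fa$-block $\bigl(2\frac{\partial^2 f}{\partial x_k\partial x_j}(x)\bigr)$ and a diagonal $(\Sigma_H\cap\Sigma^+)$-block, and $\mathbf w_{\tw}(x)$ is diagonal; a short computation from~\eqref{eq.5.6} via~\eqref{eq.4.6} and~\eqref{eq.4.34} (recalling $\mathbf a(x)\in\fa$ and that each $\lambda\in\Sigma_\fh$ has multiplicity one) shows that every diagonal entry of these two matrices outside the $\fa$-block is a strictly positive multiple (for $x\in W^+$) of the corresponding value $\lambda'(\mathbf a(x))$. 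Hence positive-definiteness of $\mathbf w_H(x)$ for all $x$ is equivalent to $\bigl(\frac{\partial^2 f}{\partial x_j\partial x_k}\bigr)$ being positive-definite together with $\lambda'(\mathbf a(x))>0$ for $\lambda\in\Sigma_H\cap\Sigma^+$, and positive-definiteness of $\mathbf w_{\tw}(x)$ is equivalent to $\lambda'(\mathbf a(x))>0$ for $\lambda\in\Sigma^+\setminus\Sigma_H$; jointly these say the Hessian of $f$ is positive-definite and $\mathbf a(W^+)\subset W^+$. Combined with the bijection $\tio\mapsto\omega$ of Theorem~\ref{th.4.8} onto $\cK(T^+(G/K))$, this gives the first assertion of the corollary.

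Finally, for Ricci-flatness I would apply item~$(4)$ of Theorem~\ref{th.5.1}: $\tio\in\cR(G\times W^+)$ iff $\det\mathbf w_H(x)\cdot\det\mathbf w_{\tw}(x)\equiv\mathrm{const}$ (or $\det\mathbf w_H(x)\equiv\mathrm{const}$ when $\fm_{\tw}^+=0$). From the diagonal structure just described, $\det\mathbf w_H(x)$ is a nonzero constant times $\det\bigl(\frac{\partial^2 f}{\partial x_j\partial x_k}(x)\bigr)$ times the product over $\lambda\in\Sigma_H\cap\Sigma^+$ of the corresponding $\lambda'(\mathbf a(x))$-entries, and $\det\mathbf w_{\tw}(x)$ is a constant times the analogous product over $\lambda\in\Sigma^+\setminus\Sigma_H$ with each factor raised to the power $m_\lambda$; multiplying and absorbing numerical constants reduces condition~$(4)$ exactly to the relation~\eqref{eq.5.12}, the two cases $\fm_{\tw}^+=0$ and $\fm_{\tw}^+\ne0$ coalescing because when $\Sigma_H=\Sigma$ the product over $\Sigma^+$ in~\eqref{eq.5.12} is already the product over $\Sigma_H\cap\Sigma^+$. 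All converse directions follow by reversing these equivalences. The only genuine work, and where I expect the main obstacle, is this bookkeeping: verifying that the positive-definiteness demands of Theorem~\ref{th.5.1} collapse \emph{precisely} to ``$\mathrm{Hess}(f)$ positive and $\mathbf a$ mapping $W^+$ into $W^+$'' (in particular that positivity of $\mathbf w_{\tw}$ is exactly what forces $\mathbf a(x)$ into the Weyl chamber), and the explicit evaluation of $\det\mathbf w_H\cdot\det\mathbf w_{\tw}$ leading to~\eqref{eq.5.12}.
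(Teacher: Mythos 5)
Your proposal is correct and takes essentially the same route as the paper: it identifies $\ri\,\bar\partial\partial Q$ with $\rd\tith^{\mathbf a}$ for $\mathbf a=\sum_k\frac{\partial f}{\partial x_k}X_k$ by the same pullback computation as in Proposition~\ref{pr.4.7} (the paper does the mirror calculation with $\partial Q$), and then specializes Theorem~\ref{th.5.1} with $z_\fh=a^\fk=a^\fm=0$, obtaining the block-diagonal ${\mathbf w}_H$ and diagonal ${\mathbf w}_{\tw}$ exactly as in the paper's argument. One small point: your determinant bookkeeping yields $\sinh 2\lambda'(x)$ in the denominators (consistent with the rank-one equation~(\ref{eq.6.7})), so the $\sinh 2\lambda'(\mathbf a)$ printed in~(\ref{eq.5.12}) should be read as a misprint rather than as something your reduction fails to reach.
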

\begin{proof}
 Let $f\in C^\infty(W^+,\mathbb{R})$
be an arbitrary function. Consider the form
$\ri \bar\partial\partial Q$.
By definition, $\partial Q|_{F}=\rd Q|_{F}$ and
$\partial Q|_{\overline{F}}=0$. Denote by
$\widetilde\Delta$ the $1$-form
$(\pi_H\times \mathrm{id})^*(\partial Q)$ on
$G\times W^+$.  By~\eqref{eq.4.29},
the form
$\widetilde\Delta$ is the unique $1$-form on
$G\times W^+$ such that
\begin{equation*}
\begin{split}
\widetilde\Delta_{(g,x)}\big(X^l_k(g), - \ri\,
\tfrac{\partial}{\partial x_k}(x) \big)
&= -2\ri \tfrac{\partial f}{\partial x_k}(x),
\quad
\widetilde\Delta_{(g,x)}\big(X^l_k(g), \ri\,
\tfrac{\partial}{\partial x_k}(x)  \big)=0,  \\
\widetilde\Delta_{(g,x)}\big(\eta^l(g),0\big)
& =0 \quad\text{for all }\ \eta\in\fg,
\ \langle \eta,\fa \rangle=0,
\quad k=1,\ldots,r.
\end{split}
\end{equation*}
It is easy to verify that
$\widetilde\Delta= -\ri \tith^{\mathbf{a}}+\frac12\rd Q$:
$$\textstyle
\widetilde\Delta_{(g,x)}\big(\xi^l(g),\,
\sum_{k=1}^r t_k\tfrac{\partial }{\partial x_k}(x)\big)
=
-\ri \big\langle \sum_{k=1}^r \frac{\partial f}{\partial x_k}(x) X_k
,\,\xi \big\rangle+
\sum_{k=1}^r t_k\tfrac{\partial f}{\partial x_k}(x),
$$
for all $\xi\in\fg, \; t_k\in\bbR$.
Thus $\ri \cdot \rd\widetilde\Delta= \rd \tith^{\mathbf{a}}$.
The form $\rd \tith^{\mathbf{a}}=
\sum_{k=1}^r\rd x_k\land\tith^{{\mathbf{a}_{[k]}}}-\tio^{\mathbf{a}}
$ is right $H$-invariant because $\mathbf{a}(x)\in\fa\subset\fg_H$.
Its kernel contains kernel~\eqref{eq.4.28} of the submersion
$\pi_H\times \mathrm{id}$ because $\langle \fa,\fh \rangle=0$
and $[\fa,\fh]=0$.
Therefore there exists a
unique 2-form $\omega$ on $G/H\times W^+$ such that
$ \rd \tith^{\mathbf{a}}=(\pi_H\times \mathrm{id})^* \omega$.
Since
$$
\rd \tith^{\mathbf{a}}=
\ri \cdot \rd\widetilde\Delta
\eqdef \ri \cdot \rd \big((\pi_H\times \mathrm{id})^*(\partial Q)\big)
=\ri (\pi_H\times \mathrm{id})^*\big( \rd(\partial Q)\big)
=(\pi_H\times \mathrm{id})^*\big(\ri \bar\partial(\partial Q)\big)
$$
and $\pi_H\times \mathrm{id}$ is a submersion,  we obtain that
$\ri \bar\partial\partial Q=\omega$.

To prove that the form $\omega$ is a K\"ahler form
note that our form $\rd \tith^{\mathbf{a}}$ is a special case
of the form considered in Theorem~\ref{th.5.1}.

Indeed, choosing the vector-function $\mathbf{a}$
as in Theorem~\ref{th.5.1}
such that its components $z_\fh$, $a^\fk$, $a^\fm$
vanish identically on $W^+$
so that $\mathbf{a}(x)= \sum_{k=1}^r
\frac{\partial f}{\partial x_k}(x) X_k$,
we obtain from \eqref{eq.5.6} for
this function $\mathbf{a}$ that:
(1) the $p\times p$-matrix-function
${\mathbf w}_H(x)$, $p=\dim\fm_H$,
is diagonal except for
the first $r\times r$-block
$\big(2\frac{\partial^2 f}{\partial x_k\partial x_j}(x)\big)$,
$k,j\in\{1,\ldots,r\}$;
(2) the $s\times s$-matrix
${\mathbf w}_{\tw}(x)$, $s=\dim\fm_{\tw}^+$, is diagonal.

Then the Hermitian matrices ${\mathbf w}_H(x)$, ${\mathbf w}_{\tw}(x)$
are positive-definite if and only if
the matrix $\textstyle \left(\frac{\partial^2 f}{\partial x_j
\partial x_k}(x)\right)$ is positive-definite and
the condition
\linebreak
$w_{{}_\lambda^j|{}_\lambda^j}(x)=
2 \lambda'\bigl(\mathbf{a}(x)\bigr)/\bigl(\cosh \lambda'(x)$
$\cdot \sinh \lambda'(x)\bigr) >0$
is satisfied for all restricted roots
$\lambda\in\Sigma^+$ and $j=1,\dotsc,m_\lambda$.
Since $x \in W^+$, one has
$\sinh \lambda'(x) >0$, $\cosh \lambda'(x) >0$,
so the previous condition can be simplified to
$\lambda'\bigl(\mathbf{a}(x)) > 0$ for all
$\lambda\in\Sigma^+$, which amounts to the fact that
$\mathbf{a}(W^+)\subset W^+$.

Taking into account that condition~(\ref{eq.5.12})
is condition (4) of Theorem~\ref{th.5.1} in our special case,
we obtain the last statement of the corollary.
\end{proof}

\section{New complete invariant Ricci-flat K\"ahler
metrics on $T\mathbb{S}^2$}
\label{s.6}

Let $\fg$ be a compact Lie algebra and let $\sigma$,
$\fk$, $\fm$, $\fa$, $\Sigma$, etc. be as in
Subsection~\ref{ss.4.1}. We continue with the previous
notations but in this section it is assumed in addition that
$G/K$ is the rank-one Riemannian symmetric space $\mathbb{S}^2$,
that is $G/K= \mathrm{SO}(3)/\mathrm{SO}(2)$
\big(also $\mathbb{S}^2 \cong {\mathbb C}{\mathbf P}^1
\cong \mathrm{SU}(2)/\mathrm{S}(\mathrm{U}(1){\times} \mathrm{U}(1))\big)$.
In this case the Lie algebra $\fg$ is the algebra $\mathfrak{so}(3)$ of
skew-symmetric $3\times 3$ real matrices.
Denote by $E_{jk}$ the elementary
$3\times 3$ matrix with $1$ in the entry in the
$j$th row and the $k$th column and
$0$ elsewhere. Then the set of vectors $\{X,Y,Z\}$, where
$$
X\eqdef E_{12} - E_{21},
\quad
Y\eqdef E_{13} - E_{31}
\quad\text{and}\quad
Z\eqdef E_{23} - E_{32},
$$
is a basis of the three-dimensional Lie algebra $\fg$.
The compact Lie subalgebra
${\mathfrak k}=\bbR Z$ of the semisimple Lie algebra
${\mathfrak g}=\mathfrak{so}(3)$
is a Cartan subalgebra of $\fg$.

Fix on $\fg= \mathfrak{so}(3)$ the
invariant trace form given by $\langle B_1, B_2 \rangle=
-\tfrac12 \tr B_1B_2$, $B_1,B_2\in \mathfrak{so}(3)$.
Then all three vectors $X,Y,Z$ have the same length equal to $1$
and the space $\fm=\bbR X\oplus\bbR Y$ is the orthogonal complement
of $\fk=\bbR Z$ in $\fg$. Since $G/K$ is a rank-one symmetric space,
each nonzero vector from the  subspace $\fm$
generates a Cartan subspace of $\fm$.
Fix the Cartan subspace $\fa=\bbR X$  of $\fm$.
It is easy to verify that
\begin{equation}\label{eq.6.1}
[X,Y]=- Z, \
[X,Z]= Y, \
[Z,Y]= X \
\text{and}\;\,
[Z,[Z,w]] =-w,\
\forall w\in\fm.
\end{equation}
From~(\ref{eq.6.1}) it follows that
the restricted root system $\Sigma=\{\pm\varepsilon\}$,
where $\varepsilon\in({\mathfrak a}^{\mathbb C})^*$
and $\varepsilon'(X)=1$, where, recall,
$\varepsilon=\ai\varepsilon'$. Also by~(\ref{eq.6.1}),
$\fm^+=\fm_\varepsilon=\bbR Y$, $\fk^+=\fk_\varepsilon=\bbR Z$
and the algebra $\fh=0$ ($\fh$ is the centralizer of $\fa=\bbR X$
in $\fk=\bbR Z$).
Therefore the centralizer $\fg_\fh$
of $\fh=0$ in $\fg$ coincides with the whole Lie algebra $\fg$.
Remark also that the domain $W^+=\{xX : x\in\bbR, \,x>0\}$ can be
naturally identified with $\bbR^+$. From~(\ref{eq.6.1}) we have
$\Ad_{\exp tZ}X = \mathrm{e}^{t\ad_Z} (X) = \cos tX - \sin tY$.

But $K=\{\exp tZ, t\in\bbR\}$ and, as it is easy to verify,
$\exp tZ=E_{11}+\cos t(E_{22}+E_{33})+\sin t Z$.
Thus the map $K\to \fm$,  $\exp tZ\mapsto \Ad_{\exp tZ}X$, is
a one-to-one map and therefore,
$H = \{e\}$, $\fm^R = \fm \backslash \{ 0\}$ and $\fg_H=\fg$.
Moreover, one obtains
\[
G \times W^+ \cong D^+ = G \times_K \fm^{R} =
(G \times_K \fm)\backslash (G \times_K \{0\})  \cong T^+\mathbb{S}^2,
\]
where $T^+\mathbb{S}^2$ is the {\it punctured\/}
tangent bundle $T^+\mathbb{S}^2 = T\mathbb{S}^2 \backslash
\{\mathrm{zero \;section}\}$ of~$\mathbb{S}^2$.
\begin{theorem}\label{th.6.1}
Let $G/K = \mathrm{SO}(3)/\mathrm{SO}(2)=\mathbb{S}^2$. A $2$-form
$\omega$ on the punctured tangent bundle $G \times W^+
\cong T^+\mathbb{S}^2$ of
$\mathbb{S}^2$ defines a $G$-invariant
K\"ahler structure, associated to the canonical complex structure
$J^K_c$, and the corresponding metric
$\omega(J^K_c \cdot,\cdot)$ is  Ricci-flat,
if and only if $\omega$ on
$G \times W^+$ is expressed as
$\omega = \rd \tilde\theta^{\mathbf a}$,
where the vector-function
${\mathbf a}(x) = f'(x)X + \tfrac{c_Z}{\cosh x}Z$,
$c_Z$ being an arbitrary real number and
\begin{equation}\label{eq.6.2}
f'(x)=\sqrt{C\sinh^2 x+c_Z^2\sinh^2 x\cosh^{-2}x+C_1},
\end{equation}
for some real constants $C>0$ and $C_1 \geqslant 0$.

The corresponding $G$-invariant Ricci-flat K\"ahler metric
$\bfg =\bfg(C,C_1,c_Z)$ on
$T^+\mathbb{S}^2\cong G \times W^+$
is uniquely extendable to a smooth complete
metric on the tangent bundle
$T\mathbb{S}^2$ if and only if
$C_1 = 0$ (that is, $\lim_{x\to 0}f'(x)=0)$.
\end{theorem}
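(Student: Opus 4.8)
The plan is to specialize Theorem~\ref{th.5.1} to the pair $\mathrm{SO}(3)/\mathrm{SO}(2)$, to solve the single ODE that condition~$(4)$ imposes, and then to write the metric out explicitly and examine it at the zero section and at infinity. First I record the structure (all already in the text): $\fg=\mathfrak{so}(3)$, $\fa=\bbR X$, $\fh=\fz(\fh)=0$, $\fg_H=\fg$, the positive restricted root system is $\Sigma^{+}=\{\varepsilon\}$ with $\varepsilon'(X)=1$ and $m_\varepsilon=1$, by \eqref{eq.6.1} the pair $\{\xi_\varepsilon^{1},\zeta_\varepsilon^{1}\}$ of Lemma~\ref{le.4.2} is $\{Y,Z\}$, and $\Sigma_H\cap\Sigma^{+}=\Sigma^{+}$, $\fm^{+}_{\tw}=0$. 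Hence the vector-function \eqref{eq.5.3} reduces to $\mathbf{a}(x)=f'(x)X+\frac{c^{\fk}}{\cosh x}Z+\frac{c^{\fm}}{\sinh x}Y$ with $z_\fh=0$, the commutation relations \eqref{eq.5.5} hold automatically because by \eqref{eq.6.1} every $\ad$ occurring there maps $\fm^{+}=\bbR Y$ into $\fa=\bbR X=\ker R_x=\ker S_x$, and condition~$(3)$ is empty. Evaluating the entries of $\mathbf{w}_H$ in Theorem~\ref{th.5.1}$(2)$, using \eqref{eq.5.6} for $w_{{}_\varepsilon^{1}|{}_\varepsilon^{1}}$, I get $w_{1|1}=2f''$, $w_{1|{}_\varepsilon^{1}}=2\bigl(\ri\,c^{\fk}\cosh^{-2}x-c^{\fm}\sinh^{-2}x\bigr)$, $w_{{}_\varepsilon^{1}|{}_\varepsilon^{1}}=4f'/\sinh 2x$, so $(2)$ says $f''>0$, $f'>0$, $\det\mathbf{w}_H>0$, and $(4)$ (there being no $\fm^{+}_{\tw}$-block) says $\det\mathbf{w}_H\equiv\mathrm{const}$.

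Next I solve $\det\mathbf{w}_H\equiv 4C$. This reads $(f'^{2})'=2f'f''=\sinh 2x\bigl(C+(c^{\fk})^{2}\cosh^{-4}x+(c^{\fm})^{2}\sinh^{-4}x\bigr)$, which integrates elementarily to
\[
f'(x)^{2}=C\cosh^{2}x-(c^{\fk})^{2}\cosh^{-2}x-(c^{\fm})^{2}\sinh^{-2}x+C_{2}.
\]
Since $f'(x)^{2}\geq0$ must hold on all of $W^{+}=(0,\infty)$ and the term $-(c^{\fm})^{2}\sinh^{-2}x$ tends to $-\infty$ as $x\to0^{+}$, necessarily $c^{\fm}=0$; rewriting $C\cosh^{2}x-(c^{\fk})^{2}\cosh^{-2}x=C\sinh^{2}x+(c^{\fk})^{2}\sinh^{2}x\cosh^{-2}x+(C-(c^{\fk})^{2})$ and putting $c_Z:=c^{\fk}$, $C_{1}:=C+C_{2}-(c^{\fk})^{2}$ gives exactly \eqref{eq.6.2}. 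Positivity of $\det\mathbf{w}_H=4C$ forces $C>0$, and $f'(x)^{2}\to C_{1}$ as $x\to0^{+}$ together with $f'(x)^{2}\geq0$ forces $C_{1}\geq0$; conversely, for any $C>0$, $C_{1}\geq0$, $c_Z\in\bbR$ the radicand in \eqref{eq.6.2} exceeds $C\sinh^{2}x>0$ on $(0,\infty)$, so $f'=+\sqrt{\,\cdot\,}$ is smooth and positive there, $(f'^{2})'=C\sinh 2x+2c_Z^{2}\sinh x\cosh^{-3}x>0$ gives $f''>0$, and $\det\mathbf{w}_H=4C>0$; thus $(1)$--$(4)$ all hold and, by Theorems~\ref{th.5.1} and~\ref{th.4.8}, $\widetilde\omega=\rd\widetilde\theta^{\mathbf{a}}\in\cR(G\times W^{+})$ descends to the asserted Ricci-flat K\"ahler metric on $T^{+}\mathbb{S}^{2}$. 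This proves the first part.

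For the completeness claim I would write the metric out. Using \eqref{eq.4.27}, \eqref{eq.4.29} and the identities $J^{K}_c\hat X=\partial/\partial x$, $J^{K}_c\hat Y=-\coth x\,\hat Z$ (with $\hat X=X^{l}$, $\hat Y=Y^{l}$, $\hat Z=Z^{l}$), a direct evaluation of $\ri\widetilde\omega(T_j,\overline{T_k})$ gives, on $\mathrm{SO}(3)\times(0,\infty)$,
\begin{multline*}
\bfg=f''\bigl(\rd x^{2}+(\theta^{X})^{2}\bigr)+f'\coth x\,(\theta^{Y})^{2}+f'\tanh x\,(\theta^{Z})^{2}\\
-\tfrac{2c_Z}{\cosh x}\,\rd x\,\theta^{Y}-\tfrac{2c_Z\sinh x}{\cosh^{2}x}\,\theta^{X}\theta^{Z},
\end{multline*}
where $\theta^{X},\theta^{Y},\theta^{Z}$ is the left-invariant coframe of $\mathrm{SO}(3)$ dual to $X^{l},Y^{l},Z^{l}$ (products of $1$-forms symmetrized). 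As $x\to\infty$, $f'(x)^{2}\sim\tfrac{C}{4}e^{2x}$, hence $f''\sim\tfrac{\sqrt C}{2}e^{x}\to\infty$ and $\int^{\infty}\sqrt{f''}\,\rd x=\infty$, so $\bfg$ is complete (and asymptotically conical) towards $x=\infty$ for every admissible $(C,C_{1},c_Z)$. As $x\to0^{+}$ the $\hat Z$-circle — the $\mathrm{SO}(2)$-fibre of $\mathrm{SO}(3)\to\mathrm{SO}(3)/\mathrm{SO}(2)=\mathbb{S}^{2}$, the zero section — collapses. If $C_{1}>0$ then $f'(0)=\sqrt{C_{1}}>0$, so $\bfg(\hat Y,\hat Y)=f'(x)\coth x\to+\infty$; since along any sequence $q_x=[e,xX]\to[e,0]_K$ the vector $\hat Y(q_x)$ converges in $T(T\mathbb{S}^{2})$ to a nonzero vector tangent to the zero section, no continuous (in particular no smooth) extension of $\bfg$ to $T\mathbb{S}^{2}$ can exist, and $\int_{0}\sqrt{f''}\,\rd x<\infty$ shows the metric on $T^{+}\mathbb{S}^{2}$ is itself incomplete. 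If $C_{1}=0$ then $f'(x)^{2}=C\sinh^{2}x+c_Z^{2}\tanh^{2}x=(C+c_Z^{2})x^{2}+O(x^{4})$ is an even function with a double zero at $0$ and positive leading coefficient, so $f'$ extends to an odd smooth function with $f'(x)/x\to\sqrt{C+c_Z^{2}}$ and $f''$ to an even smooth function with $f''(0)=\sqrt{C+c_Z^{2}}>0$; then all coefficients above are smooth (even, resp.\ odd), $\bfg(\hat Z,\hat Z)=f'\tanh x\sim\sqrt{C+c_Z^{2}}\,x^{2}$, and the ``no cone angle'' identity $\frac{\rd}{\rd x}\sqrt{f'(x)\tanh x}\,\big|_{0}=(C+c_Z^{2})^{1/4}=\sqrt{f''(0)}$ holds; passing to Cartesian coordinates on the collapsing fibre $\bbR^{2}$, the radial--angular block $f''\rd x^{2}+f'\tanh x\,(\theta^{Z})^{2}$ becomes $\sqrt{C+c_Z^{2}}(\rd v_{1}^{2}+\rd v_{2}^{2})$ to leading order, and the remaining terms (in particular the $c_Z$-cross terms, which carry factors $\sinh x$ and $\cosh^{-1}x$ of the correct parity) assemble into smooth tensors, so $\bfg$ extends to a smooth metric $\bfg_{0}$ on $T\mathbb{S}^{2}$; this $\bfg_{0}$ stays positive-definite (e.g.\ the $(\partial/\partial x,\hat Y)$-block has determinant $f''f'\coth x-c_Z^{2}\cosh^{-2}x\to C>0$, and likewise the collapsing block after rescaling), is closed and Ricci-flat by continuity (Theorem~\ref{th.5.1}$(4)$ with constant $4C$), and is complete since $x=\infty$ is its only end. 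Uniqueness of $\bfg_{0}$ is immediate from the density of $T^{+}\mathbb{S}^{2}$ in $T\mathbb{S}^{2}$; alternatively the smoothness test at $x=0$ can be phrased through Corollary~\ref{co.4.10}.

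The main obstacle is the $C_{1}=0$ half of this last step: proving that the metric extends \emph{smoothly}, not merely continuously, across the zero section — that the $\hat Z$-circle collapses exactly as in ordinary polar coordinates (the content of $\frac{\rd}{\rd x}\sqrt{f'\tanh x}\,\big|_{0}=\sqrt{f''(0)}$, which is precisely what singles out $C_{1}=0$) \emph{and} that the cross terms forced by $c_Z\neq0$ organize into smooth tensors in Cartesian fibre coordinates. Everything else is either a mechanical specialization of Theorem~\ref{th.5.1} or an elementary one-variable estimate.
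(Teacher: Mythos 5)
Your first half is essentially the paper's own argument and is correct: you specialize Theorem~\ref{th.5.1}, note that \eqref{eq.5.5} holds trivially for $\mathfrak{so}(3)$, compute the entries of $\mathbf w_H$, integrate condition $(4)$, rule out $c_Y\neq 0$ because the radicand would become negative near $x=0$, and recover \eqref{eq.6.2}; your argument that $C_1>0$ already obstructs a continuous extension (via $\bfg(\hat Y,\hat Y)=f'\coth x\to\infty$ against vectors converging to a nonzero tangent vector at the zero section) is also fine and even a bit more direct than the paper's. The genuine gap is exactly the step you flag yourself: for $C_1=0$ you never prove that the metric (equivalently, the K\"ahler form) extends \emph{smoothly} across the zero section. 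The cone-angle identity $\tfrac{\rd}{\rd x}\sqrt{f'\tanh x}\,\big|_{0}=\sqrt{f''(0)}$ is necessary but not sufficient: smoothness in Cartesian fibre coordinates needs the full parity/evenness structure of all coefficients (e.g.\ $\rd x^2+(x^2+x^3)\rd\phi^2$ satisfies the cone-angle condition at $0$ but is not smooth there), and when $c_Z\neq 0$ the terms $\rd x\,\theta^Y$ and $\theta^X\theta^Z$ couple the collapsing $\hat Z$/radial directions to the non-collapsing ones, so "the remaining terms assemble into smooth tensors" is precisely what has to be computed, not asserted. The paper avoids polar coordinates entirely: it pulls the form back to $G\times\fm$ and exhibits the explicit left $G$-invariant, right $K$-invariant closed extension $\Delta$ of \eqref{eq.6.12}, whose coefficients $f'(|w|)/|w|$, $\tfrac{1}{|w|}\bigl(f'(|w|)/|w|\bigr)'$, $c_Z\sinh|w|/(|w|\cosh^2|w|)$ are even real-analytic functions of $|w|$ exactly when $C_1=0$ (see \eqref{eq.6.13}); it then identifies $\Delta$ with $\tio^R$ on $G\times\fm^R$ by invariance and kernel arguments, and obtains positive-definiteness (hence the K\"ahler property, with Ricci-flatness following by continuity) at the zero section from Corollary~\ref{co.4.10} via $\lim_{x\to0}\mathbf w_H(x)$. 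You should either carry out your Cartesian-coordinate verification in full or adopt such an explicit invariant extension; as written, this half of the theorem is not proved.

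There is also a flaw in your completeness argument when $c_Z\neq0$: the inequality implicit in "$\int^{\infty}\sqrt{f''}\,\rd x=\infty$ gives completeness", namely $\bfg\geqslant f''\,\rd x^2$ as quadratic forms, is false pointwise because of the cross term $-\tfrac{2c_Z}{\cosh x}\,\rd x\,\theta^Y$. The correct radial estimate is $|\rd x(v)|\leqslant f_U(x)\,\|v\|$ with $f_U^{-2}=f''-c_Z^2\sinh x/(f'\cosh^3 x)$, obtained by completing the square in the $(\rd x,\theta^Y)$-block or, as in Lemma~\ref{le.6.2}, by computing the Hamiltonian vector field of $x$ and applying Cauchy-Schwarz; since $f_U^{-1}(x)\sim(\sqrt C\cosh x)^{1/2}$, one still gets $\int^{\infty}f_U^{-1}\,\rd s=\infty$, hence the distance from $G\times\{b\}$ to $G\times\{c\}$ diverges as $c\to\infty$, which together with compactness of the region $\{x\leqslant b\}$ after adjoining the zero section yields completeness. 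So your conclusion survives, but the estimate you invoke must be replaced by this one (the paper additionally justifies that the radial flow is by geodesics, using the cohomogeneity-one fact from~\cite{Al}, in order to compute the distance exactly; for mere completeness the lower bound above suffices).
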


\begin{proof}
By Theorem~\ref{th.5.1} we have to describe all
vector-functions $\mathbf{a} \colon \bbR^+\to \fg$
($\fg_H=\fg$)  satisfying
Conditions $(1)\!\!-\!\!(4)$ of that theorem. Then the 2-form
$\tio=\rd \tith^{\mathbf{a}}$ belongs to the space
$\cR(G\times W^+)$. Remark here that since $H=\{e\}$,
one has $G/H=G$ and  $\tio=\omega$.

By their definitions, for $R_x\eqdef R_{xX}$
and $S_x\eqdef S_{xX}$, $xX\in W^+$,
\begin{equation}\label{eq.6.3}
\begin{split}\textstyle
R_x|_{\fm_{\varepsilon}\oplus\fk_{\varepsilon}}
=\frac{1}{\cosh x}
\mathrm{Id}_{\fm_{\varepsilon}\oplus\fk_{\varepsilon}}
\quad & \text{and}\quad \textstyle
S_x|_{\fm_{\varepsilon}\oplus\fk_{\varepsilon}}
=\frac{1}{\sinh x}
\ad_X|_{\fm_{\varepsilon}\oplus\fk_{\varepsilon}}.
\end{split}
\end{equation}

Put $\xi^{1}_{\varepsilon}=Y\in \fm_{\varepsilon}$.
In the notation of the previous subsection,
$\zeta^{1}_{\varepsilon}=Z\in \fk_{\varepsilon}$.
Now we have to verify Conditions $(1)\!\!-\!\!(4)$ of
Theorem~\ref{th.5.1} for the vector-function
\begin{equation*}
\mathbf{a}(x)=a^\fa(x)+a^\fk(x)+a^\fm(x)=
f'(x)X +f_Z(x)Z +f_Y(x)Y,
\end{equation*}
where
$$
f\in C^\infty(\bbR^+,\bbR),\quad
f_Y(x)=\frac{c_Y}{\sinh x},\quad
f_Z(x)=\frac{c_Z}{\cosh x},\quad
\ c_Y, c_Z\in\bbR.
$$
Remark here that $\fh=0$ and, consequently, the center
$\fz(\fh)=0$. Consider now Conditions~(\ref{eq.5.5}). We
have $\fm^+=\bbR Y$. Using~(\ref{eq.6.3}), we can rewrite the
first condition in~(\ref{eq.5.5}) for the vector
$Y=\xi^1_\varepsilon$ as
\begin{equation}\label{eq.6.4}\textstyle
\frac{1}{\cosh x}\cdot R_x
\bigl[f_Z(x)Z,Y\bigr]
+\frac{1}{\sinh x}\cdot S_x
\bigl[f_Z(x)Z, \ad_X Y\bigr]=0.
\end{equation}
The first term in~(\ref{eq.6.4}) vanishes because
$[Z,Y]=X\in\fa$ and
$R_x(\fa)=0$; the second term vanishes because
$\ad_X Y=-Z$.

Consider now the second condition in~(\ref{eq.5.5}).
Using~(\ref{eq.6.3}) again, we can rewrite this
condition for the vector $Y\in\fm_\varepsilon$ as
\begin{equation}\label{eq.6.5}\textstyle
\frac{1}{\sinh x}\cdot R_x
\bigl[f_Y(x)Y,\ad_X Y\bigr]
-\frac{1}{\cosh x}\cdot S_x
\bigl[f_Y(x)Y, Y\bigr]=0.
\end{equation}
The first term vanishes because
$\ad_X Y=-Z$, $[Y,-Z]= X\in\fa$ and $R_x(\fa)=0$.
Thus condition~(\ref{eq.6.5}) holds.

It is easy to verify that the $2\times 2$ Hermitian matrix
${\mathbf w}_H(x)$ (see Theorem \ref{th.5.1},$(2)$) is the matrix
with entries
\begin{equation}\label{eq.6.6}
w_{11}(x)=2f''(x), \
w_{1|{}_\varepsilon^{1}}(x)=2\Bigl(\ri  \tfrac{c_Z}{\cosh^2 x}
-\tfrac{c_Y}{\sinh^2 x} \Bigr), \
w_{{}_\varepsilon^{1}|{}_\varepsilon^{1}}(x)
 = \tfrac{2f'(x)}{\cosh x \sinh x}.
\end{equation}
Calculating the determinant of the Hermitian matrix
${\mathbf w}_H(x)$ (as $\fm_\tw^+=0$ in our case)
we obtain that by Theorem
\ref{th.5.1},$(4)$, the corresponding form
$\tio=\rd \tith^{\mathbf{a}}$ belongs to the space
$\cR(G\times W^+)$ if and only if
\begin{equation}\label{eq.6.7}
f''(x)>0 \quad\text{and}\quad
f''(x)f'(x)
=\Bigl(C+\frac{c_Z^2}{\cosh^4 x}+\frac{c_Y^2}{\sinh^4 x}\Bigr)
\cosh x\sinh x
\end{equation}
for all $x\in\bbR^+$ and for some constant $C\in\bbR^+$. Then
\begin{equation}\label{eq.6.8}
f'(x)=\sqrt{
C\cosh^2 x-c_Z^2\cosh^{-2} x
-c_Y^2\sinh^{-2} x+C_2},
\end{equation}
where $C_2\in \bbR$. By~(\ref{eq.6.7}),
$f''(x)>0$ if and only if $f'(x)>0$. Therefore there exists
a solution of~(\ref{eq.6.7}) on the whole semi-axis if and only if $c_Y=0$.
Putting $C_2=c_Z^2-C+C_1$ one can rewrite~(\ref{eq.6.8})
(with $c_Y=0$)
in the form~(\ref{eq.6.2}).

Let us prove the last statement of the theorem. By its definition,
$\tio=\rd \widetilde\theta^{\mathbf{a}}$.
Since $\mathbf{a}(x)=f'(x)X+\frac{c_Z}{\cosh x}Z$, by the
expression~\eqref{eq.5.4} at the point
$(g,x)\in G\times W^+$ ($W^+=\bbR^+$)
we have
\begin{equation}\label{eq.6.9}
\begin{split}
\tio_{(g,x)} &\big((\xi_1^l(g),t_1\tfrac{\partial}{\partial x}),
(\xi_2^l(g),t_2\tfrac{\partial}{\partial x})\big)=
-\bigl\langle f'(x)X+\tfrac{c_Z}{\cosh x}Z,[\xi_1,\xi_2] \bigr\rangle \\
&
+f''(x)\Bigl(t_1\bigl\langle  X, \xi_2 \bigr\rangle
-t_2\bigl\langle X, \xi_1 \bigr\rangle\Bigr) -
\tfrac{c_Z\sinh x}{\cosh^2 x}\Bigl(t_1\bigl\langle Z, \xi_2 \bigr\rangle
-t_2\bigl\langle Z, \xi_1 \bigr\rangle\Bigr),
\end{split}
\end{equation}
where $\xi_1,\xi_2\in\fg=T_eG$ and
$t_1,t_2\in\bbR$.
Our aim is to find the expression for the form
$\omega^R=((f^+)^{-1})^*\omega$ on the space
$G\times_K\fm^R\cong T^+(G/K)$ where, recall,
$f^+\!\colon G/H\times W^+\to G\times_K\fm^R$ is a
$G$-equivariant diffeomorphism.
But by the diagram~\eqref{eq.4.22}
there exists a unique form $\widetilde\omega^R$ on $G\times\fm^R$
such that
\begin{equation}\label{eq.6.10}
\widetilde\omega^R=\pi^*\omega^R
\quad\text{and}\quad
\tio=\mathrm{id}^*\widetilde\omega^R.
\end{equation}

Thus it is sufficient to calculate the form $\widetilde\omega^R$
on the space $G\times\fm^R$.
By the second expression in~\eqref{eq.6.10},
$$
\tio^R_{(g,xX)}\big(\big(\xi_1^l(g),t_1 X\big),
\big(\xi_2^l(g),t_2 X\big)\big)=
\tio_{(g,x)}\big( \big(\xi_1^l(g),t_1\tfrac{\partial}{\partial x}\big),
\big(\xi_2^l(g),t_2\tfrac{\partial}{\partial x} \big)\big)
$$
and because $\langle X, X\rangle=1$, one gets
\begin{equation}\label{eq.6.11}
\begin{split}
&\tio^R_{(g,xX)}\big((\xi_1^l(g),t_1 X),(\xi_2^l(g),t_2 X)\big) \\
   & = -\bigl\langle \tfrac{f'(x)}{x}xX
+\tfrac{c_Z}{\cosh x}Z,[\xi_1,\xi_2] \bigr\rangle
+\bigl\langle f''(x) t_1 X, \xi_2 \bigr\rangle
-\bigl\langle f''(x) t_2 X, \xi_1 \bigr\rangle \\
& \quad\; -\tfrac{c_Z\sinh x}{x\cosh^2 x}
\Bigl(\bigl\langle t_1 X, xX \bigr\rangle \bigl\langle Z, \xi_2 \bigr\rangle
-\bigl\langle t_2 X, xX \bigr\rangle\bigl\langle Z, \xi_1 \bigr\rangle\Bigr).
\end{split}
\end{equation}
Consider on  the whole tangent space
$T_{(g,w)}(G\times\fm^R)$ ($w\ne 0$), the
following bilinear form~$\Delta$,
\begin{equation}\label{eq.6.12}
\begin{split}
&\Delta_{(g,w)} \big((\xi_1^l(g),u_1),(\xi_2^l(g),u_2)\big)=
-\bigl\langle \tfrac{f'(|w|)}{|w|} w
+\tfrac{c_Z}{\cosh |w|}Z,[\xi_1,\xi_2] \bigr\rangle \\
& +\bigl\langle \tfrac{f'(|w|)}{|w|}u_1
+\left(\tfrac{f'(|w|)}{|w|}\right)'
\tfrac{\langle w ,u_1 \rangle}{|w|} w, \xi_2 \bigr\rangle \\
&-\bigl\langle \tfrac{f'(|w|)}{|w|}u_2
+\left(\tfrac{f'(|w|)}{|w|}\right)'
\tfrac{\langle w ,u_2 \rangle}{|w|} w,
\xi_1 \bigr\rangle \\
& -\tfrac{c_Z\sinh |w|}{|w|\cosh^2 |w|}
\Bigl(
\bigl\langle u_1, w \bigr\rangle \bigl\langle Z, \xi_2 \bigr\rangle
-\bigl\langle u_2, w \bigr\rangle\bigl\langle Z, \xi_1 \bigr\rangle
+\bigl\langle Z, [u_1,u_2]\bigr\rangle
\Bigr),
\end{split}
\end{equation}
where $\xi_1,\xi_2\in\fg=T_eG$, $u_1,u_2\in \fm=T_w\fm^R$.
Here $|w|^2={\langle w, w\rangle}$ ($|xX|= x$).
It is clear that this form is skew-symmetric.
Since
$\left(\tfrac{f'(|w|)}{|w|}\right)'=\tfrac{f''(|w|)}{|w|}
-\tfrac{f'(|w|)}{|w|^2}$ and $[t_1 X,t_2 X]=0$,
it is easy to verify that
$$
\Delta_{(g,xX)}\big((\xi_1^l(g),t_1X),(\xi_2^l(g),t_2 X)\big)=
\tio^R_{(g,xX)}\big((\xi_1^l(g),t_1 X),(\xi_2^l(g),t_2 X)\big),
$$
i.e.\ the restrictions of $\tio^R$ and $\Delta$
to $G\times W^+$ coincide.
Now to prove that the differential forms $\tio^R$ and $\Delta$
coincide on the whole space $G\times\fm^R$ it is
sufficient to show that the form $\Delta$ is
left $G$-invariant, right
$K$-invariant and its kernel
contains (and therefore coincides with) the subbundle $\sK$
defined by relation~\eqref{eq.4.23}.

Since for each $k\in K$ the scalar product $\langle \cdot,\cdot \rangle$
is $\Ad_k$-invariant,  $\Ad_k$ is an automorphism of $\fg$  and
$\Ad_k(Z)=Z$ ($k=\exp tZ$ for some $t\in \bbR$)
whence~\eqref{eq.3.22}
holds, that is,
$\Delta$ is left $G$-invariant and right $K$-invariant.
We now prove that $\sK \subset \ker \Delta$. Taking into
account that by definition $\langle Z,\fm \rangle=0$,
$\langle Z, Z\rangle=1$,
by the invariance of the scalar product, $\langle \xi,[\xi,\eta] \rangle=0$,
$\forall \xi,\eta\in\fg$,
and by~(\ref{eq.6.1})
$$
\big\langle Z, [u_1,[w,Z]]\big\rangle
=\big\langle Z, [[Z,w],u_1]\big\rangle
=\big\langle [Z,[Z,w]],u_1\big\rangle
=-\langle w,u_1\rangle,
$$
we obtain that
\begin{equation*}
\begin{split}
& \Delta_{(g,w)}\big((\xi_1^l(g),u_1),(Z^l(g),[w,Z])\big)=
-\bigl\langle \tfrac{f'(|w|)}{|w|} w
+\tfrac{c_Z}{\cosh x}Z,[\xi_1,Z] \bigr\rangle  \\
& \qquad +\bigl\langle \tfrac{f'(|w|)}{|w|}u_1
+\left(\tfrac{f'(|w|)}{|w|}\right)'
\tfrac{\langle w ,u_1 \rangle}{|w|} w, Z \bigr\rangle
-\bigl\langle \tfrac{f'(|w|)}{|w|}[w,Z] \\
& \qquad +\left(\tfrac{f'(|w|)}{|w|}\right)'
\tfrac{\langle w ,[w,Z] \rangle}{|w|} w,
\xi_1 \bigr\rangle  \\
& \qquad -\tfrac{c_Z\sinh |w|}{|w|\cosh^2 |w|}
\Bigl(\bigl\langle u_1, w \bigr\rangle \bigl\langle Z, Z \bigr\rangle
-\bigl\langle [w,Z], w \bigr\rangle\bigl\langle Z, \xi_1 \bigr\rangle
+\bigl\langle Z, [u_1,[w,Z]]\bigr\rangle
\Bigr)  \\
& =  -\bigl\langle \tfrac{f'(|w|)}{|w|} w,[\xi_1,Z] \bigr\rangle
 -\bigl\langle \tfrac{f'(|w|)}{|w|}[w,Z], \xi_1 \bigr\rangle
 -\tfrac{c_Z\sinh |w|}{|w|\cosh^2 |w|}
\Bigl(\bigl\langle u_1, w \bigr\rangle
-\bigl\langle w, u_1\bigr\rangle\Bigr)=0.
\end{split}
\end{equation*}
Thus $\tio^R=\Delta$ on $G\times\fm^R$
($\sK =\ker \Delta$ because the form
$\omega=\tio$ is nondegenerate).
It is easy to verify that there exists
an even real analytic function, $\psi_4(x)$, on
the whole axis $\bbR$, such that
$(f'(x))^2$ is the restriction to
$\bbR^+$ of the function
\begin{equation}\label{eq.6.13}
\psi(x)=C_1+(C+c_Z^2)x^2+\psi_4(x)x^4,
\quad
\psi(x) > C_1,\ x\in \bbR\setminus\{0\}
\end{equation}
(see \eqref{eq.6.2}). Expression~\eqref{eq.6.11}
determines a smooth $2$-form at $(g,0)\in G\times \fm$ if and only if
$\lim_{x\to 0}f'(x)=0$, that is, if and only if
$C_1=0$. In this case, expression~\eqref{eq.6.12}
(which, possibly, is not the unique
expression representing the form $\tio^R$)
determines a smooth $2$-form on the whole space
$G\times \fm$ because
$\tfrac{\sqrt{\psi(x)}}{x}$ and
$\tfrac{1}{x}\Big(\tfrac{\sqrt{\psi(x)}}{x}\Big)'$
are even real analytic functions on
the whole axis.
We will denote this form (extension)
on $G\times \fm$ by
$\tio^R_0$. There exists a unique $2$-form
$\omega^R_0$ on $G\times_K\fm\cong T(G/K)$
such that $\tio^R_0=\pi^*\omega^R_0$.
The forms $\omega^R_0$ and $\omega^R$ coincide, by
construction, on the open submanifold $G\times_K\fm^R\cong T^+(G/K)$,
i.e.\ $\omega^R_0$ is a smooth extension of $\omega^R$.
But  by~(\ref{eq.6.6}) and~(\ref{eq.6.13})
for $C_1=0$
$$
\lim_{x\to 0} w_{11}(x)=2\sqrt{C+c_Z^2},
\quad
\lim_{x\to 0} w_{1|{}_\varepsilon^{1}}(x)
=2\ri  c_Z,
\quad
\lim_{x\to 0} w_{{}_\varepsilon^{1}|{}_\varepsilon^{1}}(x)
 =  2\sqrt{C+c_Z^2},
$$
i.e.\ the corresponding limit $2\times 2$ Hermitian matrix
$\lim_{x\to 0}{\mathbf w}_H(x)$ is positive-definite.
Thus by Corollary~\ref{co.4.10},
$\omega^R_0$ is the K\"ahler form
of the metric ${\mathbf g}_0$ (the extension of ${\mathbf g}$)
on $G\times_K\fm\cong T(G/K)$, for $G/K={\mathbb S}^2$.

Next, we show that the metric ${\mathbf g}_0$ on
$G\times_K\fm\cong T{\mathbb S}^2$ is complete.
To prove this, we consider again the description of the form
$\omega$
in~\eqref{eq.6.9} on the space
$G\times W^+\cong G\times_K\fm^R\cong T^+{\mathbb S}^2$
($G=\mathrm{SO}(3)$, $H=\{e\}$ and
$\tio=\omega$). For our aim it is sufficient to calculate the
distance $\mathrm{dist}(b,c)$ between the compact subsets
$G\times\{b\}$ and $G\times\{c\}$, where
$\mathrm{dist}(b,c)= \inf \{d(p_b,p_c), p_b\in G\times\{b\},
p_c\in G\times\{c\}\}$.
Since the sets
$G\times\{x\}$ are compact, it is clear that the metric
${\mathbf g}_0$ is complete if and only if for some $b>0$
one has $\lim_{c\to\infty}\mathrm{dist}(b,c)=\infty$.

To calculate the function $\mathrm{dist}(b,c)$
note that the tangent bundle $T(G/K)\cong G\times_ K\fm$
is a cohomogeneity-one manifold, i.e.\ the Lie group $G$
acts on this manifold with a codimension-one orbit.
We will use only one fundamental fact
on the structure of these manifolds~\cite{Al}:
A unit smooth vector field $U$ on a $G$-invariant
domain $D\subset T(G/K)$ which is
${\mathbf g}_0$-orthogonal to each $G$-orbit in $D$
is a geodesic vector field,
i.e.\ its integral curves are geodesics of the metric
${\mathbf g}_0$.

We now describe such a vector field $U$ on
$G\times W^+ \cong T^+(G/K)$. Put
\begin{equation}\label{eq.6.14}
f_U(x)=\left({\frac{f'(x) \cosh^3 x}{f''(x)f'(x) \cosh^3 x
- {c^2_Z}\sinh x}}\right)^{1/2},
\ x\in\bbR^+.
\end{equation}
\begin{lemma}\label{le.6.2}
Such a unit vector field $U$ on $G\times W^+$ is determined by
the expression
\begin{equation*}
U(g,x)= f_U(x)\cdot
\left(\frac{c_Z\sinh x}{f'(x) \cosh^{2} x} Y^l(g),
\,\frac{\partial}{\partial x}\right).
\end{equation*}
For the coordinate function
$x$ on $G\times W^+$ the following inequality holds
\begin{equation}\label{eq.6.15}
\big|\rd x_{(g,x)}\big(\xi^l(g),\,t\tfrac{\partial}{\partial x}\big)
\big|\leqslant f_U(x)\cdot
\big\|(\xi^l(g),t\tfrac{\partial}{\partial x})\big\|_{(g,x)},
\end{equation}
where $\big(\xi^l(g),\,t\tfrac{\partial}{\partial x}\big)\in
T_{(g,x)}(G\times W^+)$ and
$\|\cdot\|$ is the norm determined by the metric ${\mathbf g}$.
\end{lemma}
\begin{proof} (Of the Lemma.) Let us rewrite the
expression~\eqref{eq.6.9}
as
\begin{equation}\label{eq.6.16}
\begin{split}
\omega_{(g,x)} & \big(\big(\xi_1^l(g),t_1\frac{\partial}{\partial x}\big),
\big(\xi_2^l(g),t_2\frac{\partial}{\partial x}\big)\big)=
t_1\Bigl(f''(x)\bigl\langle  X, \xi_2 \bigr\rangle
-\frac{c_Z\sinh x}{\cosh^2 x}\bigl\langle Z, \xi_2 \bigr\rangle
\Bigr) \\
&
+ \bigl\langle [f'(x)X+\frac{c_Z}{\cosh x}Z,\xi_2]
-f''(x)t_2X
+\frac{c_Z\sinh x}{\cosh^2 x}t_2Z, \xi_1 \bigr\rangle.
\end{split}
\end{equation}
Therefore for $\xi_2=a X+b Y+c Z$, $a,b,c\in\bbR$, by the
commutation relations~\eqref{eq.6.1} we have
\begin{equation}\label{eq.6.17}
\begin{split}
&\omega_{(g,x)}\big(\big(\xi_1^l(g),t_1\tfrac{\partial}{\partial x}\big),
\big((a X+b Y+c Z)^l(g),t_2\tfrac{\partial}{\partial x}\big)\big)\\
&=t_1\Bigl(f''(x)a-\tfrac{c_Z\sinh x}{\cosh^2 x}c\Bigr)
+\bigl\langle
\bigl(b\tfrac{c_Z}{\cosh x}-f''(x)t_2\bigr)X, \xi_1 \bigr\rangle \\
&\quad +\bigl\langle
\bigl(c f'(x) - a\tfrac{c_Z}{\cosh x}\bigr)Y
+\bigl(\tfrac{c_Z\sinh x}{\cosh^2 x}t_2-b f'(x)\bigr)Z
, \xi_1 \bigr\rangle.
\end{split}
\end{equation}
Since the vector field  $U$ is ${\mathbf g}$-orthogonal to
the subbundle $V\subset T(G\times W^+)$ generated by the vector
fields $(\xi_1^l,0)$, $\xi_1\in\fg$,
then $U$ is $\omega$-orthogonal to the subbundle $J^K_c(V)$
generated by  $(\xi_1^l,t_1 \tfrac{\partial}{\partial x})$,
$\xi_1\in\fg$, $\langle \xi_1, X\rangle=0$, $t_1\in\bbR$,
because by~\eqref{eq.4.27},
\begin{equation}\label{eq.6.18}
J_c^K(X^l,0)=(0,\,\tfrac{\partial}{\partial x})
\quad\text{and}\quad
J_c^K(Y^l,\,0)= (-\tfrac{\cosh x}{\sinh x}Z^l,0)).
\end{equation}

Putting $U=\big((a X+b Y+c Z)^l,\,\tau  \tfrac{\partial}{\partial
x}\big)$, where $a,b,c,\tau$ are functions of $x$,
we obtain the following orthogonality conditions
$$
a f''-c\, \tfrac{c_Z\sinh x}{\cosh^2 x}=0,
\quad
c f' - a\, \tfrac{c_Z}{\cosh x}=0,
\quad
\tau\, \tfrac{c_Z\sinh x}{\cosh^2 x}-b f'=0,
$$
with the solution: $a=0$, $c=0$ and
$b=\tau \tfrac{c_Z\sinh x}{f'\cosh^2 x}$.  Thus
$U= \tau\,\left(\tfrac{c_Z\sinh x}{f'\cosh^2 x} Y^l,\,
\tfrac{\partial}{\partial x}\right)$.
Since $\|U\|\eqdef\omega(J^K_c(U),U)\equiv 1$,
then by~\eqref{eq.6.17} and~(\ref{eq.6.18})
$$
\tau^2 \omega\Bigl(\bigl(
-\tfrac{c_Z}{f'\cosh x} Z^l-X^l,
0\bigr),\bigl(\tfrac{c_Z\sinh x}{f'\cosh^2 x} Y^l,
\tfrac{\partial}{\partial x}\bigr)\Bigr)
= \tau^2 \bigl(f''-\tfrac{c_Z^2\sinh x}{f'\cosh^3 x}\bigr)\equiv 1.
$$
Thus $ U(g,x)= \left(\tfrac{f'(x) \cosh^3 x}{f''(x) f'(x)
\cosh^3 x -c_Z^2\sinh x}\right)^{1/2}
\left(\tfrac{c_Z\sinh x}{f'(x)\cosh^2 x} Y^l(g),\,
\tfrac{\partial}{\partial x}\right)$.

To prove the inequality in the statement let us find
the Hamiltonian vector field $\mathtt{H^x}$ of the
($G$-invariant) function $x$.
Putting $\mathtt{H^x}=\big((a_0 X+b_0 Y+c_0 Z)^l, \,\tau_0
\tfrac{\partial}{\partial x}\big)$, where $a_0,b_0,c_0,\tau_0$ are functions of $x$,
we obtain the following relation
$$
t_1
{=}\rd x \big(\xi_1^l, \,t_1 \tfrac{\partial}{\partial x}\big)
{\eqdef}\omega\big((\xi_1^l,t_1 \tfrac{\partial}{\partial x}), \,\mathtt{H^x}\big)
{=}\omega\big((\xi_1^l, \,t_1 \tfrac{\partial}{\partial x}),
\big((a_0 X+b_0 Y+c_0 Z)^l,\,\tau_0
\tfrac{\partial}{\partial x}\big) \big),
$$
for arbitrary $t_1\in\bbR$, $\xi_1\in\fg$.
Using~\eqref{eq.6.17} again we obtain the following equations:
$$
a_0 f''-c_0\, \tfrac{c_Z\sinh x}{\cosh^2 x}=1, \quad
b_0\, \tfrac{c_Z}{\cosh x}=\tau_0 f'', \quad
c_0 f'= a_0\, \tfrac{c_Z}{\cosh x}, \quad
\tau_0\, \tfrac{c_Z\sinh x}{\cosh^2 x}=b_0 f'
$$
with the following solution: $b_0=0$, $\tau_0=0$ and
\begin{equation}\label{eq.6.19}
a_0=c_0\cdot\tfrac{f'\cosh x}{c_Z},
\qquad
c_0=\tfrac{c_Z\cosh^2 x}{f''f'\cosh^3 x-c_Z^2\sinh x}.
\end{equation}
Thus $\mathtt{H^x}=\big((a_0 X+c_0 Z)^l, 0\big)$.
Since $J^K_c(\mathtt{H^x})=\big(c_0\tfrac{\sinh x}{\cosh x}Y^l,
a_0 \tfrac{\partial}{\partial x}\big)$
we have that
\begin{equation*}
\begin{split}
\|\mathtt{H^x}\|^2\eqdef &
\omega\bigl((c_0\tfrac{\sinh x}{\cosh x}Y^l,
a_0 \tfrac{\partial}{\partial x}),((a_0 X+c_0 Z)^l, 0)\bigr) \\
= & c_0^2 f'\tfrac{\sinh x}{\cosh x}- 2a_0c_0
\tfrac{c_Z\sinh x}{\cosh^2 x} + a_0^2f''.
\end{split}
\end{equation*}
Taking into account~\eqref{eq.6.19} we obtain that
$\|\mathtt{H^x}\|^2
=\tfrac{f' \cosh^3 x}{f'' f' \cosh^3 x -c_Z^2\sinh x}$.
Hence $\|\mathtt{H^x}\|=f_U$.
Now, by the Cauchy-Schwarz inequality for metrics one has
\begin{equation*}
\begin{split}
\big|\rd x(\xi_1^l,\,t_1\tfrac{\partial}{\partial x})\big|
= & \;
\big|\omega((\xi_1^l,\,t_1 \tfrac{\partial}{\partial x}), \mathtt{H^x})\big|
=\big|{\mathbf g}((\xi_1^l,\,t_1 \tfrac{\partial}{\partial x}),
J^K_c(\mathtt{H^x}))\big| \\ \noalign{\smallskip}
\; \leqslant & \;
\big\|J^K_c (\mathtt{H^x})\big\|\cdot \big\|(\xi^l(g),\,
t_1 \tfrac{\partial}{\partial x})\big\|
=\|\mathtt{H^x}\|\cdot \big\|(\xi^l(g),\, t_1
\tfrac{\partial}{\partial x})\big\|,
\end{split}
\end{equation*}
that is, we obtain~\eqref{eq.6.15}.
\end{proof}

Using now the vector field $U$ we shall
calculate the distance between the level sets $G\times\{b\}$ and
$G\times\{c\}$ in $G\times W^+$
with respect to the metric ${\mathbf g}$.
Let $\gamma(t)=(\widehat g(t),\widehat x(t))$, $t\in[0,T]$, be the
integral curve of the vector field $U$ with
initial point $p_b$ in $G\times\{b\}$, that is, $\widehat x(0)=b$.
There exists a function
$h$ on  ${\mathbb R}^+$ such that
the function $h(\widehat x(t))$ is linear in $t$.
It is easy to verify that
$\displaystyle h(x)=\int_{b}^{x}\frac{1}{f_U(s)}\, \rd s$, because
\begin{equation*}
\frac{\rd}{\rd t}h\big(\widehat x(t)\big)
=  h'\big(\widehat x(t)\big)\cdot \widehat x'(t)
=h'\big(\widehat x(t)\big)\cdot \rd x\big(\gamma'(t)\big)
= h'\big(\widehat x(t)\big)\cdot
\big(f_U(\widehat x(t) )\big) =1.
\end{equation*}
Suppose that $p_c\in G\times\{c\}$, where $p_c=\gamma(t_c)$,
$t_c\in[0, T]$.
Since the curve $\gamma$ is a geodesic, the length of
the curve $\gamma(t)$, $t\in[0,t_c]$, from
$p_b$ to $p_c$ is $t_c=h(x(p_c))-h \big(x(p_b)\big)=h(c)-h(b)$.
Thus $\mathrm{dist}(b,c)\geqslant h(c)-h(b)$.

For any other curve $\gamma_1(t)=\big(\widehat g_1(t),
\widehat x_1(t)\big)$,
with $\|\gamma_1'(t)\|=1$, starting at the point $p_b$,  and ending
at a point $p^1_c\in G\times\{c\}$, $p^1_c= \gamma_1(t_c^1)$
(of length $t^1_c$), we have by Lemma~\ref{le.6.2}
\begin{equation*}
\frac{\rd}{\rd t}h\big(\widehat x_1(t)\big)
=h'\big(\widehat x_1(t)\big)\cdot \rd x\big(\gamma_1'(t)\big)
\leqslant \frac{1}{f_U\big(\widehat x_1(t)\big)}\cdot
f_U(\widehat x_1(t))\big) \cdot \|\gamma'_1(t)\|=1.
\end{equation*}

Thus $h(c)-h(b)\leqslant t^1_c$ and the length $t^1_c$ of
the curve $\gamma_1$  from $p_b$ to $p^1_c$ is not
less than the length of the curve $\gamma(t)$, $t\in[0,t_c]$.
So the distance between the level surfaces $G\times\{b\}$ and
$G\times\{c\}$ is  $|h(c)-h(b)|$.

Now, since  by~\eqref{eq.6.2}
and~\eqref{eq.6.7} for $C_1=0$
\begin{equation}\label{eq.6.20}
\begin{split}
f'(x)&=\sqrt{C\sinh^2 x+c_Z^2\sinh^2 x\cosh^{-2}x}, \\
f''(x)&={\bigl(C\sinh x\cosh x
+c_Z^2 \sinh x\cosh^{-3}x\bigr)}/{f'(x)},
\end{split}
\end{equation}
we see that $f'(x)\sim \sqrt{C}\sinh x$,
$f''(x)\sim \sqrt{C}\sinh x$ and, by~(\ref{eq.6.14}),
$\frac{1}{f_U(x)}\sim\left(\sqrt{C}\sinh x\right)^{1/2} $ as
$x\to\infty$. Therefore
$\lim_{x\to\infty} h(x)=\infty$. Hence the metric
${\mathbf g}_0={\mathbf g}_0(C,c_Z, 0)$
(that is, for $C_1=0$)  on the tangent bundle
$T(G/K)\cong G\times_K\fm$ is complete for any $C>0$, $c_Z\in\bbR$.
\end{proof}

The proof of Theorem~\ref{th.6.1} above
and Corollary~\ref{co.5.4} immediately imply the following
\begin{corollary}\label{co.6.3}
Let $G/K = \mathrm{SO}(3)/\mathrm{SO}(2)=\mathbb{S}^2$. A $2$-form
$\omega$ on the punctured tangent bundle $G \times W^+
\cong T^+\mathbb{S}^2$ of $\mathbb{S}^2$
determines a $G$-invariant
K\"ahler structure, associated to the canonical complex structure
$J^K_c$  if and only if $\omega$ on
$G \times W^+$ is expressed as
$\omega = \rd \tilde\theta^{\mathbf a}$,
for the vector-function
${\mathbf a}(x) = f'(x)X + \tfrac{c_Z}{\cosh x}Z+\tfrac{c_Y}{\sinh x}Y$,
where $c_Z, c_Y\in\bbR$, $f\in C^\infty(\bbR^+,\bbR)$ and
$$
f''(x)>0,\ \forall x\in\bbR^+, \quad
\frac{f''(x)f'(x)}{\cosh x\sinh x}
-\frac{c_Z^2}{\cosh^4 x}-\frac{c_Y^2}{\sinh^4 x}>0,
\ \forall x\in\bbR^+.
$$
In particular, if $c_Z=c_Y=0$ then the function $(g,x)\mapsto 2f(x)$
on $G\times W^+$ is a potential function
of the K\"ahler structure $(\omega,J^K_c)$.
\end{corollary}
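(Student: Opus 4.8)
The plan is to read the statement off directly from the analysis already carried out in the proof of Theorem~\ref{th.6.1}, keeping only the conditions produced by items $(1)$--$(3)$ of Theorem~\ref{th.5.1} (the K\"ahler conditions) and discarding item $(4)$ (Ricci-flatness), together with an appeal to Corollary~\ref{co.5.4} for the potential-function clause. First I would record the structural data for $G/K=\mathbb{S}^2$ fixed at the start of Section~\ref{s.6}: $\fh=0$, hence $\fz(\fh)=0$ and $\fg_\fh=\fg_H=\fg$; $\Sigma=\{\pm\varepsilon\}$ with $\fm^+=\fm_\varepsilon=\bbR Y$, $\fk^+=\fk_\varepsilon=\bbR Z$; $H=\{e\}$, so that $\pi_H\times\mathrm{id}$ is the identity on $G\times W^+$ and $\tio=\omega$; and $\Sigma_H=\Sigma_\fh=\Sigma$, whence $\Sigma^+\setminus\Sigma_H=\emptyset$ and $\fm^+_{\tw}=\fk^+_{\tw}=0$. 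Consequently the general vector-function~\eqref{eq.5.3} of Theorem~\ref{th.5.1} reduces here to $\mathbf a(x)=f'(x)X+\tfrac{c_\fk}{\cosh x}\zeta^1_\varepsilon+\tfrac{c_\fm}{\sinh x}\xi^1_\varepsilon=f'(x)X+\tfrac{c_Z}{\cosh x}Z+\tfrac{c_Y}{\sinh x}Y$, with no $\fz(\fh)$-term, for arbitrary $c_Z,c_Y\in\bbR$ and $f\in C^\infty(\bbR^+,\bbR)$; and by Theorem~\ref{th.5.1} every element of $\cK(G\times W^+)$ is $\rd\tith^{\mathbf a}$ for exactly one such $\mathbf a$ (up to the real additive constant in $f$), and conversely any such $\rd\tith^{\mathbf a}$ satisfying $(1)$--$(3)$ lies in $\cK(G\times W^+)$.

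Next I would check that conditions $(1)$--$(3)$ of Theorem~\ref{th.5.1} collapse to the two inequalities in the statement. Condition $(3)$ is vacuous since $\fm^+_{\tw}=0$. Condition $(1)$: the commutation relations~\eqref{eq.5.5} hold automatically for the above $\mathbf a$ --- this is precisely the computation~\eqref{eq.6.4}--\eqref{eq.6.5} already performed in the proof of Theorem~\ref{th.6.1}, where both terms vanish because $\fm^+=\bbR Y$ is one-dimensional, $\ad_X Y=-Z$, and the remaining bracket lands in $\fa\subset\ker R_x$; the auxiliary clause ``$z_\fh=0$ if $a^\fk\equiv0$ and $G/K$ irreducible'' is automatic, $z_\fh$ being $0$ already. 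Condition $(2)$: by~\eqref{eq.6.6} the Hermitian matrix $\mathbf w_H(x)$ is the $2\times2$ matrix with $w_{11}=2f''(x)$, $w_{1|{}^1_\varepsilon}=2\bigl(\ri c_Z\cosh^{-2}x-c_Y\sinh^{-2}x\bigr)$, $w_{{}^1_\varepsilon|{}^1_\varepsilon}=2f'(x)/(\cosh x\sinh x)$, which is positive-definite exactly when $w_{11}>0$ and $\det\mathbf w_H>0$, i.e.\ when $f''(x)>0$ and $\tfrac{f''(x)f'(x)}{\cosh x\sinh x}-\tfrac{c_Z^2}{\cosh^4 x}-\tfrac{c_Y^2}{\sinh^4 x}>0$ for all $x\in\bbR^+$. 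Invoking the converse part of Theorem~\ref{th.5.1} together with Theorem~\ref{th.4.8}$(i)$ then yields the claimed equivalence between $G$-invariant K\"ahler structures $(\omega,J^K_c)$ on $G\times W^+\cong T^+\mathbb{S}^2$ and the forms $\omega=\rd\tith^{\mathbf a}$ described.

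Finally, for the ``in particular'' clause I would specialize to $c_Z=c_Y=0$, so that $\mathbf a(x)=f'(x)X\in\fa=\bbR X$. The two inequalities above become $f''(x)>0$ and $f''(x)f'(x)/(\cosh x\sinh x)>0$; since $\cosh x\sinh x>0$ on $\bbR^+$, these together force $f'(x)>0$, i.e.\ $\mathbf a(W^+)\subset W^+$, while $f''(x)>0$ is the positive-definiteness of the $1\times1$ Hessian $\bigl(\partial^2 f/\partial x^2\bigr)$. Hence Corollary~\ref{co.5.4} applies and identifies $(g,x)\mapsto 2f(x)$ as a potential function of $(\omega,J^K_c)$. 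I expect no genuine obstacle; the only points demanding a little care are the bookkeeping showing that $\Sigma_H=\Sigma$ and $\fz(\fh)=0$ annihilate all the ``extra'' terms and conditions of Theorem~\ref{th.5.1}, and the observation that the two K\"ahler inequalities with $c_Z=c_Y=0$ already encode the condition $\mathbf a(W^+)\subset W^+$ required to invoke Corollary~\ref{co.5.4}.
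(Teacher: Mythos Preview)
Your proposal is correct and follows exactly the approach the paper intends: the paper simply states that the corollary is implied by the proof of Theorem~\ref{th.6.1} together with Corollary~\ref{co.5.4}, and your write-up spells out precisely this implication --- reading off the structural data $\fh=0$, $\fm^+_{\tw}=0$, $\Sigma_H=\Sigma$, re-using the verifications~\eqref{eq.6.4}--\eqref{eq.6.5} for condition~$(1)$, and identifying the positive-definiteness of the $2\times2$ matrix~\eqref{eq.6.6} with the two stated inequalities, then invoking Corollary~\ref{co.5.4} for the potential-function clause.
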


Finally, we relate the metrics ${\mathbf g}(C,c_{Z},C_{1})$
of Theorem \ref{th.6.1} with the Eguchi-Hanson and Stenzel
metrics. We will show that our metrics coincide with the
well-known (hyper-K\"ahler) Eguchi-Hanson metrics if $C_ {1} = 0$
and $c_{Z} =0$. To prove it, let us rewrite the metrics
${\mathbf g}(C,c_{Z},C_{1})$ in terms of the left $G$-invariant
forms $\theta^{X}$, $\theta^{Y}$ and $\theta^{Z}$ on the Lie
algebra $G = \mathrm{SO}(3)$. Indeed, taking into account the commutation
relations~\eqref{eq.6.1}
for any $\xi_1,\xi_2\in\fg=T_eG$, we have
\begin{equation*}
[\xi_1,\xi_2]=
-\bigl(\theta^Y_e\land \theta^Z_e\bigr)(\xi_1,\xi_2)\cdot X
+\bigl(\theta^X_e\land \theta^Z_e\bigr)(\xi_1,\xi_2)\cdot Y
-\bigl(\theta^X_e\land \theta^Y_e\bigr)(\xi_1,\xi_2)\cdot Z.
\end{equation*}
Taking into account expression~\eqref{eq.6.9}
for the K\"ahler form $\omega=\tio$,
we obtain that
\begin{equation*}\label{eq.6}
\omega
=f'(x) \theta^Y\land \theta^Z
+\tfrac{c_Z}{\cosh x} \theta^X\land \theta^Y
+ f''(x) \rd x\land \theta^X
-\tfrac{c_Z\sinh x}{\cosh^2 x} \rd x\land \theta^Z.
\end{equation*}

But $ {\mathbf g} (\cdot, \cdot)=\omega(J^K_c\cdot, \cdot)$
and therefore by~\eqref{eq.6.18},
\begin{equation*}
\begin{split}
\rd s^2 & = f''(x) (\rd x^2) + f'(x) \tfrac{\sinh x}{\cosh x} (\theta^Z)^2
+ f'(x) \tfrac{\cosh x}{\sinh x} (\theta^Y)^2  + f''(x) (\theta^X)^2 \\
\noalign{\smallskip}
& \quad - c_Z\Big( \tfrac{\sinh x}{\cosh^2 x}(\theta^X\theta^Z +
\theta^Z\theta^X)
+ \tfrac{1}{\cosh x}(\rd x \,\theta^Y + \theta^Y \rd x)\Big),
\end{split}
\end{equation*}
where the functions $f'(x)$ and
$f''(x)$ are described by expressions~\eqref{eq.6.20}.
Putting $c_Z=0$ (then $f'(x)=\sqrt C \sinh x$) we obtain
the ``diagonal'' Stenzel metric
\begin{equation*}
(1/\sqrt C)\rd s^2
=\cosh x (\rd x)^2
+\sinh x\tanh x (\theta^Z)^2
+\cosh x (\theta^Y)^2
+\cosh x(\theta^X)^2,
\end{equation*}
which for $C=1$ after the change of variable $\cosh x=(t/\ell)^2$ becomes
the Eguchi-Hanson metric with parameter $\ell$
(see Gibbons and Pope \cite[(4.17)]{GP}).

\end{document}